\DeclareMathOperator{\tr}{tr}
\DeclareMathOperator{\Argmin}{Argmin}
\DeclareMathOperator{\Argmax}{Argmax}
\DeclareMathOperator{\argmin}{argmin}
\DeclareMathOperator{\argmax}{argmax}
\DeclareMathOperator{\dual}{dual}
\DeclareMathOperator{\cone}{cone}
\DeclareMathOperator{\val}{val}
\DeclareMathOperator{\sol}{sol}
\DeclareMathOperator{\Prob}{Prob}
\DeclareMathOperator{\spa}{span}
\DeclareMathOperator{\Skew}{Skew}
\DeclareMathOperator{\ndet}{ndet}
\DeclareMathOperator{\inter}{int}
\DeclareMathOperator{\relint}{relint}
\DeclareMathOperator{\Her}{Her}
\DeclareMathOperator{\Gr}{Gr}
\DeclareMathOperator{\diag}{diag}
\DeclareMathOperator{\rvol}{rvol}
\DeclareMathOperator{\vol}{vol}
\DeclareMathOperator{\im}{im}
\DeclareMathOperator{\patt}{patt}
\DeclareMathOperator{\rk}{rk}
\DeclareMathOperator{\Sol}{Sol}
\DeclareMathOperator{\SDP}{SDP}
\DeclareMathOperator{\hSDP}{hSDP}
\DeclareMathOperator{\CP}{CP}
\DeclareMathOperator{\hCP}{hCP}
\DeclareMathOperator{\codim}{codim}
\DeclareMathOperator{\afftmp}{aff}
\newcommand{\aff}{{\afftmp}}
\newcommand{\pa}{\patt}
\newcommand{\be}{\beta}
\newcommand{\de}{\delta}
\newcommand{\seq}{\succeq}
\newcommand{\IE}{\mathbb{E}}
\newcommand{\IF}{\mathbb{F}}
\newcommand{\IN}{\mathbb{N}}
\newcommand{\IR}{\mathbb{R}}
\newcommand{\IC}{\mathbb{C}}
\newcommand{\IH}{\mathbb{H}}
\newcommand{\IZ}{\mathbb{Z}}
\newcommand{\bi}{\mathbf{i}}
\newcommand{\bj}{\mathbf{j}}
\newcommand{\bk}{\mathbf{k}}
\newcommand{\mB}{\mathscr{B}}
\newcommand{\hmB}{\hat{\mathscr{B}}}
\newcommand{\mC}{\mathscr{C}}
\newcommand{\E}{\mathcal{E}}
\newcommand{\mO}{\mathcal{O}}
\newcommand{\mI}{\mathcal{I}}
\newcommand{\Herbn}{\Her_{\be,n}}
\newcommand{\vp}{\varphi}
\newcommand{\B}{\mathcal{B}}
\newcommand{\C}{\mathcal{C}}
\newcommand{\F}{\mathcal{F}}
\newcommand{\M}{\mathcal{M}}
\newcommand{\N}{\mathcal{N}}
\newcommand{\W}{\mathcal{W}}
\newcommand{\mcL}{\mathcal{L}}
\newcommand{\T}{\mathcal{T}}
\newcommand{\HG}{{ }_2F_1}
\newcommand{\Cbn}{\C_{\be,n}}
\newcommand{\Wbnr}{\W_{\be,n,r}}
\newcommand{\GbE}{\text{G}\beta\text{E}}
\newcommand{\vw}{
\begin{tikzpicture}[scale=0.35, >=stealth]\draw[->] (0,0) -- (45:1) node[right]{$w$}; \draw[->] (0,0) -- (135:1) node[left]{$v$};\end{tikzpicture}
}
\newcommand{\wv}{
\begin{tikzpicture}[scale=0.35, >=stealth]\draw[->] (0,0) -- (45:1) node[right]{$v$}; \draw[->] (0,0) -- (135:1) node[left]{$w$};\end{tikzpicture}
}
\newcommand{\ol}[1]{\overline{#1}}
\newcommand{\wh}[1]{\tilde{#1}}
\newtheorem{thm}{Theorem}[section]
\newtheorem{corollary}[thm]{Corollary}
\newtheorem{lemma}[thm]{Lemma}
\newtheorem{proposition}[thm]{Proposition}
\theoremstyle{definition}
\newtheorem{definition}[thm]{Definition}
\newtheorem{remark}[thm]{Remark}
\begin{document}

\title{Intrinsic volumes of symmetric cones}
\author{Dennis Amelunxen$^\ast$ and Peter B\"urgisser
\thanks{Institute of Mathematics, University of Paderborn, Germany. 
        Partially supported by DFG grants BU \mbox{1371/2-2} and AM \mbox{386/1-1}.}
     \\ University of Paderborn
     \\ \{damelunx,pbuerg\}@math.upb.de
        }
\date{\today}
\maketitle

\begin{abstract}
We compute the intrinsic volumes of the cone of positive semidefinite matrices over the real numbers, over the complex numbers, and over the quaternions, in terms of integrals related to Mehta's integral. Several applications for the probabilistic analysis of semidefinite programming are given.
\end{abstract}

\smallskip

\noindent{\bf AMS subject classifications:} 15B48, 52A55, 53C65, 60D05, 90C22

\smallskip

\noindent{\bf Key words:} intrinsic volumes, symmetric cones, Mehta's integral, semidefinite programming

%



\section{Introduction}

The classification of symmetric cones, also known as self-scaled cones, i.e., closed convex cones, which are self-dual, and whose automorphism group acts transitively on the interior, is a well-known result. It says that every symmetric cone is a direct product of the following basic families of symmetric cones:
\begin{itemize}
  \item the Lorentz cones, which have the form $\{x\in\IR^n\mid x_n\geq (x_1^2+\ldots+x_{n-1}^2)^{1/2}\}$,
  \item the cones of positive semidefinite matrices over the real numbers, the complex numbers, or the quaternions,
  \item the single (exceptional, $27$-dimensional) cone of $3\times 3$ positive semidefinite matrices over the octonions.
\end{itemize}
This result follows from the theory of Jordan algebras, which is intimately related to the theory of symmetric cones, cf.~\cite{FK:94}.

On the other hand, self-scaled cones form the basis of interior-point methods in convex optimization. This has been observed in the mid~'90s, cf.~\cite{neto:97,neto:98,Gue:96,Fayb:97}, cf.~also the book~\cite{Ren:01} and the survey article~\cite{HG:02}. A detailed understanding of these cones, in particular of its statistical properties, is thus of fundamental importance. By `statistical properties' we mean basic probabilities like the probability that a random convex program is feasible, etc. In the case of linear programming (LP) these fundamental statistical properties are well understood. For example, the probability that a random LP is feasible has been computed by Wendel in~1962~\cite{wend:62}, and further elementary probabilities can be deduced from this, cf.~\cite[Thm.~4]{chcu:04}. In fact, the probability that a random convex program is feasible can be expressed in terms of the \emph{intrinsic volumes} of the reference cone. The intrinsic volumes of a closed convex cone form a discrete probability distribution that to some extent captures the statistics of the cone.
In the case of linear programming the reference cone is the positive orthant, and its intrinsic volumes are given by the symmetric binomial distribution, cf.~Section~\ref{sec:intrvol-polyhedr}. As for second-order cone programming, the reference cone is the Lorentz cone, whose intrinsic volumes are also, basically, given by the symmetric binomial distribution (see Remark~\ref{rem:completeness} for the details).


We give in this paper, apparently for the first time, an explicit formula for the intrinsic volumes of the cone of positive semidefinite matrices over the real numbers, over the complex numbers, and over the quaternions. The resulting formulas involve integrals that are related to Mehta's integral. It remains to give closed formulas and/or derive asymptotics for these integrals, but we see this work as the first step for understanding the intrinsic volumes of the symmetric cones, and thus for understanding fundamental statistical properties of semidefinite programming.
To illustrate the significance of the intrinsic volumes (and of its local versions, the~\emph{curvature measures}), we will give several corollaries, which describe some interesting probabilities about semidefinite programming (SDP) in terms of these integrals. In particular, we obtain a closed formula for the probability that the solution of a random SDP has a certain rank. To the best of our knowledge, this is so far the first result making concrete statements about the above-mentioned probability, a question which is by now at least 15 years old, cf.~\cite{AHO:97}.

Another interesting aspect, which deserves further investigation, is the observation that there seems to be a connection between the curvature measures of the cone of positive semidefinite matrices and the \emph{algebraic degree of semidefinite programming}. This degree has been defined in~\cite{NRS:09}, cf.~also~\cite{BR:09}, and some remarkable parallels to the curvature measures can be established, cf.~Remark~\ref{rem:connect-algdeg}. In fact, the authors also describe in~\cite[Sec.~3]{NRS:09} an experiment to empirically analyze the rank of the solution of a random semidefinite program. But their (experimental) results are not comparable to our 
formulas, as they choose a different distribution to specify a random semidefinite program.

The organization of the paper is as follows. In Section~\ref{sec:appl-CP} we describe the connection between the specific curvature measures, for which we will give explicit formulas in Section~\ref{sec:form-Phi_j(be,n,r)}, and some fundamental statistics in semidefinite programming. Although we defer the formal definition of the curvature measures and the intrinsic volumes to Section~\ref{sec:spher-intr-vol}, we give the SDP application at this early stage to motivate the subsequent sometimes technical sections. Section~\ref{sec:appl-kin-form} is devoted to the applications of the kinematic formula in the context of convex programming. This section is independent from the rest of the paper, but it shows the usefulness of the kinematic formula and further motivates the computation of the intrinsic volumes of the cone of semidefinite matrices. In Section~\ref{sec:main_res} we give the main result of this paper, which are the formulas for the intrinsic volumes/curvature measures of the cone of positive semidefinite matrices. Section~\ref{sec:proof-main-res} is devoted to the proof of the main result.


Since we need in Section~\ref{sec:appl-kin-form} a specific form of the kinematic formula, which is not easy to trace in the literature, we will describe in the appendix the additional concept of \emph{support measures}. The most general form of the kinematic formula in the spherical setting is stated in terms of these measures. We will describe how the specific kinematic formula that we use can be derived from this general result.

\subsection{Applications in convex programming}\label{sec:appl-CP}

We consider the following forms of convex programming. Let $\E$ be a finite-dimensional Euclidean space with inner product $\langle .,.\rangle\colon\E\times\E\to\IR$. Furthermore, let $C\subseteq\E$ be a closed convex cone, i.e., a closed set that satisfies $0\in C$ and $\lambda x+\mu y\in C$ for all $x,y\in C$ and $\lambda,\mu\geq0$. The classical convex programming problem (with \emph{reference cone}~$C$) has the inputs $a_1,\ldots,a_m,z\in \E$ and $b_1,\ldots,b_m\in\IR$, and consists of the task
\begin{align}\label{eq:CP}\tag{$\CP$}
   \text{maximize }\; & \langle z, x\rangle
\\ \text{subject to }\; & \langle a_i, x\rangle =b_i \,,\; i=1,\ldots,m, \nonumber
\\ & x\in C , \nonumber
\end{align}
which is to be solved in~$x\in\E$. We also define a homogeneous version, which is easier to analyze. This has only the inputs $a_1,\ldots,a_m,z\in\E$, and is again to be solved in~$x\in\E$
\begin{align}\label{eq:hCP}\tag{$\hCP$}
   \text{maximize }\; & \langle z, x\rangle
\\ \text{subject to }\; & \langle a_i, x\rangle =0 \,,\; i=1,\ldots,m, \nonumber
\\ & x\in C \,,\; \|x\|\leq 1 . \nonumber
\end{align}

The space~$\E$ is endowed with the \emph{(standard) normal distribution}~$\N(\E)$. Choosing an orthonormal basis of~$\E$ so that we have an isometry $\vp\colon\E\to\IR^d$, $d=\dim\E$, a random element $x\in\E$ is normal distributed iff the components of $\vp(x)$ are i.i.d.~standard normal, i.e., in~$\N(0,1)$. We say that an instance of~\eqref{eq:CP} is a \emph{(normal) random program} if the inputs $a_1,\ldots,a_m,z$ are i.i.d.~in $\N(\E)$ and the inputs $b_1,\ldots,b_m$ are i.i.d.~in $\N(0,1)$. Analogously, we speak of a random instance of~\eqref{eq:hCP} if the inputs $a_1,\ldots,a_m,z$ are i.i.d.~in $\N(\E)$.

For the analysis of the problems~\eqref{eq:CP} and~\eqref{eq:hCP} we use the following notation: We denote the \emph{feasible set} of~\eqref{eq:CP} and~\eqref{eq:hCP} by
\begin{align*}
   \F(\CP) & := \{ x\in C \mid \forall i: \langle a_i, x\rangle=b_i \} , & \F(\hCP) & := \{ x\in C \mid \forall i: \langle a_i, x\rangle =0 , \|x\|\leq 1\} .
\intertext{By $\val$ we denote the \emph{value} of~\eqref{eq:CP} or~\eqref{eq:hCP},}
   \val(\CP) & := \sup\{ \langle z, x\rangle \mid x\in\F(\CP) \} , & \val(\hCP) & := \sup\{ \langle z, x\rangle \mid x\in\F(\hCP) \} .
\intertext{Finally, we denote by $\Sol$ the \emph{solution set} of~\eqref{eq:CP} or~\eqref{eq:hCP},}
   \Sol(\CP) & := \{ x\in\F(\CP) \mid \langle z, x\rangle = \val(\CP)\} , & \Sol(\hCP) & := \{ x\in\F(\hCP) \mid \langle z, x\rangle = \val(\hCP)\} .
\end{align*}
Note that $\val(\hCP)$ is in fact a maximum, as the set $\F(\hCP)$ is always compact and contains the origin. In general, this is not the case for the affine version~\eqref{eq:CP}. The feasible set $\F(\CP)$ may be unbounded, and the value $\val(\CP)$ may be~$\infty$ in which case we say that~\eqref{eq:CP} is \emph{unbounded}. Also, the feasible set $\F(\CP)$ may be empty, so that $\val(\CP)=\sup \emptyset$ which is~$-\infty$ by definition. In this case we say that~\eqref{eq:CP} is \emph{infeasible}. If the solution set~$\Sol(\CP)$ only consists of a single element, then we denote this by~$\sol(\CP)$. So writing $x_0=\sol(\CP)$ means that $\Sol(\CP)=\{x_0\}$. We use a similar convention for~$\Sol(\hCP)$. Well-known results from convex geometry, see for example~\cite[Thm.~2.2.9]{Schn:book}, imply that almost surely $\Sol(\hCP)$ and $\Sol(\CP)$ are either empty or consist of single elements.

For random instances of~\eqref{eq:CP} and~\eqref{eq:hCP} we can express certain statistics in terms of the \emph{intrinsic volumes} and the \emph{curvature measures} of the reference cone. The intrinsic volumes of the closed convex cone~$C\subseteq\E$ are nonnegative numbers $V_0(C),\ldots,V_d(C)$, $d=\dim\E$, which add up to one, $V_0(C)+\ldots+V_d(C)=1$. To describe the curvature measures, we make the following definition (cf.~Section~\ref{sec:intrvol-polyhedr} for a discussion of this concept).

\begin{definition}
Let $\E$ be a finite-dimensional euclidean space, and let 
$\mB(\E)$ denote the Borel $\sigma$-algebra on $\E$. Then we call
\begin{equation}\label{eq:def-hat(B)(E)}
  \hmB(\E) := \{ M\in\mB(\E)\mid \forall \lambda>0: \lambda M=M\}
\end{equation}
the \emph{conic (Borel) $\sigma$-algebra on $\E$}.
\end{definition}

It is easily seen that $\hmB(\E)$ indeed satisfies the axioms of a $\sigma$-algebra. The curvature measures of~$C$ are measures $\Phi_0(C,.),\ldots,\Phi_d(C,.)\colon \hmB(\E)\to\IR_+$, which satisfy $\Phi_j(C,\E)=V_j(C)$. So the curvature measures localize the intrinsic volumes. We give an illustrative characterization of the intrinsic volumes and the curvature measures of polyhedral cones in Section~\ref{sec:intrvol-polyhedr}, and we shall provide the definition for the general case in Section~\ref{sec:spher-intr-vol}. In Section~\ref{sec:supp-meas} in the appendix we will also explain the \emph{support measures}, which, among other things, will further justify our definition of $\hmB(\E)$.

The following two theorems describe the statistics of~\eqref{eq:hCP} and~\eqref{eq:CP}.

\begin{thm}\label{thm:hCP}
We have the following probabilities for random instances of~\eqref{eq:hCP}
\begin{equation}\label{eq:hCP-probs1}
  \Prob\big[\sol(\hCP)=0\big] = \sum_{j=0}^m V_j(C) \;,\quad \Prob\big[\sol(\hCP)\in M\big] = \sum_{j=m+1}^d \Phi_j(C,M) \; ,
\end{equation}
where $M\in\hmB(\E)$ with $0\not\in M$. Furthermore, if $C$ is not a linear subspace, then
\begin{equation}\label{eq:hCP-probs2}
  \Prob\big[\F(\hCP)=\{0\}\big] = 2\cdot \hspace{-2mm}\sum_{\substack{j=0\\j\equiv m-1 \bmod2}}^{m-1} V_j(C) \; .
\end{equation}
\end{thm}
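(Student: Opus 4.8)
The plan is to translate both statements into facts about the intrinsic volumes and curvature measures of the random cone $C\cap L$, where $L:=\spa(a_1,\dots,a_m)^{\perp}$, and then to feed these into a Crofton-type identity. First I would note that almost surely $a_1,\dots,a_m$ are linearly independent, so $L$ is a uniformly random linear subspace of $\E$ of dimension $d-m$, independent of the standard Gaussian vector $z$ (assume $m\le d$; if $m>d$ then $L=\{0\}$ a.s.\ and all three claims are immediate). Since $\F(\hCP)=C\cap L\cap\{x\mid\|x\|\le1\}$, we have $\F(\hCP)=\{0\}$ if and only if $C\cap L=\{0\}$. For the solution I would use the Moreau decomposition $z=\Pi_{C\cap L}(z)+\Pi_{(C\cap L)^{\circ}}(z)$ of $z$ relative to the closed convex cone $C\cap L$ and its polar cone $(C\cap L)^{\circ}$: from $\langle z,x\rangle=\langle\Pi_{C\cap L}(z),x\rangle+\langle\Pi_{(C\cap L)^{\circ}}(z),x\rangle\le\langle\Pi_{C\cap L}(z),x\rangle$ for $x\in C\cap L$ one reads off that maximizing $\langle z,x\rangle$ over $(C\cap L)\cap\{\|x\|\le1\}$ gives $\sol(\hCP)=0$ exactly when $\Pi_{C\cap L}(z)=0$ (equivalently $z\in(C\cap L)^{\circ}$), and $\sol(\hCP)=\Pi_{C\cap L}(z)/\|\Pi_{C\cap L}(z)\|$ otherwise; recall that $\Sol(\hCP)$ is a.s.\ a single point, so this covers the generic situation.

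The next step is to condition on $L$. Because $C\cap L\subseteq L$, we have $\Pi_{C\cap L}(z)=\Pi_{C\cap L}(\Pi_L(z))$, and conditionally on $L$ the vector $\Pi_L(z)$ is a standard Gaussian on $L$. Hence, invoking the description of $V_0$ and of the curvature measures $\Phi_j$ as, respectively, the mass at the origin and the landing distribution of the metric projection of a standard Gaussian --- which is how they will be introduced in Section~\ref{sec:spher-intr-vol} --- and using the scale invariance of $M$ (so that $\{\sol(\hCP)\in M\}=\{\Pi_{C\cap L}(z)\in M\}$ for $0\notin M$), I obtain for $M\in\hmB(\E)$ with $0\notin M$
\[ \Prob\big[\sol(\hCP)=0\mid L\big]=V_0(C\cap L),\qquad \Prob\big[\sol(\hCP)\in M\mid L\big]=\sum_{j\ge1}\Phi_j(C\cap L,M), \]
the term $j=0$ being absent in the second identity since $\Phi_0(C\cap L,M)=0$ when $0\notin M$. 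Taking expectations over $L$ reduces \eqref{eq:hCP-probs1} to the two identities
\[ \IE_L\big[V_0(C\cap L)\big]=\sum_{k=0}^{m}V_k(C),\qquad \IE_L\big[\Phi_j(C\cap L,M)\big]=\Phi_{j+m}(C,M)\quad(1\le j\le d-m), \]
since summing the second over $j\ge1$ yields $\Prob[\sol(\hCP)\in M]=\sum_{j=m+1}^{d}\Phi_j(C,M)$.

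These last two identities are the real content: they form the localized \emph{conic Crofton formula}, i.e.\ the conic kinematic formula with one of the two bodies replaced by a linear subspace of dimension $d-m$. I would obtain it from the kinematic machinery developed for this paper in Section~\ref{sec:appl-kin-form} and the appendix. Two points require care, and together they are the main obstacle. First, the normalization: with the probabilistic normalization of the $V_j$ and $\Phi_j$ there should be no extra binomial factors. Second, the behaviour at $j=0$: the Crofton formula ``wants'' to send $V_j(C\cap L)$ to $V_{j+m}(C)$ for every $j\ge0$, but the mass it would assign to the non-existent indices $-1,\dots,-m$ instead accumulates at $j=0$, which is exactly the Gauss--Bonnet correction term $V_0(C)+\dots+V_{m-1}(C)$ appearing in $\IE_L[V_0(C\cap L)]=\sum_{k=0}^{m}V_k(C)$. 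A consistency check is $\IE_L[\sum_j V_j(C\cap L)]=1=\sum_k V_k(C)$, and one can verify the full identity directly for $C=\IR^2_{\ge0}$ and $C=\IR^3_{\ge0}$.

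Finally, for \eqref{eq:hCP-probs2} I would argue separately, since $\{\F(\hCP)=\{0\}\}$ is strictly contained in $\{\sol(\hCP)=0\}$. As observed above this event is the purely Grassmannian $\{C\cap L=\{0\}\}$, and I would evaluate its complementary probability by the conic kinematic formula for a non-trivial intersection, which for $C$ not a linear subspace and $L$ of codimension $m$ reads
\[ \Prob\big[C\cap L\ne\{0\}\big]=2\sum_{i\ge1}V_{m+2i-1}(C)=2\,h_{m+1}(C),\qquad h_k(C):=\sum_{i\ge0}V_{k+2i}(C), \]
the parity coupling being a manifestation of ``conic Gauss--Bonnet''. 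Then I would use that a cone which is not a linear subspace satisfies $\sum_{j\ \text{even}}V_j(C)=\sum_{j\ \text{odd}}V_j(C)=\tfrac12$, so the intrinsic volumes in the residue class of $m-1$ modulo $2$ sum to $\tfrac12$; splitting that class at the index $m$ gives $\sum_{0\le j\le m-1,\ j\equiv m-1\,(2)}V_j(C)=\tfrac12-h_{m+1}(C)$, whence
\[ \Prob\big[\F(\hCP)=\{0\}\big]=1-2h_{m+1}(C)=2\!\!\sum_{\substack{j=0\\ j\equiv m-1\bmod 2}}^{m-1}\!\!V_j(C), \]
which is \eqref{eq:hCP-probs2}. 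The assumption that $C$ is not a subspace is used both in this kinematic formula and in the parity symmetry; for a linear subspace the stated formula indeed fails, consistent with the hypothesis.
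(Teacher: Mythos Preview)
Your proposal is correct and follows essentially the same route as the paper: you reduce to the random cone $C\cap L$, identify $\sol(\hCP)$ with the (normalized) metric projection of a Gaussian onto this cone via the Moreau decomposition (the paper packages this same fact as a separate lemma about $\argmax$ over $C\cap B$), condition on $L$ to obtain $V_0(C\cap L)$ and $\sum_j\Phi_j(C\cap L,M)$, and then invoke the kinematic/Crofton identities $\IE_L[V_0(C\cap L)]=\sum_{k\le m}V_k(C)$ and $\IE_L[\Phi_j(C\cap L,M)]=\Phi_{j+m}(C,M)$, exactly as the paper does with its Theorem~\ref{thm:kinem-form}. Your treatment of \eqref{eq:hCP-probs2} via the intersection probability formula and the Gauss--Bonnet parity identity is likewise the same as the paper's, which isolates that step as Corollary~\ref{cor:prob-CcapW=0}.
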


\begin{thm}\label{thm:CP}
We have the following probabilities for random instances of~\eqref{eq:CP}
\begin{equation}\label{eq:CP-probs1}
   \Prob\big[\CP \text{ infeasible}\big] = \sum_{j=0}^{m-1} V_j(C) \;,\quad
   \Prob\big[\CP \text{ unbounded}\big] = \sum_{j=m+1}^d V_j(C) \; .
\end{equation}
Furthermore, for $M\in\hmB(\E)$ we have
\begin{equation}\label{eq:CP-probs2}
   \Prob\big[\sol(\CP)\in M\big]  = \Phi_m(C,M) \; ,
\end{equation}
and $\Prob\big[\sol(\CP)\in M \wedge \val(\CP)>0\big] = \Prob\big[\sol(\CP)\in M \wedge \val(\CP)<0\big]$.
\end{thm}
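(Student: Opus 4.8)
The plan is to reduce Theorem~\ref{thm:CP} to Theorem~\ref{thm:hCP} by \emph{homogenisation}. Put $\hat\E:=\E\times\IR$ with the product inner product, coordinates written $(x,t)$, and let $\hat C:=\overline{\cone(C\times\{1\})}$; since $C$ is a cone one checks $\hat C=C\times\IR_{\geq0}$, which is never a linear subspace, so Theorem~\ref{thm:hCP} applies to $\hat C$. For an instance of~\eqref{eq:CP} with data $a_1,\dots,a_m,z\in\E$, $b_1,\dots,b_m\in\IR$, set $\hat a_i:=(a_i,-b_i)\in\hat\E$ (i.i.d.\ standard Gaussian), $\hat W:=\bigcap_i\hat a_i^\perp$ (uniformly random of dimension $\dim\E+1-m$), $W:=\bigcap_i a_i^\perp$, $D:=\hat C\cap\hat W$, and $\hat z:=(z,0)$. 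The map $x\mapsto(x,1)$ identifies $\F(\CP)$ with the affine slice $D\cap\{t=1\}$: so~\eqref{eq:CP} is feasible iff $D$ has a point with $t>0$, the recession cone of $\F(\CP)$ is $D\cap\{t=0\}=(C\cap W)\times\{0\}$, and when feasible $\sol(\CP)$ and $\val(\CP)$ arise from maximising $\langle\hat z,\cdot\rangle$ over that slice.

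For the first identity in~\eqref{eq:CP-probs1} I would use the almost-sure fact (standard; see Section~\ref{sec:spher-intr-vol}) that a uniformly random linear subspace meeting a closed convex cone in more than the origin a.s.\ also meets its relative interior; applied to $\hat C$ and $\hat W$ this gives, a.s., that $D\neq\{0\}$ forces $D$ to meet $(\relint C)\times\IR_{>0}$, hence to have a point with $t>0$. Therefore a.s.\ $\{\CP\text{ infeasible}\}=\{D=\{0\}\}$, and since $D=\{0\}$ iff $\F(\hCP)=\{0\}$ for the homogeneous program over $\hat C$ with constraints $\hat a_i$, Theorem~\ref{thm:hCP}/\eqref{eq:hCP-probs2} gives $\Prob[\CP\text{ infeasible}]=2\sum_{j\equiv m-1\,(2)}V_j(\hat C)$. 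Plugging in the product-cone formula $V_j(\hat C)=V_j(C)V_0(\IR_{\geq0})+V_{j-1}(C)V_1(\IR_{\geq0})=\tfrac12V_j(C)+\tfrac12V_{j-1}(C)$ and noting that as $j$ runs through $\{m-1,m-3,\dots\}\cap\IZ_{\geq0}$ the pairs $\{j,j-1\}$ tile $\{0,\dots,m-1\}$, the sum collapses to $\sum_{j=0}^{m-1}V_j(C)$. Granting~\eqref{eq:CP-probs2}, the second identity follows from $\Prob[\CP\text{ unbounded}]=\Prob[\CP\text{ feasible}]-\Prob[\Sol(\CP)\neq\emptyset]=\bigl(1-\sum_{j<m}V_j(C)\bigr)-\Phi_m(C,\E)$, using $\sum_jV_j(C)=1$, $\Phi_m(C,\E)=V_m(C)$, and the standard fact that a.s.\ a feasible instance of~\eqref{eq:CP} has a solution iff it is bounded.

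For~\eqref{eq:CP-probs2} I would run the localised version of the homogenisation together with the final assertion of the theorem. On the event ``$\CP$ feasible, bounded, and $\val(\CP)>0$'', the solution $x^*$ has $\langle\hat z,(x^*,1)\rangle>0$; since boundedness forces $\langle z,\cdot\rangle\le0$ on the recession cone $(C\cap W)\times\{0\}$, the maximiser of $\langle\hat z,\cdot\rangle$ over $D$ on the unit sphere cannot sit at $t=0$, hence lies on the ray through $(x^*,1)$, i.e.\ in $\hat M\cap\{t>0\}$ with $\hat M:=\cone(M\times\{1\})\in\hmB(\hat\E)$. By Theorem~\ref{thm:hCP}/\eqref{eq:hCP-probs1} the probability that this homogeneous maximiser lies in $\hat M\cap\{t>0\}$ equals $\sum_{j=m+1}^{\dim\E+1}\Phi_j(\hat C,\hat M\cap\{t>0\})$. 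Two further points: (i) an \emph{unbounded} instance of~\eqref{eq:CP} can also have its homogeneous maximiser at $t>0$, so this contribution must be identified and subtracted --- I expect via the LP dual (which is feasible exactly when~\eqref{eq:CP} is bounded) or a direct conditioning argument; and (ii) unwinding the curvature measures of the product cone $\hat C=C\times\IR_{\geq0}$ on the non-product set $\hat M\cap\{t>0\}$ should identify the remaining quantity with $\tfrac12\Phi_m(C,M)$. Combined with the final assertion $\Prob[\sol(\CP)\in M,\val(\CP)>0]=\Prob[\sol(\CP)\in M,\val(\CP)<0]$ and $\Prob[\val(\CP)=0]=0$, this yields $\Prob[\sol(\CP)\in M]=2\Prob[\sol(\CP)\in M,\val(\CP)>0]=\Phi_m(C,M)$.

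It remains to prove the sign symmetry. Here $\val(\CP)=\max_{x\in P}\langle z,x\rangle=h_P(z)$ where $P:=C\cap(x_0+W)$, $x_0:=A^+b$, depends only on the $a_i,b_i$ (and is independent of $z$), while $\sol(\CP)$ depends on $z$ only through $\Pi_W z$. A reflection of $z$ alone does not flip $\operatorname{sgn}\val(\CP)$ --- it turns the maximiser into a minimiser, and moreover $\val(\CP)$ splits into a part linear in $z$ (through $x_0$) and a part depending on $z$ nonlinearly (through $\sol(\CP)$) --- so I would instead build a measure-preserving transformation of the data acting \emph{jointly} on $z$ and $x_0$, using that, conditionally on $W$, $x_0$ is rotationally symmetric in $W^\perp$ and independent of $z$, which preserves $\{\sol(\CP)\in M\}$ and reverses $\operatorname{sgn}\val(\CP)$. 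Pinning down this transformation is the main obstacle; the secondary hurdles are the almost-sure arguments at the hyperplane $\{t=0\}$ and the removal of unbounded instances in step~(i). (An alternative to homogenisation for~\eqref{eq:CP-probs2} would be to apply the affine Crofton-type consequence of the kinematic formula directly, which expresses the law of the face of $C$ met by the random affine subspace in terms of $\Phi_m(C,\cdot)$.)
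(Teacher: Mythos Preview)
Your homogenisation argument for the first identity in~\eqref{eq:CP-probs1} is correct and rather elegant: the identification of ``$\CP$ infeasible'' with $\hat C\cap\hat W=\{0\}$ (a.s.), the application of~\eqref{eq:hCP-probs2} to $\hat C=C\times\IR_{\ge0}$, and the telescoping via the product formula $V_j(\hat C)=\tfrac12(V_j(C)+V_{j-1}(C))$ all go through. This is a genuinely different route from the paper, which instead reduces everything to a two--dimensional picture (Lemmas~\ref{lem:red-2dim} and~\ref{lem:aff-d=2}) and then applies the kinematic formula twice.

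However, your approach to~\eqref{eq:CP-probs2} contains a real error, not just a gap. The assertion ``the maximiser of $\langle\hat z,\cdot\rangle$ over $D$ on the unit sphere \dots\ lies on the ray through $(x^*,1)$'' is false. Restricted to $\{t>0\}$, maximising $\langle\hat z,\cdot\rangle$ over $D\cap S(\hat\E)$ amounts to maximising $\langle z,y\rangle/\sqrt{\|y\|^2+1}$ over $y\in\F(\CP)$, which in general has a different argmax than $\langle z,y\rangle$. Concretely, take $\E=\IR^2$, $C=\{(x,y):0\le x\le 100y\}$, $a_1=(0,1)$, $b_1=1$, $z=(1,50)$: then $\F(\CP)=\{(x,1):0\le x\le100\}$, the affine maximum is at $x^*=(100,1)$ (on the boundary ray of~$C$), but the spherical maximiser of $\hat z=(1,50,0)$ over $D\cap S$ sits in the \emph{relative interior} of~$D$ (at the direction of $(0.04,1,1)$), hence not in $\hat M$ when $M$ is the boundary ray containing~$x^*$. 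So the event ``homogeneous maximiser in $\hat M\cap\{t>0\}$'' does not coincide with ``$\sol(\CP)\in M$'', and the subsequent appeal to~\eqref{eq:hCP-probs1} computes the wrong quantity. (There is also the separate issue that $\hat z=(z,0)$ is not standard Gaussian in~$\hat\E$, so Theorem~\ref{thm:hCP} does not apply as stated.) The paper sidesteps all of this: it passes to the linear span $\tilde W$ of the affine constraint set (codimension $m{-}1$), projects $C\cap\tilde W$ onto the plane $L=\spa\{v,w\}$ spanned by the normalised height direction~$w$ and the projected objective~$v$, and analyses the resulting two--dimensional wedge directly. The sign symmetry then drops out of the elementary $O(2)$--picture in Lemma~\ref{lem:aff-d=2}, with no need for the joint reflection you are searching for. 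Since your derivation of the unboundedness probability relies on~\eqref{eq:CP-probs2}, only the infeasibility part of your proposal stands.
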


\begin{remark}
The intrinsic volumes of the positive orthant~$\IR_+^d$ are given by the symmetric binomial distribution $V_j(\IR_+^d)=\binom{d}{j}/2^d$, cf.~Remark~\ref{rem:intrvol-LP}. Plugging in these values in Theorem~\ref{thm:CP} yields the statistics of linear programming, which are repeatedly computed,
cf.~\cite{AB:81,S:83,chcu:04}.
\end{remark}

We finish this introduction with a discussion of the special case of \emph{semidefinite programming}. For this we need to set up some notation first. Throughout the paper we use the parameter $\be\in\{1,2,4\}$ to indicate if we are working over the real numbers~$\IR$, over the complex numbers~$\IC$, or over the quaternions~$\IH$. In particular, we denote the ground (skew) field by $\IF_\be$, i.e.,
  \[ \IF_1:=\IR \;,\quad \IF_2:=\IC \;,\quad \IF_4:=\IH \; . \]
We consider these (skew) fields with the natural identifications
  \[ \IR\subset\IC=\IR[\bi]\subset\IH=\IC[\bj,\bk]=\IR[\bi,\bj,\bk] \; , \]
where $\bi^2=\bj^2=\bk^2=-1$, and with the well-known quaternion multiplication rule found by Hamilton in~1843. Recall that we have the (linear) conjugation map $\bar{.}\colon\IH\to\IH$, given by $\bar{1}=1$, $\bar{\bi}=-\bi$, $\bar{\bj}=-\bj$, $\bar{\bk}=-\bk$, and its restriction to $\IC$. Furthermore, we call $\Re\colon\IF_\be\to\IR$, $z\mapsto \frac{z+\bar{z}}{2}$ the canonical projection on~$\IR$.

The set of $(n\times n)$-Hermitian matrices over $\IF_\be$ shall be denoted by
  \[ \Herbn := \{ A\in\IF_\be^{n\times n}\mid A^\dagger = A \} \; , \]
where $A^\dagger=(\bar{a}_{ji})$ for $A=(a_{ij})$. This is a real vector space of dimension
\begin{equation}\label{eq:def-d_(b,n)}
  d_{\be,n} := \dim \Herbn = n+\be\cdot \tbinom{n}{2} \; .
\end{equation}
Throughout this paper we regard 
$\Herbn$ as a euclidean vector space with the inner product given by 
$A\bullet B := \Re(\tr(A^\dagger B))$, where 
$A,B\in\Herbn$, and $\tr(A):=a_{11}+\ldots+a_{nn}$ denotes the trace. The normal distribution in~$\Herbn$ with respect to this inner product is called the \emph{Gaussian Orthogonal/Unitary/Symplectic Ensemble} (GOE/GUE/GSE), according to $\be=1,2,4$, respectively. We use the short notation~$\GbE$ for this distribution.

For $A\in\Herbn$ and $x\in\IF_\be^n$ the product $x^\dagger A x$ lies in~$\IR$, and an element $A\in\Herbn$ is called \emph{positive semidefinite} iff $x^\dagger A x\geq0$ for all $x\in\IF_\be^n$. The set of all positive semidefinite elements in $\Herbn$ is a closed convex cone, the \emph{cone of positive semidefinite matrices over~$\IF_\be$}, which we denote by
\begin{equation}\label{eq:def-C_(b,n)}
  \C_{\be,n} = \{ A\in \Herbn\mid \forall x\in \IF_\be^n: x^\dagger A x\geq0 \} \; .
\end{equation}
The cone $\C_{\be,n}$ has a natural decomposition according to the rank of the matrices (cf.~\cite{Z:97} for the quaternion case)
\begin{equation}\label{eq:decomp-C-rank}
  \C_{\be,n} = \bigcup_{r=0}^n \; \W_{\be,n,r} \;,\qquad \W_{\be,n,r} := \{A\in \C_{\be,n}\mid \rk A=r\} \; .
\end{equation}
For the $j$th curvature measure of $\Cbn$ evaluated at the set of its rank~$r$ matrices we write
\begin{equation}\label{eq:def-Phi_j(b,n,r)}
  \Phi_j(\be,n,r) := \Phi_j(\Cbn,\Wbnr) \; .
\end{equation}
The decomposition~\eqref{eq:decomp-C-rank} of the cone $\Cbn$ into the rank~$r$ strata yields the formula
\begin{equation}\label{eq:decomp-V_j(Cbn)}
  V_j(\Cbn) = \sum_{r=0}^n \Phi_j(\be,n,r) \;,\quad j=0,\ldots,d_{\be,n} \; .
\end{equation}

Semidefinite programming is now the following specialization of convex programming
  \[ \E=\Herbn \,,\quad C = \Cbn \,,\quad \langle x, y\rangle \;\hat{=}\; X\bullet Y \,,\quad \N(\E) \;\hat{=}\; \GbE \; , \]
and we obtain the following programs
\\\begin{minipage}{0.45\textwidth}
\begin{align}\label{eq:SDP}\tag{$\SDP_\be$}
   \text{max.~}\; & Z\bullet X
\\ \text{s.t.~}\; & A_i\bullet X=b_i 
\nonumber
\\ & X\seq 0 , \nonumber
\end{align}
\end{minipage}
\begin{minipage}{0.45\textwidth}
\begin{align}\label{eq:hSDP}\tag{$\hSDP_\be$}
   \text{max.~}\; & Z\bullet X
\\ \text{s.t.~}\; & A_i\bullet X=0 
\nonumber
\\ & X\seq 0 \,,\; \|X\|\leq 1 , \nonumber
\end{align}
\end{minipage}
\\[3mm]where $X\seq0$ is the usual notation for $X\in\Cbn$.

Specializing Theorem~\ref{thm:CP} yields the following corollary for semidefinite programming.


\begin{corollary}\label{cor:SDP}
We have the following probabilities for random instances of~\eqref{eq:SDP}
\begin{equation}
   \Prob\big[\SDP_\be \text{ infeasible}\big] = \sum_{j=0}^{m-1} V_j(\Cbn) \;,\quad
   \Prob\big[\SDP_\be \text{ unbounded}\big] = \sum_{j=m+1}^d V_j(\Cbn) \; .
\end{equation}
Furthermore, for $0\leq r\leq n$ we have
\begin{equation}
   \Prob\big[\rk(\sol(\SDP_\be))=r\big]  = \Phi_m(\be,n,r) \; .
\end{equation}
\end{corollary}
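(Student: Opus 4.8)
The plan is to derive Corollary~\ref{cor:SDP} as a direct instance of Theorem~\ref{thm:CP}, the only real work being to confirm that semidefinite programming is exactly the specialization of~\eqref{eq:CP} under the dictionary $\E=\Herbn$, $C=\Cbn$, $\langle\cdot,\cdot\rangle=\bullet$, $\N(\E)=\GbE$. The point that requires verification is that the Gaussian ensemble is the \emph{standard} normal distribution on the euclidean space $(\Herbn,\bullet)$. For this one writes down the $\bullet$-orthonormal basis of $\Herbn$ made of the diagonal units $E_{kk}$ ($1\le k\le n$), the symmetric units $\tfrac{1}{\sqrt2}(E_{kl}+E_{lk})$ ($1\le k<l\le n$), and, for each of the $\be-1$ imaginary units $u\in\{\bi,\bj,\bk\}$, the skew units $\tfrac{1}{\sqrt2}(uE_{kl}-uE_{lk})$; this basis has $n+\be\tbinom{n}{2}=d_{\be,n}$ elements, reconfirming~\eqref{eq:def-d_(b,n)}, and the assertion that the coordinates of a $\GbE$ matrix in it are i.i.d.\ $\N(0,1)$ is precisely the definition of GOE/GUE/GSE. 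Granting this, $\Prob[\SDP_\be\text{ infeasible}]=\Prob[\CP\text{ infeasible}]$ and $\Prob[\SDP_\be\text{ unbounded}]=\Prob[\CP\text{ unbounded}]$ for $C=\Cbn$ and $d=d_{\be,n}$, so the first display is immediate from~\eqref{eq:CP-probs1}.

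For the rank formula I would apply~\eqref{eq:CP-probs2} with the test set $M=\Wbnr$. This is admissible since $\Wbnr\in\hmB(\Herbn)$: it is Borel --- indeed semialgebraic, being cut out of $\Herbn$ by vanishing and non-vanishing of minors --- and it is invariant under scaling by every $\lambda>0$ because $\rk(\lambda A)=\rk A$ for $\lambda\ne0$. On the almost sure event that $\SDP_\be$ is feasible and bounded, the solution set is a single point (by the convex-geometric fact cited after the definition of $\sol(\CP)$), and then $\{\rk(\sol(\SDP_\be))=r\}$ is exactly the event $\{\sol(\SDP_\be)\in\Wbnr\}$; hence Theorem~\ref{thm:CP} yields $\Prob[\rk(\sol(\SDP_\be))=r]=\Phi_m(\Cbn,\Wbnr)=\Phi_m(\be,n,r)$ by the notation~\eqref{eq:def-Phi_j(b,n,r)}. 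Summing over $r$ and using the disjoint decomposition~\eqref{eq:decomp-C-rank} together with~\eqref{eq:decomp-V_j(Cbn)} gives $\Phi_m(\Cbn,\Cbn)=V_m(\Cbn)$, so the ``infeasible'', ``rank $r$'', and ``unbounded'' events indeed partition the total mass $\sum_j V_j(\Cbn)=1$, which is a useful consistency check.

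I do not expect any genuine obstacle: all the geometric and probabilistic substance lies in Theorem~\ref{thm:CP}. The only part deserving a careful sentence is the treatment of the non-generic configurations --- $\SDP_\be$ feasible and bounded but with empty or non-singleton solution set, and the boundary cases $m=0$ and $m=d_{\be,n}$ --- which are handled exactly as in the proof of Theorem~\ref{thm:CP} and occur with probability zero, so that $\sol(\SDP_\be)$ is almost surely well defined off the ``infeasible'' and ``unbounded'' events and the three probabilities are mutually exclusive.
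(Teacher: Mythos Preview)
Your proposal is correct and matches the paper's approach exactly: the paper simply states the corollary as a specialization of Theorem~\ref{thm:CP} and gives no separate proof. Note that your verification that $\GbE$ is the standard normal on $(\Herbn,\bullet)$ is strictly speaking unnecessary, since the paper \emph{defines} $\GbE$ this way; otherwise your checks that $\Wbnr\in\hmB(\Herbn)$ and the invocation of~\eqref{eq:CP-probs1}--\eqref{eq:CP-probs2} are precisely what is needed.
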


See Section~\ref{sec:form-Phi_j(be,n,r)} for explicit formulas for~$V_j(\Cbn)$ and $\Phi_j(\be,n,r)$. For $\be=4$, $n=3$, $m=6$ the probabilities from Corollary~\ref{cor:SDP} are shown in Figure~\ref{fig:decomp-intr-vol}.

\begin{figure}
  \centerline{\begin{tikzpicture}[xscale=0.8, yscale=20]
  \draw (-1,0) -- (16,0);
  \foreach \x in {0,1,...,15} \draw (\x,0) -- (\x,-0.005) node[below]{$V_{\x}$};
  \draw[thick] (-0.5,0) rectangle (0.5,0.0003286279277);
  \draw[thick] (0.5,0) rectangle (1.5,0.002109727388);
  \draw[thick] (1.5,0) rectangle (2.5,0.007957747152);
  \draw[thick] (2.5,0) rectangle (3.5,0.02238263631);
  \draw[thick] (3.5,0) rectangle (4.5,0.05039906529);
  \draw[thick] (4.5,0) rectangle (5.5,0.09375000000);
  \draw[thick] (5.5,0) rectangle (6.5,0.1438585711);
  \draw[thick] (6.5,0) rectangle (7.5,0.1792136249);
  \draw[thick] (7.5,0) rectangle (8.5,0.1792136249);
  \draw[thick] (8.5,0) rectangle (9.5,0.1438585711);
  \draw[thick] (9.5,0) rectangle (10.5,0.09375000000);
  \draw[thick] (10.5,0) rectangle (11.5,0.05039906529);
  \draw[thick] (11.5,0) rectangle (12.5,0.02238263631);
  \draw[thick] (12.5,0) rectangle (13.5,0.007957747152);
  \draw[thick] (13.5,0) rectangle (14.5,0.002109727388);
  \draw[thick] (14.5,0) rectangle (15.5,0.0003286279277);
  \draw (5.5,0.1273239544) -- node[above]{$\scriptstyle2$} ++(1,0);
  \draw (6.5,0.1155516477) -- node[above]{$\scriptstyle2$} ++(1,0);
  \draw (7.5,0.06366197722) -- node[above]{$\scriptstyle2$} ++(1,0);
  \draw (8.5,0.01653461654) -- node[above]{$\scriptstyle2$} ++(1,0);
  \path (0,0) node[above]{$\scriptstyle0$}
        (1,0) node[above]{$\scriptstyle1$}
        (2,0) node[above]{$\scriptstyle1$}
        (3,0) node[above]{$\scriptstyle1$}
        (4,0) node[above]{$\scriptstyle1$}
        (5,0) node[above]{$\scriptstyle1$}
        (6,0) node[above]{$\scriptstyle1$}
        (7,0) node[above]{$\scriptstyle1$}
        (8,0) node[above]{$\scriptstyle1$}
        (9,0) node[above]{$\scriptstyle1$}
        (10,0) node[above]{$\scriptstyle2$}
        (11,0) node[above]{$\scriptstyle2$}
        (12,0) node[above]{$\scriptstyle2$}
        (13,0) node[above]{$\scriptstyle2$}
        (14,0) node[above]{$\scriptstyle2$}
        (15,0) node[above]{$\scriptstyle3$};
  \draw[dashed] (5,0.21) node[below=3mm, left=-3.5mm]{$\Prob[\SDP_4 \text{ infeasible}]$} -- (5.5,0.21) -- (5.5,-0.03) -- (5,-0.03);
  \draw[dashed] (7,0.21) node[below=3mm, right=-3.5mm]{$\Prob[\SDP_4 \text{ unbounded}]$}-- (6.5,0.21) -- (6.5,-0.03) -- (7,-0.03);
  \draw[->,>=stealth] (4.5,0.14) node[left]{$\Prob[\rk(\sol(\SDP_4))=2]$} -- (5.7,0.135);
  \draw[->,>=stealth] (4.5,0.115) node[left]{$\Prob[\rk(\sol(\SDP_4))=1]$} -- (5.7,0.1);
  \end{tikzpicture}}
  \caption{The intrinsic volumes of $\C_{4,3}$ and their decompositions in curvature measures. The small numbers indicate the contributions of the ranks. The probabilities from Corollary~\ref{cor:SDP} for $m=6$ are also indicated.}
  \label{fig:decomp-intr-vol}
\end{figure}
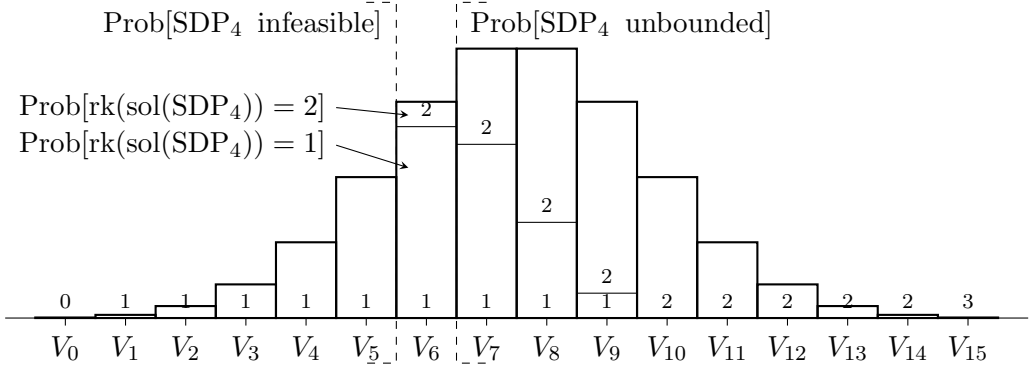

\section{Applications of the kinematic formula}\label{sec:appl-kin-form}

The goal here is to provide the proofs for Theorem~\ref{thm:hCP} and Theorem~\ref{thm:CP}. In Section~\ref{sec:intrvol-polyhedr} we first introduce the notion of intrinsic volumes and curvature measures for polyhedral cones; the case of general closed convex cones is deferred to Section~\ref{sec:spher-intr-vol}. The kinematic formula will be presented in Section~\ref{sec:kinem-form}. In Section~\ref{sec:hCP} and Section~\ref{sec:CP} we prove Theorem~\ref{thm:hCP} and Theorem~\ref{thm:CP}, respectively, by means of the kinematic formula.

\subsection{Intrinsic volumes of polyhedral cones}\label{sec:intrvol-polyhedr}

The \emph{intrinsic volumes} of closed convex cones are usually defined in the spherical setting, which is obtained by intersecting the cone with the unit sphere. We will give the definition of the spherical intrinsic volumes, which is a bit technical, in Section~\ref{sec:spher-intr-vol}. However, we would like to mention at this point the close relationship between the conic $\sigma$-algebra $\hmB(\E)$, that we defined in~\eqref{eq:def-hat(B)(E)}, and the Borel algebra on the unit sphere $S(\E)=\{x\in\E\mid \|x\|=1\}$. Namely, we have the decomposition $\hmB(\E)=\hmB_0(\E)\;\dot{\cup}\;\hmB_\emptyset(\E)$, where $\hmB_0(\E) := \{ M\in \hmB(\E)\mid 0\in M\}$ and $\hmB_\emptyset(\E) := \{ M\in \hmB(\E)\mid 0\not\in M\}$. A moment of thought reveals that the mappings
  \[ \hmB_0(\E)\to \mB(S(\E)) \,,\; M\mapsto M\cap S(\E) \;,\qquad \hmB_\emptyset(\E)\to \mB(S(\E)) \,,\; M\mapsto M\cap S(\E) \]
are bijections, i.e., we may identify both $\hmB_0(\E)$ and $\hmB_\emptyset(\E)$ with the Borel algebra $\mB(S(\E))$. So it might seem that our definition of~$\hmB(\E)$ is superfluous, or overly pedantic. But in fact, the use of~$\hmB(\E)$ is not only convenient, as we will see in the course of this section, but also valuable in the context of the support measures, that we will describe in Section~\ref{sec:supp-meas}. In the following paragraphs we will give an illustrative characterization of the curvature measures and the intrinsic volumes for polyhedral cones, i.e., intersections of finitely many closed half-spaces.

If $C\subseteq\IR^d$ is a closed convex cone, we denote by $\breve{C}:=\{x\in\IR^d\mid \forall y\in C:\langle x,y\rangle\leq0\}$ the \emph{dual cone} of~$C$ in~$\IR^d$. (Occasionally, we will also use the notation $\dual(C):=\breve{C}$, if this is more convenient.) The most important cones used in convex programming are \emph{self-dual}, i.e., $\breve{C}=-C$; it is well-known, cf.~for example~\cite[\S II.12]{Barv}, that~$\Cbn$ is self-dual, i.e., $\dual(\Cbn)=-\Cbn$.

A \emph{supporting hyperplane}~$H$ of~$C$ is a hyperplane such that~$C$ lies in one of the closed half-spaces bounded by~$H$. The intersection~$H\cap C$ is called a \emph{face}\footnote{Some authors differentiate between faces and exposed faces, cf.~for example~\cite{Schn:book}. We do not make this distinction, as for those cones in which we are interested both notions coincide.} of~$C$.

A \emph{polyhedral cone} $C\subseteq\IR^d$ is the intersection of finitely many closed half-spaces bounded by linear hyperplanes. The boundary of the cone~$C$ decomposes in the disjoint union of the relative interiors of its faces. More precisely, we have
  \[ C=\dot{\bigcup}_{F\in\F} \, F \;,\qquad \F:=\{\relint(C\cap v^\bot)\mid v\in \breve{C}\} \; , \]
where $v^\bot := \{x\in\IR^d\mid \langle x,v\rangle=0\}$. Let $\F_j:=\{F\in\F\mid \dim(\spa F)=j\}$ denote the set of (the relative interiors of) the \mbox{$j$-dimensional} faces of~$C$, $j=0,1,\ldots,d$.

For a spherical Borel set $M^s\in\mB(S^{d-1})$ we can write the $(d-1)$-dimensional normalized Hausdorff volume, which we shall denote by $\rvol(M^s)$, in the form
\begin{equation}\label{eq:def-rvol}
  \rvol(M^s) = \underset{x\in\N(0,I_d)}{\Prob}\big[x\in M\big] \; ,
\end{equation}
where $\N(0,I_d)$ denotes the standard normal distribution on~$\IR^d$, and $M:=\{\lambda x\mid \lambda>0, x\in M^s\}\in\hmB(\IR^d)$. Clearly, the distribution $\N(0,I_d)$ may be replaced by any other orthogonal invariant distribution~$\mu$ on~$\IR^d$, which satisfies~$\mu(\{0\})=0$.

Denoting by $\Pi_C\colon\IR^d\to C$, $x\mapsto \argmin\{\|x-y\|\mid y\in C\}$ the canonical projection on~$C$, the intrinsic volumes of~$C$ are given by
\begin{equation}\label{eq:V_j(C)-polyhdrl}
  V_j(C) = \sum_{F\in\F_j} \; \underset{x\in\N(0,I_d)}{\Prob}\big[\Pi_C(x)\in F\big] \;,\quad j=0,1,\ldots,d \; .
\end{equation}
Note that $V_d(C)=\rvol(C\cap S^{d-1})$ and $V_0(C)=\rvol(\breve{C}\cap S^{d-1})$.

For $M\in\hat{\mB}(\IR^d)$ the curvature measures of $C$ evaluated in $M$ are given by
\begin{equation}\label{eq:Phi_j(C,M)-polyhdrl}
   \Phi_j(C,M) = \sum_{F\in\F_j} \; \underset{x\in\N(0,I_d)}{\Prob}\big[\Pi_C(x)\in F\cap M\big] \;,\quad j=0,1,\ldots,d \; .
\end{equation}
Note that for $j\in\{0,d\}$ we have
\begin{equation}\label{eq:Phi_d-Phi_0}
  \Phi_d(C,M) = \rvol(C\cap M\cap S^{d-1}) \;,\quad \Phi_0(C,M) = \begin{cases} V_0(C) = \rvol(\breve{C}\cap S^{d-1}) & \text{if } 0\in M \; , \\ 0 & \text{if } 0\not\in M \; . \end{cases}
\end{equation}
Additionally, we define $V_j(C):=0$ and $\Phi_j(C,M):=0$ for $j>d$.

One could use the formulas~\eqref{eq:V_j(C)-polyhdrl} and~\eqref{eq:Phi_j(C,M)-polyhdrl} to define the intrinsic volumes and curvature measures for general closed convex cones, using an approximation procedure. But a more useful definition is via a spherical version of Steiner's formula for the volume of the tube around a convex set. We will describe this in Section~\ref{sec:spher-intr-vol}.

The following well-known facts about the intrinsic volumes and the curvature measures may be verified easily for polyhedral cones using the above characterizations of $V_j$ and $\Phi_j$ in~\eqref{eq:V_j(C)-polyhdrl} and~\eqref{eq:Phi_j(C,M)-polyhdrl}.

\begin{proposition}\label{prop:facts-intrvol}
\begin{enumerate}
  \item Interpreting $C\subseteq\IR^d$ as a cone in $\IR^{d'}$ with $d'\geq d$ does not change the intrinsic volumes nor the curvature measures.
  \item \label{enum:ivol-sum=1}
        The intrinsic volumes and the curvature measures are nonnegative, and $\sum_{j=0}^d V_j(C) = 1$ if $C\subseteq\IR^d$. We have $V_j(\IR^i)=\delta_{ij}$ the Kronecker delta.
  \item The curvature measure $\Phi_j(C,.)$ is concentrated on~$C$, that is, $\Phi_j(C,M)=\Phi_j(C,M\cap C)$. Furthermore, we have $\Phi_j(C,C)=V_j(C)$.
  \item The intrinsic volumes and the curvature measures are invariant under orthogonal transformations, i.e., for $Q\in O(d)$ we have $V_j(QC) = V_j(C)$ and $\Phi_j(QC,QM) = \Phi_j(C,M)$.
  \item For the intrinsic volumes of the dual cone we have $V_j(C) = V_{d-j}(\breve{C})$.
  \item \label{enum:prod-rule}
        If $C_1,C_2$ are closed convex cones, then $V_j(C_1\times C_2) = \sum_{i=0}^j V_i(C_1)\cdot V_{j-i}(C_2)$.
        In other words, the intrinsic volumes of a product arise as the convolution of the intrinsic volumes of the components.
  \item \label{enum:Phi_j(Pi_W(C),Pi_W(M))=...}
        If $W\subseteq\IR^d$ is a linear subspace of codimension~$m$ and $\Pi_W\colon\IR^d\to W$ the orthogonal projection onto~$W$, then $\Phi_j(\Pi_W(C),\Pi_W(M))=\Phi_{j+m}(C+W^\bot,M+W^\bot)$ for $M\in\hmB(\IR^d)$.
  \item \label{enum:prob-proj}
        The probability that the projection of a Gaussian vector lies in $M\in\hat{\mB}(\IR^d)$ is given by $\underset{x\in\N(0,I_d)}{\Prob}[\Pi_C(x)\in M] = \sum_{j=0}^d \Phi_j(C,M)$. \qed
\end{enumerate}
\end{proposition}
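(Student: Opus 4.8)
The plan is to assemble one common toolbox from the polyhedral formulas \eqref{eq:V_j(C)-polyhdrl} and \eqref{eq:Phi_j(C,M)-polyhdrl}, and then dispatch all eight items with it. The toolbox has four parts. First, the structural picture: for polyhedral $C\subseteq\IR^d$ one has the \emph{normal-bundle decomposition} $\IR^d=\dot{\bigcup}_{F\in\F}\bigl(\relint F+N(C,F)\bigr)$, where $N(C,F)$ is the normal cone of $C$ along $F$; the cones $N(C,F)$ are exactly the faces of the dual cone $\breve C$, the assignment $F\mapsto N(C,F)$ reverses inclusions, and $\dim\spa N(C,F)=d-\dim\spa F$. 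Second, Moreau's decomposition: $x=\Pi_C(x)+\Pi_{\breve C}(x)$ with orthogonal summands, and $\Pi_C(x)\in\relint F\iff\Pi_{\breve C}(x)\in\relint N(C,F)$; in particular, since $C=\dot{\bigcup}_{F\in\F}F$ is already given in the text, for \emph{every} $x$ the point $\Pi_C(x)$ lies in exactly one face-relative-interior. Third, the transformation rules $\Pi_{QC}=Q\,\Pi_C\,Q^{-1}$ for $Q\in O(d)$, $\Pi_{C_1\times C_2}=\Pi_{C_1}\times\Pi_{C_2}$, and $\Pi_{W^\bot}=\mathrm{id}$, together with the facts that the standard Gaussian is $O(d)$-invariant, factors over orthogonal direct sums, and has standard Gaussian marginals. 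Fourth, the face-level combinatorics: the faces of $QC$ are the $QF$, the faces of $C_1\times C_2$ are the products $F_1\times F_2$ with additive dimensions, and under $\IR^d\cong W\oplus W^\bot$ one has $C+W^\bot=\Pi_W(C)\times W^\bot$ and $M+W^\bot=\Pi_W(M)\times W^\bot$, with $\Pi_W(C)$ again a closed polyhedral cone.

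With this in hand I would do the easy items first. Items 2, 3 and 8 are essentially immediate: nonnegativity is visible in \eqref{eq:V_j(C)-polyhdrl}, \eqref{eq:Phi_j(C,M)-polyhdrl}; summing those formulas over all faces and using that $\Pi_C(x)$ always lies in exactly one $\relint F$ yields simultaneously $\sum_j V_j(C)=1$ and the projection identity of item 8; $V_j(\IR^i)=\delta_{ij}$ because $\IR^i$ has the single $i$-dimensional face $\IR^i$ and $\Pi_{\IR^i}=\mathrm{id}$; and $F\subseteq C$ gives $F\cap M=F\cap(M\cap C)$, hence item 3 including $\Phi_j(C,C)=V_j(C)$. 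Item 1 follows from $\dual_{\IR^{d'}}(C)=\breve C\oplus(\IR^{d'}\ominus\IR^d)$, whence $C\cap v^\bot=C\cap(\Pi_{\IR^d}v)^\bot$, so the face set and all numbers $\dim\spa F$ are unchanged, while $\Pi^{\IR^{d'}}_C=\Pi^{\IR^d}_C\circ\Pi_{\IR^d}$ and the $\IR^d$-marginal of a standard Gaussian is standard Gaussian preserve each summand. Item 4 follows by substituting $y=Q^{-1}x$ and using $O(d)$-invariance of the Gaussian. Item 6 follows by grouping the faces $F_1\times F_2$ of $C_1\times C_2$ by the pair $(\dim\spa F_1,\dim\spa F_2)$ and using that the Gaussian on $\E_1\oplus\E_2$ is the product Gaussian, which turns \eqref{eq:V_j(C)-polyhdrl} into the asserted convolution.

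The two items carrying real content are 5 and 7. For item 5 I would use that $F\mapsto N(C,F)$ is a dimension-complementing bijection $\F_j(C)\to\F_{d-j}(\breve C)$ together with $\Pi_C(x)\in\relint F\iff\Pi_{\breve C}(x)\in\relint N(C,F)$; since for Gaussian $x$ the vector $\Pi_{\breve C}(x)$ is distributed as in the computation of $V_{d-j}(\breve C)$, reindexing \eqref{eq:V_j(C)-polyhdrl} gives $V_j(C)=V_{d-j}(\breve C)$. For item 7 I would pass to $\IR^d\cong W\oplus W^\bot$, rewrite $C+W^\bot=\Pi_W(C)\times W^\bot$ and $M+W^\bot=\Pi_W(M)\times W^\bot$, note that the faces of $\Pi_W(C)\times W^\bot$ are the $F'\times W^\bot$ with $\dim\spa(F'\times W^\bot)=\dim\spa F'+m$, and that $\Pi_{C+W^\bot}(y,z)=(\Pi_{\Pi_W(C)}(y),z)$, and then read off
\[
   \Phi_{j+m}(C+W^\bot,M+W^\bot)
   =\!\!\sum_{F'\in\F_j(\Pi_W(C))}\!\!\Prob\bigl[\Pi_{\Pi_W(C)}(y)\in F'\cap\Pi_W(M)\bigr]
   =\Phi_j(\Pi_W(C),\Pi_W(M)),
\]
using that a standard Gaussian on $W\oplus W^\bot$ restricts to a standard Gaussian on $W$. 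A minor point here is to note that $\Pi_W(M)$ lies in $\hmB(W)$ and that linear images of polyhedral cones are again closed polyhedral cones, so the formulas apply.

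The main obstacle, to the extent there is one, is setting up the polyhedral normal-bundle picture cleanly — the inclusion-reversing, dimension-complementing bijection between the faces of $C$ and those of $\breve C$, paired with Moreau's identity relating $\Pi_C$ and $\Pi_{\breve C}$ on each piece. Once that is in place, items 5 and 7 are one-line reindexings and everything else is formal manipulation of \eqref{eq:V_j(C)-polyhdrl} and \eqref{eq:Phi_j(C,M)-polyhdrl}.
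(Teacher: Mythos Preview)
Your proposal is correct and follows exactly the approach the paper indicates: the paper gives no proof beyond the sentence ``may be verified easily for polyhedral cones using the above characterizations of $V_j$ and $\Phi_j$ in~\eqref{eq:V_j(C)-polyhdrl} and~\eqref{eq:Phi_j(C,M)-polyhdrl}'' and closes with a \qedsymbol. Your sketch simply carries out that verification in detail, using the standard polyhedral toolbox (Moreau decomposition, the inclusion-reversing face correspondence $F\mapsto N(C,F)$, and the product/projection behaviour of faces and of the Gaussian), which is precisely what the paper intends the reader to supply.
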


Note that the self-duality of $\Cbn$ implies
\begin{equation}\label{eq:V_j(Cbn)=V_(d-j)(Cbn)}
  V_j(\Cbn)=V_{d_{\be,n}-j}(\Cbn) \; .
\end{equation}

An important but nontrivial property of the intrinsic volumes is the following consequence of the Gauss-Bonnet Theorem: For a closed convex cone $C\subseteq\IR^d$
\begin{equation}\label{eq:Gauss-Bonnet}
  V_1(C)+V_3(C)+V_5(C) + \ldots = \tfrac{1}{2}\cdot \chi(C\cap S^{d-1}) \; ,
\end{equation}
where $\chi$ denotes the \emph{Euler characteristic}, cf.~\cite[Sec.~4.3]{Gl} or~\cite[Thm.~6.5.5]{SW:08}. Note that $\chi(C\cap S^{d-1})=1$ if $C$ is a closed convex cone which is not a linear subspace. In this case we have
\begin{equation}\label{eq:sum-half=1/2}
  \sum_{j\text{ even}} V_j(C) = \sum_{j\text{ odd}} V_j(C) = \tfrac{1}{2} \; .
\end{equation}

\begin{remark}\label{rem:intrvol-LP}
An important example for a polyhedral cone is the positive orthant~$\IR_+^d$. Its intrinsic volumes are easily computed using the product rule~\eqref{enum:prod-rule} in Proposition~\ref{prop:facts-intrvol}: We have $V_0(\IR_+)=V_1(\IR_+)=\frac{1}{2}$, i.e., the intrinsic volumes of a $1$-dimensional ray form a symmetric Bernoulli distribution. Hence, the intrinsic volumes of the positive orthant $\IR_+^d=\IR_+\times\ldots\times\IR_+$ are the $d$-times convolution of the Bernoulli distribution, i.e., the symmetric binomial distribution $V_j(\IR_+^d)=\binom{d}{j}/2^d$.
\end{remark}

\subsection{The kinematic formula}\label{sec:kinem-form}

Euclidean versions of the kinematic formulas are well-known, cf.~for example the survey article~\cite{HS:02} and the references given therein. Spherical versions, on the other hand, are less well-known. The formulas we need in this paper may be derived from a general version due to Glasauer~\cite{Gl}, cf.~also~\cite{Gl:Summ} or~\cite[\S2.4]{HS:02} for (short) summaries. As the literature for spherical intrinsic volumes is sparse and known results sometimes hard to find, we will describe in the appendix how the kinematic formulas, as we state them here, are derived from Glasauer's result.

The uniform probability distribution on the set of $k$-dimensional subspaces of~$\IR^d$, the \emph{Grassmann manifold} $\Gr_{d,k}=\{W\subseteq\IR^d\mid W \text{ linear subspace of dimension } k\}$, is characterized as the unique probability distribution, which is invariant under the action of the orthogonal group~$O(d)$. Loosely speaking, a $k$-dimensional subspace is drawn uniformly at random, if every subspace `has the same probability'. This distribution is for example obtained as the push-forward of Gaussian matrices (of appropriate format) via taking the kernel or the image.

\begin{thm}[Kinematic formula]\label{thm:kinem-form}
Let $C\subseteq\IR^d$ be a closed convex cone, and let $W\subseteq\IR^d$ be a uniformly random subspace of codimension~$m\in\{1,\ldots,d-1\}$, i.e., $\dim W=d-m$. Furthermore, let $M\in\hat{\mB}(\IR^d)$ be such that $M\subseteq C$, and let $\Pi_W$ denote the orthogonal projection on~$W$. Then we have for the \emph{random intersection} $C\cap W$
\begin{align}
  \IE\big[\Phi_j(C\cap W,M\cap W)\big] & = \Phi_{m+j}(C,M) \;,\quad \text{for } j=1,2,\ldots,d-m \; ,
\label{eq:random-inters}
\\[1mm]\IE\big[V_0(C\cap W)\big] & = V_0(C)+V_1(C)+\ldots+V_m(C) \; ,
\label{eq:random-inters-2}
\end{align}
and for the \emph{random projection} $\Pi_W(C)$ we have
\begin{align}
  \IE\big[\Phi_j(\Pi_W(C),\Pi_W(M))\big] & = \Phi_j(C,M) \;,\quad \text{for } j=0,1,\ldots,d-m-1 \; ,
\label{eq:random-proj}
\\[1mm]\IE\big[V_{d-m}(\Pi_W(C))\big] & = V_{d-m}(C)+V_{d-m+1}(C)+\ldots+V_d(C) \; .
\label{eq:random-proj-2}
\end{align}
\end{thm}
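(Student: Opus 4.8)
The plan is to deduce all four identities from a single master statement: the general spherical kinematic formula for \emph{support measures} due to Glasauer, specialised to the case where one of the two cones is a random subspace~$W$. In the appendix we will have at our disposal a formula of the shape
\[
  \IE\big[\Theta_j(C\cap W,\,\cdot\,)\big] \;=\; \text{(a linear combination of the }\Theta_k(C,\,\cdot\,)\text{)},
\]
valid for all indices~$j$, where $\Theta_k$ denotes the $k$th spherical support measure. The first step is therefore to recall this appendix result and to pin down the precise normalisation constants, which for the codimension-$m$ subspace collapse to Kronecker-type coefficients: intersecting with a uniform $W$ of codimension~$m$ shifts the index by exactly~$m$, because a uniform subspace behaves, in the calculus of spherical intrinsic volumes, like a ``cone'' all of whose mass sits in top dimension. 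Projecting the support-measure identity onto its curvature-measure part (i.e.\ evaluating the support measure on $M\times S(\E)$ rather than tracking the normal-bundle component) yields~\eqref{eq:random-inters} directly, for $j=1,\dots,d-m$.

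The second step handles the two boundary identities \eqref{eq:random-inters-2} and \eqref{eq:random-proj-2}, where the clean index shift breaks down because $\Phi_0$ and $\Phi_d$ are special (recall \eqref{eq:Phi_d-Phi_0}). Here I would not invoke Glasauer again but instead argue by a telescoping/completeness trick. Using part~\eqref{enum:prob-proj} of Proposition~\ref{prop:facts-intrvol}, $\sum_{j} \Phi_j(C\cap W, (C\cap W)) = 1$ since $C\cap W$ is a closed convex cone in~$W$, so $\sum_{j=0}^{d-m} V_j(C\cap W) = 1$; taking expectations and substituting the already-proven \eqref{eq:random-inters} with $M = C$ (note $\Phi_{m+j}(C,C) = V_{m+j}(C)$) for $j=1,\dots,d-m$ leaves $\IE[V_0(C\cap W)] = 1 - \sum_{k=m+1}^{d} V_k(C) = \sum_{k=0}^m V_k(C)$, which is \eqref{eq:random-inters-2}.

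The third step derives the two projection identities \eqref{eq:random-proj} and \eqref{eq:random-proj-2} from the intersection identities by duality. The key facts are that $\Pi_W(C)$ and $(C\cap W)$-dual-in-$W$ are related by $\dual_W(\Pi_W(C)) = \breve{C}\cap W$, that $W$ is again a uniform codimension-$m$ subspace of the same $\IR^d$ (orthogonal invariance of the Grassmannian distribution), and that $V_j(D) = V_{(\dim W) - j}(\dual_W D)$ together with the matching relation for curvature measures (Proposition~\ref{prop:facts-intrvol}, parts 4--5, applied inside~$W$). Concretely, apply \eqref{eq:random-inters}/\eqref{eq:random-inters-2} to the cone $\breve{C}$ and the Borel set $\breve{M}$ (or rather the appropriate dual set), then translate via $V_j(\breve C) = V_{d-j}(C)$ and part~\eqref{enum:Phi_j(Pi_W(C),Pi_W(M))=...} of the Proposition, which already records $\Phi_j(\Pi_W(C),\Pi_W(M)) = \Phi_{j+m}(C+W^\bot, M+W^\bot)$; combining this local projection formula with the expectation identity for intersections of the dual cone gives \eqref{eq:random-proj}, and the telescoping argument of step two, now applied to $\Pi_W(C)$, gives \eqref{eq:random-proj-2}.

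The main obstacle, and the part deferred to the appendix, is genuinely the first step: extracting from Glasauer's support-measure kinematic formula --- which is stated for two general closed convex cones in the sphere and involves an integral over the full orthogonal group --- the clean specialisation to a random subspace, with the correct constants. One must check that the ``mixed'' kinematic coefficients $c^{j}_{k,\ell}$ degenerate to $\delta$'s when one argument is a subspace, verify the measurability and integrability needed to pull the expectation inside, and confirm that the normalisation of $\rvol$ in \eqref{eq:def-rvol} is consistent with the normalisation implicit in Glasauer's constants. Everything after that --- steps two and three --- is bookkeeping with the elementary properties already listed in Proposition~\ref{prop:facts-intrvol} and the Gauss--Bonnet-type completeness relation.
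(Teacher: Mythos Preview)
Your first two steps coincide with the paper's argument: the paper likewise specialises Glasauer's support-measure kinematic formula to the case where the second cone is a fixed linear subspace $W_0$ of codimension~$m$, observes that $\Theta_{d+j-k}(W_0,W_0\times W_0^\bot)=V_{d+j-k}(\IR^{d-m})=\delta_{k,j+m}$ collapses the sum to a single term, and this yields~\eqref{eq:random-inters}; then \eqref{eq:random-inters-2} follows by the completeness relation $\sum_j V_j=1$ exactly as you describe.

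Your third step, however, has a genuine gap for the localised identity~\eqref{eq:random-proj}. You propose to pass from intersections to projections by the conic duality $\dual_W(\Pi_W(C))=\breve C\cap W$ ``together with the matching relation for curvature measures (Proposition~\ref{prop:facts-intrvol}, parts 4--5)''. But Proposition~\ref{prop:facts-intrvol}(5) only gives the duality $V_j(C)=V_{d-j}(\breve C)$ for \emph{intrinsic volumes}; there is no analogous relation $\Phi_j(C,M)\leftrightarrow\Phi_{d-j}(\breve C,\text{?})$ for curvature measures, because there is no sensible ``dual'' of a Borel set~$M$. The paper states this explicitly at the beginning of Section~\ref{sec:supp-meas}: ``A major drawback of the curvature measures is that \ldots\ they do not possess a duality structure.'' Your hedge ``the Borel set $\breve M$ (or rather the appropriate dual set)'' is precisely where the argument breaks. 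Consequently your duality shortcut establishes $\IE[V_j(\Pi_W(C))]=V_j(C)$ and hence~\eqref{eq:random-proj-2} via telescoping, but not~\eqref{eq:random-proj} for general~$M$.

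The paper fixes this by staying at the support-measure level: it rewrites $\Phi_j(\Pi_W(C),\Pi_W(M))$ via Proposition~\ref{prop:facts-intrvol}\eqref{enum:Phi_j(Pi_W(C),Pi_W(M))=...} as $\Theta_{j+m}\big((C,M\times\breve C)\vee(W^\bot,W^\bot\times W)\big)$ and then applies Glasauer's \emph{second} kinematic formula~\eqref{eq:Glas-kinem2} (the $\vee$-version), which is itself the support-measure dual of the $\wedge$-version you already used. In other words, your intuition that duality is the bridge is correct, but the duality must be the one for support measures, $\Theta_j(C,\M)=\Theta_{d-j}(\breve C,\M^*)$, not a nonexistent one for curvature measures.
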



Using the expression of the Euler characteristic given in~\eqref{eq:Gauss-Bonnet}, one obtains from~\eqref{eq:random-inters} a corollary about the probability that the random intersection $C\cap W$ is the zero set~$\{0\}$.

\begin{corollary}\label{cor:prob-CcapW=0}
Let $C\subset\IR^d$ be a closed convex cone, which is not a linear subspace. Then for $W\subseteq\IR^d$ a uniformly random subspace of codimension~$m$
\begin{equation}\label{eq:Prob[CcapW_nonempty]}
  \Prob\big[C\cap W=\{0\}\big] = 2\cdot \big(V_{m-1}(C)+V_{m-3}(C)+V_{m-5}(C)+\ldots\big) \; .
\end{equation}
\end{corollary}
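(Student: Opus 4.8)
The plan is to combine the random-intersection formula~\eqref{eq:random-inters} with the Gauss--Bonnet relation~\eqref{eq:Gauss-Bonnet}. The key observation is that $\Prob[C\cap W=\{0\}]$ can be extracted from the intrinsic volumes of the random cone $C\cap W$: since $C\cap W$ is almost surely a closed convex cone in $W\cong\IR^{d-m}$ and $C\cap W=\{0\}$ iff $V_0(C\cap W)=1$ (equivalently $V_j(C\cap W)=\delta_{0j}$ by item~\ref{enum:ivol-sum=1} of Proposition~\ref{prop:facts-intrvol}), I would first express the event probability as an expectation of a quantity that is linear in the $V_j(C\cap W)$.

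The cleanest route uses the dichotomy that holds almost surely: $C\cap W$ is either $\{0\}$ or a closed convex cone that is not a linear subspace. Indeed, $W$ is a uniformly random subspace, so almost surely $W$ meets the lineality space of $C$ only in the origin, and almost surely $C\cap W$ has empty interior considerations can be ruled out; in the non-trivial case $C\cap W$ is pointed and not a subspace, whence by~\eqref{eq:sum-half=1/2} the odd-indexed intrinsic volumes of $C\cap W$ sum to $\tfrac12$, while in the trivial case $C\cap W=\{0\}$ they sum to $0$. Therefore
\begin{equation*}
  \IE\Big[\sum_{j\text{ odd}} V_j(C\cap W)\Big] = \tfrac12\cdot\Prob\big[C\cap W\neq\{0\}\big] = \tfrac12\big(1-\Prob[C\cap W=\{0\}]\big)\,.
\end{equation*}
On the other hand, the left-hand side can be evaluated termwise: $\IE[V_j(C\cap W)]$ for $j\geq1$ is given by~\eqref{eq:random-inters} with $M=C$ (using $\Phi_j(C\cap W,C\cap W)=V_j(C\cap W)$ and $\Phi_{m+j}(C,C)=V_{m+j}(C)$), namely $\IE[V_j(C\cap W)]=V_{m+j}(C)$ for $j=1,\dots,d-m$, and it vanishes for $j>d-m$. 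Summing over odd $j\geq1$ gives $\sum_{j\text{ odd},\,j\geq1}V_{m+j}(C)=V_{m+1}(C)+V_{m+3}(C)+\cdots$. Combining, $\tfrac12(1-\Prob[C\cap W=\{0\}])=V_{m+1}(C)+V_{m+3}(C)+\cdots$, so $\Prob[C\cap W=\{0\}]=1-2\big(V_{m+1}(C)+V_{m+3}(C)+\cdots\big)$. Finally I would rewrite this using $\sum_j V_j(C)=1$ and the parity split~\eqref{eq:sum-half=1/2}: $\tfrac12=V_1(C)+V_3(C)+\cdots$ (as $C$ itself is not a linear subspace), which splits into $\big(V_1(C)+\cdots+V_{m-1}(C)\text{ or }+V_m(C)\big)$ plus the tail $V_{m+1}(C)+V_{m+3}(C)+\cdots$; matching parities, $1-2(\text{tail}) = 2(V_{m-1}(C)+V_{m-3}(C)+\cdots)$, which is exactly~\eqref{eq:Prob[CcapW_nonempty]}.

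The main obstacle is the probabilistic step justifying that almost surely $C\cap W$ is either $\{0\}$ or a pointed cone that is not a linear subspace --- that is, ruling out the positive-probability occurrence of $C\cap W$ being a nontrivial linear subspace (which would make~\eqref{eq:sum-half=1/2} inapplicable). This requires knowing that a generic subspace $W$ of codimension $m$ intersects $C$ in something with no linear part beyond what is forced; if $C$ has lineality space $L$, one argues $W\cap L=\{0\}$ almost surely when $\dim L\le m$, and otherwise a dimension count shows the relevant configurations are non-generic. A secondary, more bookkeeping-type point is handling the boundary index $j=d-m$ and the parity shift between $m$ even and $m$ odd carefully in the final rewriting; this is routine but easy to get off by one.
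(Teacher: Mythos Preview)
Your proposal is correct and follows essentially the same route as the paper: both arguments use that $C\cap W$ is almost surely either $\{0\}$ or not a linear subspace, invoke Gauss--Bonnet (in the form~\eqref{eq:sum-half=1/2}) to write $\Prob[C\cap W\neq\{0\}]=2\,\IE\big[\sum_{j\text{ odd}}V_j(C\cap W)\big]$, apply the kinematic formula~\eqref{eq:random-inters} termwise, and then flip the tail sum using~\eqref{eq:sum-half=1/2} for $C$. The paper phrases the first step via the Euler characteristic $\chi(C\cap W\cap S^{d-1})$ rather than the odd-index sum, but this is only a cosmetic difference.
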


\begin{proof}
The Euler characteristic $\chi(C\cap W\cap S^{d-1})$ vanishes if $C\cap W=\{0\}$ and equals~$1$ otherwise, provided $C\cap W$ is not a linear subspace. Moreover, the intersection $C\cap W$ is almost surely not a linear subspace. Therefore, we may write the probability for the event $C\cap W\neq\{0\}$ as an expectation and apply the kinematic formula
  \[ \Prob\big[C\cap W\neq\{0\}\big] = \IE\big[\chi(C\cap W\cap S^{d-1})\big] \stackrel{\eqref{eq:Gauss-Bonnet}}{=} 2\cdot \sum_{j \text{ odd}} \IE\big[V_j(C\cap W)\big] \stackrel{\eqref{eq:random-inters}}{=} 2\cdot \sum_{j \text{ odd}} V_{m+j}(C) \; . \]
As the intrinsic volumes with even/odd indices add up to~$\frac{1}{2}$ by~\eqref{eq:sum-half=1/2}, we obtain
  \[ \Prob\big[C\cap W=\{0\}\big] = 1-2\cdot \sum_{j \text{ odd}} V_{m+j}(C) \stackrel{\eqref{eq:sum-half=1/2}}{=} 2\cdot \sum_{j \text{ odd}} V_{m-j}(C) \; . \qedhere \]
\end{proof}

\subsection{Statistical properties of (hCP)}\label{sec:hCP}

We introduce the following notation. A subset $K\subseteq S^{d-1}$ of the $(d-1)$th unit sphere is called \emph{spherically convex} iff it is of the form $K=C\cap S^{d-1}$, where $C\subseteq\IR^d$ is a closed convex cone. We define the dual set of~$K$ via $\breve{K}:=\breve{C}\cap S^{d-1}$. Furthermore, we denote the projection map onto~$K$ by
\begin{equation}\label{eq:def-Pi_K}
  \Pi_K \colon S^{d-1}\setminus\breve{K}\to K \;,\quad \Pi_K(p) := \|\Pi_C(p)\|^{-1}\cdot \Pi_C(p) \; ,
\end{equation}
where $\Pi_C$ denotes the canonical projection onto~$C$. The angle $d(p,q)=\arccos \langle p,q\rangle$ between $p,q\in S^{d-1}$ defines a metric on~$S^{d-1}$. We use the notation  $d(p,K):=\min\{d(p,q)\mid q\in K\}$. Note that the dual set $\breve{K}$ may be characterized as $\breve{K}=\{p\in S^{d-1}\mid d(p,K)\geq \frac{\pi}{2}\}$.

To simplify the notation we adopt the following convention: If we maximize a function~$f$ over a set~$M$, then we denote $\Argmax\{f(x)\mid x\in M\} := \{x\in M\mid f(x)=\sup\{f(y)\mid y\in M\}\}$. If the set $\Argmax\{f(x)\mid x\in M\}$ consists of a single element only, then we denote this element by $\argmax\{f(x)\mid x\in M\}$. Similarly for $\Argmin$ and $\argmin$. Note that for $v\in S^{d-1}\setminus\breve{K}$ we have
\begin{equation}\label{eq:proj-K-argmin}
  \Pi_K(v) = \argmin\{d(p,v)\mid p\in K\} = \argmax\{ \langle p,v\rangle \mid p\in K\} \; .
\end{equation}

We will see that the homogeneous case~\eqref{eq:hCP} is easily reformulated in such a way that the kinematic formula yields the proof of Theorem~\ref{thm:hCP}. The key observation is made in the next lemma, cf.~Figure~\ref{fig:argmax-proj}.

\begin{lemma}\label{lem:argmax-proj}
Let $C\subseteq\IR^d$ be a closed convex cone, $K:=C\cap S^{d-1}$, and let $B\subset\IR^d$ denote the closed unit ball. Then for $v\in S^{d-1}$
\begin{equation}\label{eq:argmax-proj}
  \argmax\{ \langle v,x\rangle \mid x\in C\cap B\} = \begin{cases} \Pi_K(v) & \text{if } v\not\in\breve{K} \\ 0 & \text{if } v\in\inter(\breve{K}) \; . \end{cases}
\end{equation}
\end{lemma}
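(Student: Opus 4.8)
The plan is to analyze the optimization problem $\max\{\langle v,x\rangle \mid x \in C\cap B\}$ directly, splitting into the two cases according to whether $v$ lies outside $\breve K$ or in its interior. The objective is linear and the feasible set $C\cap B$ is compact and convex, so a maximizer exists; since $C\cap B$ has nonempty interior relative to $\spa C$ (it contains a small ball around a generic point of $C$), the maximizer is unique whenever $v$ is not orthogonal to $\spa C$, and a short separate argument will handle the degenerate configurations. The heart of the matter is to identify the maximizer with $\Pi_K(v)$ in the first case.

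First I would treat the case $v \notin \breve K$. The key point is that the maximizer $x^\ast$ of a linear functional over $C\cap B$ must lie on the unit sphere: if $x^\ast \ne 0$ then scaling $x^\ast$ to norm one only increases $\langle v, x^\ast\rangle$ (using $\langle v, x^\ast\rangle > 0$, which holds because $v\notin\breve K$ guarantees $\sup_{x\in C}\langle v,x\rangle > 0$); and $x^\ast = 0$ cannot be optimal for the same reason. Hence $\max\{\langle v,x\rangle \mid x\in C\cap B\} = \max\{\langle v,x\rangle \mid x\in C, \|x\|=1\} = \max\{\langle v, p\rangle \mid p\in K\}$, and by~\eqref{eq:proj-K-argmin} the unique maximizer of the latter is $\Pi_K(v)$. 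I would need to check that the two $\argmax$ sets genuinely coincide (not just that the values agree), which follows since any maximizer over $C\cap B$ has norm one and thus lies in $K$.

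Next, the case $v \in \inter(\breve K)$. By definition of the dual cone, $\langle v, x\rangle \le 0$ for all $x\in C$, so $\langle v,x\rangle \le 0 = \langle v, 0\rangle$ on all of $C\cap B$, showing $0$ is a maximizer. For uniqueness I would use that $v$ lies in the \emph{interior} of $\breve K$ (equivalently $d(v,K) > \pi/2$, by the characterization of $\breve K$ recalled just before the lemma): this forces $\langle v, p\rangle < 0$ strictly for every $p\in K$, hence $\langle v, x\rangle < 0$ for every nonzero $x\in C$, so $0$ is the unique maximizer.

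I do not expect a serious obstacle here; the lemma is essentially a repackaging of the variational description~\eqref{eq:proj-K-argmin} of $\Pi_K$ together with the elementary observation that a linear functional on $C\cap B$ is optimized on the sphere (or at the origin). The one point requiring a little care is the reduction from "the optimal values agree" to "the $\Argmax$ sets agree" — i.e. verifying that every maximizer over the ball actually has unit norm in the first case and is exactly $0$ in the second — but this is handled by the strict-positivity/strict-negativity of $\langle v,\cdot\rangle$ noted above. A picture (Figure~\ref{fig:argmax-proj}) makes the statement transparent.
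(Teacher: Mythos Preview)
Your proof is correct and follows essentially the same approach as the paper's: split into the two cases, use $\langle v,p\rangle>0$ for some $p\in K$ (when $v\notin\breve K$) to reduce the maximum over $C\cap B$ to the maximum over $K$ and invoke~\eqref{eq:proj-K-argmin}, and use $\langle v,p\rangle<0$ for all $p\in K$ (when $v\in\inter(\breve K)$) to see that $0$ is the unique maximizer. The paper's version is terser but the logic is identical; your extra care in verifying that every maximizer over $C\cap B$ actually lies in $K$ (respectively equals $0$) is a welcome clarification of a step the paper leaves implicit.
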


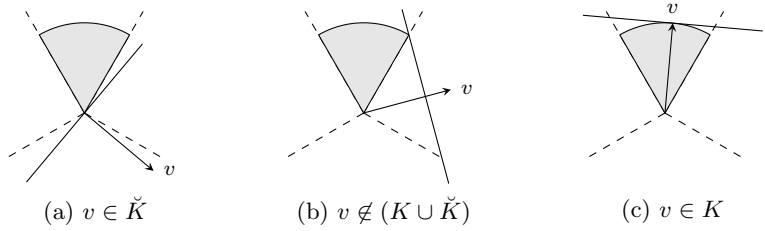
\begin{figure}
  \def\myalp{30}
  \centerline{
  \subfloat[$v\in\breve{K}$]{\begin{tikzpicture}[scale=1.2]
  \def\mybet{320};
  \def\myR{1};
  \draw[white] (-0.85,-0.8) rectangle (0.95,1.15);
  \draw[dashed] (0,0) -- (90-\myalp:1.3) (0,0) -- (90+\myalp:1.3)
                (0,0) -- (360-\myalp:1) (0,0) -- (180+\myalp:1);
  \fill[gray!20!white] (0,0) -- (90-\myalp:1) arc(90-\myalp:90+\myalp:1) -- cycle;
  \draw (0,0) -- (90-\myalp:1) arc(90-\myalp:90+\myalp:1) -- cycle;
  \draw[->,>=stealth] (0,0) -- (\mybet:\myR) node[right]{$\scriptstyle v$};
  \draw (\mybet-90:1) -- (\mybet+90:1);
  \end{tikzpicture}}
  \rule{12mm}{0mm}
  \subfloat[$v\not\in(K\cup\breve{K})$]{\begin{tikzpicture}[scale=1.2]
  \def\mybet{15};
  \def\myR{1};
  \pgfmathsetmacro\lenProj{sin(90-\myalp-\mybet)}
  \draw[white] (-0.85,-0.8) rectangle (0.95,1.15);
  \draw[dashed] (0,0) -- (90-\myalp:1.3) (0,0) -- (90+\myalp:1.3)
                (0,0) -- (360-\myalp:1) (0,0) -- (180+\myalp:1);
  \fill[gray!20!white] (0,0) -- (90-\myalp:1) arc(90-\myalp:90+\myalp:1) -- cycle;
  \draw (0,0) -- (90-\myalp:1) arc(90-\myalp:90+\myalp:1) -- cycle;
  \draw[->,>=stealth] (0,0) -- (\mybet:\myR) node[right]{$\scriptstyle v$};
  \draw (\mybet:\lenProj) ++(\mybet-90:1) -- ++(\mybet+90:2);
  \end{tikzpicture}}
  \rule{12mm}{0mm}
  \subfloat[$v\in K$]{\begin{tikzpicture}[scale=1.2]
  \def\mybet{85};
  \def\myR{1};
  \draw[white] (-0.85,-0.8) rectangle (0.95,1.15);
  \draw[dashed] (0,0) -- (90-\myalp:1.3) (0,0) -- (90+\myalp:1.3)
                (0,0) -- (360-\myalp:1) (0,0) -- (180+\myalp:1);
  \fill[gray!20!white] (0,0) -- (90-\myalp:1) arc(90-\myalp:90+\myalp:1) -- cycle;
  \draw (0,0) -- (90-\myalp:1) arc(90-\myalp:90+\myalp:1) -- cycle;
  \draw[->,>=stealth] (0,0) -- (\mybet:\myR) node[above]{$\scriptstyle v$};
  \draw (\mybet:1) ++(\mybet-90:1) -- ++(\mybet+90:2);
  \end{tikzpicture}}
  }
  \caption{An illustration of Lemma~\ref{lem:argmax-proj}.}
  \label{fig:argmax-proj}
\end{figure}

\begin{proof}
For $v\not\in\breve{K}$ there exists $p\in K$ such that $d(p,v)<\frac{\pi}{2}$, i.e., $\langle p,v\rangle>0$. It follows that
  \[ \Argmax\{ \langle v,x\rangle \mid x\in C\cap B\} = \Argmax\{ \langle v,p\rangle \mid p\in K\} \stackrel{\eqref{eq:proj-K-argmin}}{=} \{\Pi_K(v)\} \; . \]
On the other hand, if $v\in\inter(\breve{K})$, then $d(p,v)>\frac{\pi}{2}$ for all $p\in K$, i.e., $\langle p,v\rangle<0$ for all $p\in K$. It follows that in this case $\Argmax\{ \langle v,x\rangle \mid x\in C\cap B\} = \{0\}$.
%
\end{proof}

The problem~\eqref{eq:hCP} can now be phrased in the following form: We have the closed convex cone~$C$ in $d$-dimensional euclidean space~$\E$. This cone is intersected with the closed unit ball $B(\E):=\{x\in\E\mid \|x\|\leq1\}$ and with the linear subspace $W:=\{x\in\E\mid \langle a_1,x\rangle=\ldots=\langle a_m,x\rangle=0\}$. In other words, we have
  \[ \F(\hCP) = C\cap W\cap B(\E) \; . \]
If the $a_i$ are from the normal distribution $\N(\E)$ then $W$ has almost surely codimension~$m$, and~$W$ is uniformly distributed among all $(d-m)$-dimensional subspaces of $\E$. So we may assume w.l.o.g.~that $W$ is a uniformly random $(d-m)$-dimensional subspace of~$\E$.

We are now in a position to apply the kinematic formula.

\begin{proof}[Proof of Theorem~\ref{thm:hCP}]
The linear functional~$z$ to be minimized in~\eqref{eq:hCP} may be replaced by its orthogonal projection~$\bar{z}$ on $W$, as this does not change the value of the functional on~$W$. For fixed $W$ we thus obtain a conditional distribution for $\bar{z}$, which, by the well-known properties of the normal distribution, is again the normal distribution (on~$W$). Hence the probability that the origin is the solution of~\eqref{eq:hCP} is given in the following way
\begin{align*}
   \underset{a_1,\ldots,a_m,z}{\Prob} \left[\sol(\hCP)=0\right] & = \underset{W}{\Prob}\;\underset{\bar{z}}{\Prob}\big[\argmax\{ \langle\bar{z}, x\rangle \mid x\in W\cap C\cap B(\E)\}=0\big]
\\ & \stackrel{\eqref{eq:argmax-proj}}{=} \underset{W}{\Prob}\;\underset{\bar{z}}{\Prob}\big[\bar{z}\in \dual(W\cap C)\big] \stackrel{\eqref{eq:Phi_d-Phi_0}}{=} \underset{W}{\IE}\big[V_0(W\cap C)\big] \stackrel{\eqref{eq:random-inters-2}}{=} \sum_{j=0}^m V_j(C) \; ,
\end{align*}
which shows the first claim in~\eqref{eq:hCP-probs1}. As for the second claim in~\eqref{eq:hCP-probs1}, let $\Pi_{C_W}$ denote the projection onto $C_W:=C\cap W$. Then we obtain for $M\in\hmB(\E)$ such that $0\not\in M$
\begin{align*}
   \underset{a_1,\ldots,a_m,z}{\Prob} \big[\sol(\hCP)\in M\big] & = \underset{W}{\Prob}\;\underset{\bar{z}}{\Prob}\big[\argmax\{ \langle\bar{z}, x\rangle \mid x\in W\cap C\cap B(\E)\}\in M\big]
\\ & \stackrel{\eqref{eq:argmax-proj}}{=} \underset{W}{\Prob}\;\underset{\bar{z}}{\Prob}\big[\Pi_{C_W}(\bar{z})\in M\big] \; .
\end{align*}
For fixed~$W$ we obtain from Proposition~\ref{prop:facts-intrvol}\eqref{enum:prob-proj}
  \[ \underset{\bar{z}}{\Prob}\big[\Pi_{C_W}(\bar{z})\in M\big] = \sum_{j=1}^{d-m} \Phi_j(C\cap W,M) \; . \]
For random~$W$ we may apply the kinematic formula and obtain
\begin{align*}
   \underset{a_1,\ldots,a_m,z}{\Prob} \big[\sol(\hCP)\in M\big] & = \sum_{j=1}^{d-m} \underset{W}{\IE}\left[\Phi_j(C\cap W,M)\right] \stackrel{\eqref{eq:random-inters}}{=} \sum_{j=1}^{d-m} \Phi_{j+m}(C,M) \; ,
\end{align*}
which shows the second claim in~\eqref{eq:hCP-probs1}.

Finally, if the cone~$C$ is not a linear subspace, then by Corollary~\ref{cor:prob-CcapW=0}:
\begin{align*}
   \underset{a_1,\ldots,a_m}{\Prob} \; \left[\F(\hCP)=\{0\}\right] & = \underset{W}{\Prob} \; \big[C\cap W=\{0\}\big]
\\ & \stackrel{\eqref{eq:Prob[CcapW_nonempty]}}{=} 2\cdot \big(V_{m-1}(C)+V_{m-3}(C)+V_{m-5}(C)+\ldots\big) \; ,
\end{align*}
which is~\eqref{eq:hCP-probs2} and thus finishes the proof of Theorem~\ref{thm:hCP}.
\end{proof}

\subsection{Statistical properties of (CP)}\label{sec:CP}

The geometric interpretation of~\eqref{eq:CP}, which is suitable for applications of the kinematic formula, is slightly more complicated than in the homogeneous case~\eqref{eq:hCP}. The key observation is in the following lemma, which reduces the $d$-dimensional to the $2$-dimensional case.

\begin{lemma}\label{lem:red-2dim}
Let $v,w\in S^{d-1}$ be such that $\langle v,w\rangle=0$, and let $L:=\spa\{v,w\}$ denote the plane spanned by $v$ and $w$. Furthermore, let $\Pi_L\colon\IR^d\to L$ denote the orthogonal projection onto~$L$. Then for a closed convex cone $C\subseteq\IR^d$ we have
\begin{align*}
   \sup\{\langle v,x\rangle\mid x\in C \,,\; \langle w,x\rangle =1 \} & = \sup\{\langle v,x\rangle\mid x\in \Pi_L(C) \,,\; \langle w,x\rangle =1 \} \; ,
\\ \Argmax\{\langle v,x\rangle\mid x\in C \,,\; \langle w,x\rangle =1 \} & = C\cap \Pi_L^{-1}(\Argmax\{\langle v,x\rangle\mid x\in \Pi_L(C) \,,\;\langle w,x\rangle =1 \}) \; .
\end{align*}
\end{lemma}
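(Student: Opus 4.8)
The statement is essentially a "reduction to the plane" lemma: maximizing the linear functional $\langle v,\cdot\rangle$ over the slice $\{x\in C\mid \langle w,x\rangle=1\}$ only depends on the two coordinates of $x$ in the directions $v$ and $w$, so we should be able to push everything down to $L=\spa\{v,w\}$ via $\Pi_L$. The key algebraic facts are that $\Pi_L$ is linear, that for $u\in L$ and arbitrary $x$ we have $\langle u,x\rangle=\langle u,\Pi_L(x)\rangle$ (since $x-\Pi_L(x)\perp L$), and in particular $\langle v,x\rangle=\langle v,\Pi_L(x)\rangle$ and $\langle w,x\rangle=\langle w,\Pi_L(x)\rangle$, because $v,w\in L$.

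\begin{proof}
Write $\pi:=\Pi_L$. Since $v,w\in L$ and $x-\pi(x)\in L^\bot$ for every $x\in\IR^d$, we have $\langle v,x\rangle=\langle v,\pi(x)\rangle$ and $\langle w,x\rangle=\langle w,\pi(x)\rangle$ for all $x\in\IR^d$. Consequently, for $x\in C$ the constraint $\langle w,x\rangle=1$ is equivalent to $\langle w,\pi(x)\rangle=1$, and the objective value $\langle v,x\rangle$ equals $\langle v,\pi(x)\rangle$; since $\pi(x)\in\pi(C)$, this shows
\[
  \{\langle v,x\rangle\mid x\in C,\ \langle w,x\rangle=1\}\subseteq\{\langle v,y\rangle\mid y\in\pi(C),\ \langle w,y\rangle=1\}.
\]
For the reverse inclusion, let $y\in\pi(C)$ with $\langle w,y\rangle=1$; pick $x\in C$ with $\pi(x)=y$. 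Then $\langle w,x\rangle=\langle w,\pi(x)\rangle=\langle w,y\rangle=1$ and $\langle v,x\rangle=\langle v,\pi(x)\rangle=\langle v,y\rangle$, so the two sets of attainable objective values coincide. Taking suprema gives the first displayed equality of the lemma.

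For the second equality, call the common supremum $\sigma$ (possibly $+\infty$ or $-\infty$, in which case both Argmax sets are empty and the identity reads $\emptyset=\emptyset$; so assume $\sigma$ finite). If $x\in C$ with $\langle w,x\rangle=1$ and $\langle v,x\rangle=\sigma$, then $y:=\pi(x)\in\pi(C)$ satisfies $\langle w,y\rangle=1$ and $\langle v,y\rangle=\sigma$, so $y\in\Argmax\{\langle v,y\rangle\mid y\in\pi(C),\ \langle w,y\rangle=1\}$, whence $x\in C\cap\pi^{-1}(\Argmax\{\cdots\})$. Conversely, if $x\in C$ and $\pi(x)=y$ with $y\in\Argmax\{\langle v,y\rangle\mid y\in\pi(C),\ \langle w,y\rangle=1\}$, then $\langle w,x\rangle=\langle w,y\rangle=1$ and $\langle v,x\rangle=\langle v,y\rangle=\sigma$ by the first equality, so $x$ is feasible for the original problem and attains the value $\sigma$, i.e.\ $x\in\Argmax\{\langle v,x\rangle\mid x\in C,\ \langle w,x\rangle=1\}$. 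This proves the asserted identity of Argmax sets.
\end{proof}

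The only mild subtlety — and the thing worth flagging rather than a genuine obstacle — is the bookkeeping around the cases $\sigma=\pm\infty$ (infeasibility or unboundedness), where one should note that the suprema agree anyway because the two sets of attained objective values are literally equal, and the Argmax identity degenerates to $\emptyset=\emptyset$. Everything else is the elementary observation that $\langle v,\cdot\rangle$ and $\langle w,\cdot\rangle$ factor through $\pi=\Pi_L$ because $v,w\in L$, so no convexity of $C$ or closedness is actually needed for this particular lemma; those hypotheses matter only for the later use of $\Pi_L(C)$ as a $2$-dimensional convex cone to which the planar analysis and the kinematic formula are applied.
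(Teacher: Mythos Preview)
Your proof is correct and follows essentially the same approach as the paper: both rest on the observation that $\langle v,\cdot\rangle$ and $\langle w,\cdot\rangle$ factor through $\Pi_L$ because $v,w\in L$, so the sets of attainable objective values over $C$ and over $\Pi_L(C)$ coincide. You simply spell out the Argmax bookkeeping (including the degenerate cases $\sigma=\pm\infty$) more carefully than the paper, which dispatches the second claim with ``Analogously, we obtain the second claim.''
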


\begin{proof}
Let $x\in\IR^d$ be decomposed in $x=x_1+x_2$ with $x_1\in L$ and $x_2\in L^\bot$, i.e., $x_1=\Pi_L(x)$. Then we have $\langle v,x\rangle = \langle v,x_1\rangle + \langle v,x_2\rangle = \langle v,x_1\rangle$, and similarly $\langle w,x\rangle = \langle w,x_1\rangle$. This implies $\sup\{\langle v,x\rangle\mid x\in C \,,\; \langle w,x\rangle =1 \} = \sup\{\langle v,x_1\rangle\mid x_1\in \Pi_L(C) \,,\; \langle w,x_1\rangle =1 \}$. Analogously, we obtain the second claim.
\end{proof}

We now discuss the $2$-dimensional case. Let $v,w\in S^1$ with $\langle v,w\rangle=0$, i.e., the matrix with columns $v,w$ lies in $O(2)$. The orthogonal group $O(2)$ is isometric to the disjoint union $S^1\,\dot{\cup}\, S^1$ according to the two possible orientations \wv and \mbox{\vw.} To make this explicit we define the map
\begin{equation}\label{eq:isom-O(2)-S^1x(l,r)}
  \vp\colon O(2)\to S^1\times\{\ell,r\} \;,\qquad Q=(v,w)\mapsto \begin{cases} (v,\ell) & \text{if the orientation is \wv} \; , \\ (v,r) & \text{if the orientation is \vw} \; . \end{cases}
\end{equation}

In the following let $\bar{C}\subset\IR^2$ be a fixed closed convex cone, which is not a linear subspace, i.e., $\bar{C}$ is a wedge with an angle between $0$ and $\pi$. Figure~\ref{fig:aff-2dim} illustrates where the intersection of~$\bar{C}$ with the affine line $\{x\in\IR^2\mid \langle x,w\rangle=1\}$ achieves its maximum with respect to the linear functional defined by~$v$, where $(v,w)\in O(2)$ according to the orientation~\wv.

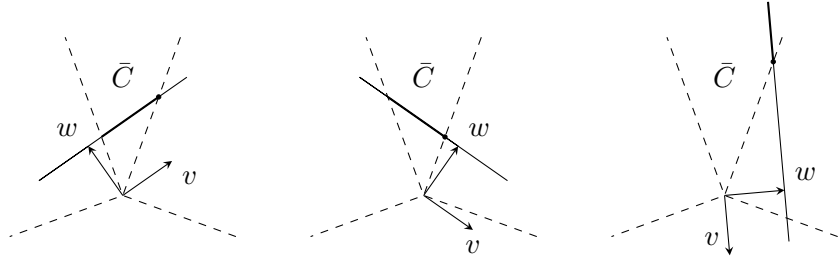
\begin{figure}
  \begin{center}
  \def\myalp{20}
  \begin{tikzpicture}[>=stealth, scale=0.8]
  \begin{scope}[xshift=-5cm]
  \def\mybet{35}
  \pgfmathsetmacro\lenA{1/cos(\mybet-\myalp)}
  \pgfmathsetmacro\lenB{1/cos(\mybet+\myalp)}
  \draw[dashed] (90-\myalp:2.8) -- (0,0) -- (90+\myalp:2.8)
        (360-\myalp:2) -- (0,0) -- (180+\myalp:2);
  \draw[->] (0,0) -- (\mybet:1) node[below right]{$v$};
  \draw[->] (0,0) -- (90+\mybet:1) node[above left]{$w$};
  \draw[thin] (90+\mybet:1) -- ++(\mybet:-1) -- ++(\mybet:3);
  \draw[thick] (90+\myalp:\lenA) -- (90-\myalp:\lenB);
  \filldraw (90-\myalp:\lenB) circle (1pt);
  \path (0,2) node{$\bar{C}$};
  \end{scope}
  \def\mybet{-35}
  \pgfmathsetmacro\lenA{1/cos(\mybet-\myalp)}
  \pgfmathsetmacro\lenB{1/cos(\mybet+\myalp)}
  \draw[dashed] (90-\myalp:2.8) -- (0,0) -- (90+\myalp:2.8)
        (360-\myalp:2) -- (0,0) -- (180+\myalp:2);
  \draw[->] (0,0) -- (\mybet:1) node[below]{$v$};
  \draw[->] (0,0) -- (90+\mybet:1) node[above right]{$w$};
  \draw[thin] (90+\mybet:1) -- ++(\mybet:-2) -- ++(\mybet:3);
  \draw[thick] (90+\myalp:\lenA) -- (90-\myalp:\lenB);
  \filldraw (90-\myalp:\lenB) circle (1pt);
  \path (0,2) node{$\bar{C}$};
  \begin{scope}[xshift=5cm]
  \def\mybet{-85}
  \pgfmathsetmacro\lenA{1/cos(\mybet-\myalp)}
  \pgfmathsetmacro\lenB{1/cos(\mybet+\myalp)}
  \draw[dashed] (90-\myalp:2.8) -- (0,0) -- (90+\myalp:2.8)
        (360-\myalp:2) -- (0,0) -- (180+\myalp:2);
  \draw[->] (0,0) -- (\mybet:1) node[above left]{$v$};
  \draw[->] (0,0) -- (90+\mybet:1) node[above right]{$w$};
  \draw[thin] (90-\myalp:\lenB) -- ++(\mybet:3);
  \draw[thick] (90-\myalp:\lenB) -- ++(\mybet:-1);
  \filldraw (90-\myalp:\lenB) circle (1pt);
  \path (0,2) node{$\bar{C}$};
  \end{scope}
  \end{tikzpicture}
  \end{center}
  \caption{The $2$-dimensional situation in the inhomogeneous case.}
  \label{fig:aff-2dim}
\end{figure}

Let the two rays forming the boundary of $\bar{C}$ be denoted by $R_1$ and $R_2$. Furthermore, depending on $v,w$, write 
\begin{equation}\label{eq:not-F,val,sol}
  \bar{\F} :=\{x\in \bar{C} \mid \langle w,x\rangle=1\} ,\;\; \ol{\val} := \sup\{\langle v,x\rangle \mid x\in \bar{\F}\} ,\;\; \ol{\Sol} := \Argmax\{\langle v,x\rangle \mid x\in \bar{\F}\} .
\end{equation}
Assuming that $v,w$ are random vectors, such that $(v,w)\in O(2)$ uniformly at random, it is easily seen that only four cases appear with positive probability: The intersection $\bar{\F}$ may be empty, the functional~$v$ may be unbounded on~$\bar{\F}$, or the solution set $\ol{\Sol}$ consists of a single point, which either lies in $R_1$ or in~$R_2$. In the latter case we again adopt the convention to denote the single point by $\ol{\sol}$. This case distinction corresponds to the decomposition of $O(2)\cong S^1 \times \{\ell,r\}$ indicated in Figure~\ref{fig:sol-aff-2dim}, where we use the isometry~$\vp$ defined in~\eqref{eq:isom-O(2)-S^1x(l,r)}.
%
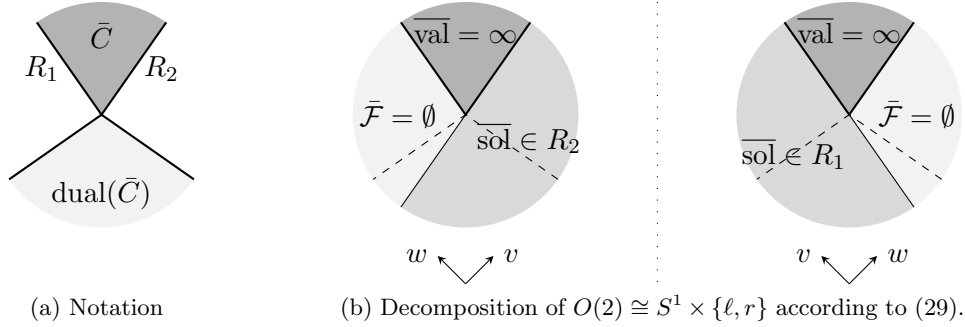
\begin{figure}
  \begin{center}
  \def\myscal{1.5}
  \def\myalp{35}
  \subfloat[Notation]{
  \begin{tikzpicture}[scale=\myscal,>=stealth]
  \draw[white] (0,1,) -- (0,-1.5);
  \fill[gray!60!white] (90-\myalp:1) -- (0,0) -- (90+\myalp:1) arc(90+\myalp:90-\myalp:1);
  \fill[gray!10!white] (360-\myalp:1) arc(360-\myalp:180+\myalp:1) -- (0,0) -- cycle;
  \draw[thick] (90-\myalp:1) -- (0,0) -- (90+\myalp:1)
        (360-\myalp:1) -- (0,0) -- (180+\myalp:1);
  \path (0,0.7) node{$\bar{C}$}
        (0,-0.7) node{$\dual(\bar{C})$}
        (90+\myalp:0.5) node[left]{$R_1$}
        (90-\myalp:0.5) node[right]{$R_2$};
  \end{tikzpicture}}
  \rule{18mm}{0mm}
  \subfloat[Decomposition of $O(2)\cong S^1\times\{\ell,r\}$ according to~\eqref{eq:not-F,val,sol}.]{
  \begin{tikzpicture}[scale=\myscal,>=stealth]
  \begin{scope}[xshift=-1.7cm]
  \begin{scope}[shift={(0,-1.5)}, scale=0.35]
    \draw[->] (0,0) -- (45:1) node[right]{$v$};
    \draw[->] (0,0) -- (135:1) node[left]{$w$};
  \end{scope}
  \fill[gray!60!white] (90-\myalp:1) -- (0,0) -- (90+\myalp:1) arc(90+\myalp:90-\myalp:1);
  \fill[gray!30!white] (90-\myalp:1) arc(90-\myalp:0:1) arc(360:270-\myalp:1) -- cycle;
  \fill[gray!10!white] (270-\myalp:1) arc(270-\myalp:90+\myalp:1) -- (0,0) -- cycle;
  \draw[thick] (90-\myalp:1) -- (0,0) -- (90+\myalp:1);
  \draw[dashed] (360-\myalp:1) -- (0,0) -- (180+\myalp:1);
  \draw[thin] (0,0) -- (270-\myalp:1);
  \path 
        (0,0.75) node{$\ol{\val}=\infty$}
        (360-\myalp+15:0.6) node{$\ol{\sol}\in R_2$}
        (180:0.6) node{$\bar{\F}=\emptyset$};
  \end{scope}
  \draw[loosely dotted] (0,1) -- (0,-1.5);
  \begin{scope}[xshift=1.7cm]
  \begin{scope}[shift={(0,-1.5)}, scale=0.35]
    \draw[->] (0,0) -- (45:1) node[right]{$w$};
    \draw[->] (0,0) -- (135:1) node[left]{$v$};
  \end{scope}
  \fill[gray!60!white] (90-\myalp:1) -- (0,0) -- (90+\myalp:1) arc(90+\myalp:90-\myalp:1);
  \fill[gray!30!white] (90+\myalp:1) arc(90+\myalp:270+\myalp:1) -- cycle;
  \fill[gray!10!white] (270+\myalp:1) arc(270+\myalp:360:1) arc(0:90-\myalp:1) -- (0,0) -- cycle;
  \draw[thick] (90-\myalp:1) -- (0,0) -- (90+\myalp:1);
  \draw[dashed] (360-\myalp:1) -- (0,0) -- (180+\myalp:1);
  \draw[thin] (0,0) -- (270+\myalp:1);
  \path 
        (0,0.75) node{$\ol{\val}=\infty$}
        (0:0.6) node{$\bar{\F}=\emptyset$}
        (215:0.6) node{$\ol{\sol}\in R_1$};
  \end{scope}
  \end{tikzpicture}}
  \end{center}
  \caption{Illustration of Lemma~\ref{lem:aff-d=2}.}
  \label{fig:sol-aff-2dim}
\end{figure}

We formulate the following lemma, which is checked easily, cf.~Figure~\ref{fig:sol-aff-2dim}.

\begin{lemma}\label{lem:aff-d=2}
Let $\bar{C}\subset\IR^2$ be a closed convex cone, which is not a linear subspace, and let $R_1$ and $R_2$ denote the two rays forming the boundary of~$\bar{C}$. Then, for uniformly random $(v,w)\in O(2)$, we have
\begin{equation}\label{eq:lem-2dim-empty,unbound}
  \Prob\big[\bar{\F}=\emptyset\big] = V_0(\bar{C}) \;,\qquad \Prob\Big[\ol{\val}=\infty\Big] = V_2(\bar{C}) \; ,
\end{equation}
where we use the notation from~\eqref{eq:not-F,val,sol}. Furthermore, for $M\in\hmB(\IR^2)$, we have
\begin{equation}\label{eq:lem-2dim-rank-pos,rank}
  \Prob\Big[\ol{\sol}\in M \Big] = \Phi_1(\bar{C},M) \;,\qquad \Prob\Big[\ol{\sol}\in M \text{ and } \ol{\val}>0\Big] = \tfrac{1}{2}\cdot \Phi_1(\bar{C},M) \; . \qed
\end{equation}
\end{lemma}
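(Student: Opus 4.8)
The plan is to reduce the whole statement to an explicit computation of arc lengths on $S^1$. Since the Haar measure on $O(2)$, the intrinsic volumes and the curvature measures are all invariant under rotations (Proposition~\ref{prop:facts-intrvol}), after a rotation we may assume $\bar C\subset\IR^2$ is the wedge of half-angle $\alpha\in(0,\tfrac\pi2)$ whose bisector is the positive vertical axis. Write $\rho_i\in S^1$ for the direction of $R_i$ and $\nu_i\in S^1$ for its outer unit normal; thus $\nu_i\perp\rho_i$, $\nu_i\notin\bar C$, and $\cone\{\nu_1,\nu_2\}=\dual(\bar C)$. Using the isometry $\vp$ of~\eqref{eq:isom-O(2)-S^1x(l,r)}, a uniformly random $(v,w)\in O(2)$ corresponds to a uniformly random $v\in S^1$ together with an independent fair coin deciding the orientation, $w$ being $v$ rotated by $+\tfrac\pi2$ or $-\tfrac\pi2$ accordingly; in particular $w$ alone is uniform on $S^1$.

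I would first dispose of~\eqref{eq:lem-2dim-empty,unbound}. One has $\ol{\F}\neq\emptyset$ exactly when $\sup_{x\in\bar C}\langle w,x\rangle>0$, i.e.\ when $w\notin\dual(\bar C)$, so $\Prob[\ol{\F}=\emptyset]=\rvol(\dual(\bar C)\cap S^1)=V_0(\bar C)$ by the remark after~\eqref{eq:V_j(C)-polyhdrl}. On $\{\ol{\F}\neq\emptyset\}$ the set $\ol{\F}$ is a segment or a ray inside the line $\{\langle w,\cdot\rangle=1\}$, with recession directions $\{tv:t\in\IR\}\cap\bar C$; since $\bar C$ is a wedge of angle $<\pi$, an increasing such direction exists iff $v\in\bar C$, so $\{\ol{\val}=\infty\}=\{\ol{\F}\neq\emptyset,\ v\in\bar C\}$. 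A short argument using that $v$ is interior shows $v\in\inter\bar C$ forces $w\notin\dual(\bar C)$ (otherwise $\langle w,v\pm\varepsilon w\rangle\le0$ for small $\varepsilon>0$ would give $\langle v,w\rangle<0$), so up to a null set $\{\ol{\val}=\infty\}=\{v\in\bar C\}$ and $\Prob[\ol{\val}=\infty]=\rvol(\bar C\cap S^1)=V_2(\bar C)$.

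Next consider the complementary event $E:=\{\ol{\F}\neq\emptyset,\ v\notin\bar C\}$, which by the previous step together with $\sum_jV_j(\bar C)=1$ and~\eqref{eq:sum-half=1/2} has probability $1-V_0(\bar C)-V_2(\bar C)=V_1(\bar C)=\tfrac12$. On $E$ the functional $\langle v,\cdot\rangle$ is bounded above on $\ol{\F}$ and non-constant along the line $\{\langle w,\cdot\rangle=1\}$ (whose direction is $v$, and $\langle v,v\rangle\neq0$), so its maximum over $\ol{\F}$ is attained at a single extreme point; almost surely the line misses the apex, hence $\ol{\sol}\in\relint R_1\cup\relint R_2$ and $E$ decomposes, up to null sets, into $A_i:=\{\ol{\sol}\in\relint R_i\}$. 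Reading off Figure~\ref{fig:sol-aff-2dim}: the optimality condition at $\ol{\sol}\in\relint R_i$ places $v$ in the sum of the normal ray $\IR_{\ge0}\nu_i$ of the active face and the normal line $\spa\{w\}$ of the equality constraint, which in $\IR^2$ amounts to $\langle v,\nu_i\rangle>0$; conversely one checks, orientation by orientation, that $\langle v,\nu_i\rangle>0$ together with $v\notin\bar C$ forces $\ol{\sol}\in\relint R_i$ for exactly one of the two orientations. Under $\vp$ this identifies $A_i$ with the open semicircle $\{v\in S^1\mid\langle v,\nu_i\rangle>0\}$ paired with one orientation, of probability $\tfrac1{2\pi}\cdot\pi\cdot\tfrac12=\tfrac14$. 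Since $M$ is conic and $\ol{\sol}\neq0$, on $A_i$ we have $\ol{\sol}\in M\iff\rho_i\in M$, whence $\Prob[\ol{\sol}\in M]=\tfrac14\bigl(\mathbbm{1}[\rho_1\in M]+\mathbbm{1}[\rho_2\in M]\bigr)$. On the other side, by~\eqref{eq:Phi_j(C,M)-polyhdrl} and since the set of $x$ with $\Pi_{\bar C}(x)\in\relint R_i$ is the planar sector $\{t\rho_i+s\nu_i\mid t>0,\ s\ge0\}$ of angular measure $\tfrac\pi2$ — on which $\Pi_{\bar C}(x)$ is a positive multiple of $\rho_i$ — one obtains the same value for $\Phi_1(\bar C,M)$, which is the first identity in~\eqref{eq:lem-2dim-rank-pos,rank}. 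For the second identity, on $A_i$ one has $\ol{\val}=\langle v,\ol{\sol}\rangle$ with $\ol{\sol}$ a positive multiple of $\rho_i$, so $\ol{\val}>0\iff\langle v,\rho_i\rangle>0$; the semicircle representing $A_i$ is centered at $\nu_i\perp\rho_i$, hence invariant under reflection in $\spa\{\nu_i\}$, which interchanges $\{\langle v,\rho_i\rangle>0\}$ with $\{\langle v,\rho_i\rangle<0\}$, so exactly half of $A_i$ satisfies $\ol{\val}>0$, yielding $\Prob[\ol{\sol}\in M\text{ and }\ol{\val}>0]=\tfrac12\,\Phi_1(\bar C,M)$.

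The step I expect to be the main obstacle is the case distinction in the third paragraph: matching, for each of the two orientations of $O(2)\cong S^1\times\{\ell,r\}$, the region $\{\ol{\sol}\in\relint R_i\}$ with the semicircle $\{\langle v,\nu_i\rangle>0\}$, and verifying that all degenerate configurations (the line $\{\langle w,\cdot\rangle=1\}$ through the apex, $v$ parallel to a boundary ray, ties between the two endpoints of $\ol{\F}$) form a null set. This is what is hidden behind ``checked easily, cf.~Figure~\ref{fig:sol-aff-2dim}''; everything else is bookkeeping with arc lengths.
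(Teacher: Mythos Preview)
Your proof is correct and is essentially a careful write-up of exactly what the paper leaves implicit: the lemma is stated with a \qed and the remark ``is checked easily, cf.~Figure~\ref{fig:sol-aff-2dim}'', so there is no detailed argument to compare against beyond the picture. Your identification of the region $A_i$ with the open semicircle $\{v:\langle v,\nu_i\rangle>0\}$ paired with one orientation matches precisely the decomposition drawn in Figure~\ref{fig:sol-aff-2dim}, and your reflection argument for the $\ol{\val}>0$ half is a clean way to extract the second identity in~\eqref{eq:lem-2dim-rank-pos,rank}. One small remark: the KKT-style necessary condition $\langle v,\nu_i\rangle>0$ alone is not sufficient to pin down $A_i$ (for instance, for orientation $\ell$ the semicircle $\{\langle v,\nu_1\rangle>0\}$ overlaps both the $\bar\F=\emptyset$ region and $A_2$), so the identification really does rest on the direct orientation-by-orientation check you flag as the ``main obstacle''; but that check is exactly what the figure encodes and is straightforward once the regions are written down as arcs.
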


As in the homogeneous case, we will now bring the problem~\eqref{eq:CP} into a geometric form where we can apply the kinematic formula. This requires a few more steps than for~\eqref{eq:hCP}.

In addition to $W:=\{x\in\E\mid \langle a_1, x\rangle=\ldots=\langle a_m, x\rangle=0\}$ we introduce the following notation
  \[ W_\aff := \{x\in\E\mid \langle a_1, x\rangle=b_1,\ldots,\langle a_m, x\rangle=b_m\} \;,\qquad \wh{W} := \spa(W_\aff) \; . \]
For normal distributed $a_1,\ldots,a_m,b_1,\ldots,b_m$, the set $W_\aff$ is almost surely an affine space of codimension~$m$ and its linear hull $\wh{W}$ is a linear space of codimension~$m-1$. Moreover, the probability distribution of the $a_i$ and $b_i$ induce for $\wh{W}$ a uniform distribution on the Grassmann manifold of subspaces of $\E$ with codimension~$m-1$.

The affine space $W_\aff$ has almost surely a positive \emph{height} $\min\{\|x\|\mid x\in W_\aff\}$. We define the normalization $W_\aff^\circ$ of $W_\aff$, which has unit height, via
  \[ W_\aff^\circ=h^{-1}\cdot W_\aff \;,\qquad h=\min\{\|x\|\mid x\in W_\aff\} \; . \]
Additionally, we define the point $w\in W_\aff^\circ$ by the property $W_\aff^\circ\cap S(\wh{W})=\{w\}$.
So we have $W=\wh{W}\cap w^\bot$ and $W_\aff^\circ=w+W$. See Figure~\ref{fig:ilstr_aff} for an illustration of these definitions.
\begin{figure}
  \def\myalp{140}
  \centerline{\begin{tikzpicture}[scale=1]
  \filldraw (0,0) circle(1pt) node[below right]{$0$};
  \draw (0,0) circle(1cm);
  \filldraw (\myalp:1) circle(1.5pt) node[below right]{$w$};
  \draw (\myalp:1.8) -- ++(\myalp+90:1.5) -- ++(\myalp-90:3) node[left]{$W_\aff$};
  \draw (\myalp:1) -- ++(\myalp+90:1.5) -- ++(\myalp-90:3) node[right]{$W_\aff^\circ$};
  \draw (0,0) -- ++(\myalp+90:1.5) -- ++(\myalp-90:3) node[below right]{$W$};
  \path (2.5,1) node{$\wh{W}$}
        (0.9,-0.2) node[right]{$S(\wh{W})$};
  \end{tikzpicture}}
  \caption{An illustration of the geometric situation.}
  \label{fig:ilstr_aff}
\end{figure}
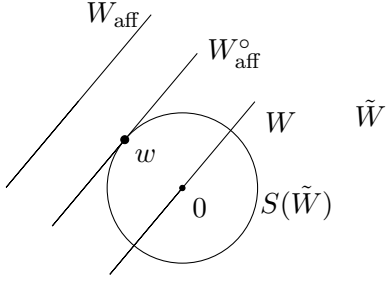

As we are not interested in the specific value of~\eqref{eq:CP} (provided it is $<\infty$) but only where the maximum is attained, we may consider $W_\aff^\circ$ instead of $W_\aff$, i.e., instead of~\eqref{eq:CP} we consider
\begin{equation}\label{eq:alt-SDP-1}
  \text{maximize} \quad z\cdot x \qquad \text{s.t.} \quad x\in C\cap\wh{W} \,,\; \langle w, x\rangle =1 \; .
\end{equation}
It is easily seen that for fixed $\wh{W}$ the induced distribution of $w$ is the uniform distribution on~$S(\wh{W})$.

Without loss of generality, we may replace the functional~$z$ by its orthogonal projection~$\bar{z}$ on~$W$. For fixed~$W$ the induced distribution of~$\bar{z}$ is the normal distribution on~$W$. As~$\bar{z}$ is almost surely nonzero, we may define the normalization $v:=\|\bar{z}\|^{-1}\cdot \bar{z}\in S(W)$. Finally, we denote the plane spanned by $v,w$ by $L:=\spa\{v,w\}$.

We can generate the distribution of $(\tilde{W},L,v,w)$ induced by the standard normal distributed $a_1,\ldots,a_m,b_1,\ldots,b_m$ in the following way:
\begin{enumerate}
  \item choose a uniformly random subspace $\wh{W}$ of $\E$ of codimension $m-1$,
  \item choose a plane $L\subseteq\tilde{W}$ uniformly at random,
  \item choose $v\in S(L)$ uniformly at random,
  \item choose $w$ as one of the points in $S(L)\cap v^\bot$, each with probability~$\frac{1}{2}$.
\end{enumerate}

We may now proceed to the proof of Theorem~\ref{thm:CP}.

\begin{proof}[Proof of Theorem~\ref{thm:CP}]
Lemma~\ref{lem:red-2dim} tells us that instead of~\eqref{eq:alt-SDP-1} we may consider the following problem in the $2$-dimensional plane~$L$
\begin{equation}\label{eq:alt-SDP-2}
  \text{maximize} \quad \langle v, x\rangle \qquad \text{s.t.} \quad x\in\Pi_L(C\cap\wh{W}) \,,\; \langle w, x\rangle =1 \; .
\end{equation}
More precisely, using the notation of~\eqref{eq:not-F,val,sol}
\begin{align*}
   \bar{C} & := \Pi_L(C\cap\wh{W}) \;, & \bar{\F} & :=\{x\in \bar{C} \mid \langle w, x\rangle=1\} \; ,
\\ \ol{\val} & := \sup\{ \langle v, x\rangle \mid x\in \bar{\F}\} \;, & \ol{\Sol} & := \Argmax\{ \langle v, x\rangle \mid x\in \bar{\F}\} ,
\end{align*}
we obtain from Lemma~\ref{lem:red-2dim} that \eqref{eq:CP} is infeasible iff $\bar{\F}=\emptyset$, \eqref{eq:CP} is unbounded iff $\ol{\val}=\infty$, and
\begin{equation}\label{eq:CP--2-dim}
  \Sol(\CP) = C\cap \Pi_L^{-1}(\ol{\Sol}) \; . 
\end{equation}
We thus obtain by Lemma~\ref{lem:aff-d=2}
  \[ \underset{\substack{a_1,\ldots,a_m \\ b_1,\ldots,b_m}}{\Prob} [\CP \text{ infeasible}] = \underset{\wh{W},L}{\Prob}\;\underset{v,w}{\Prob} \big[\bar{\F}=\emptyset\big] \stackrel{\eqref{eq:lem-2dim-empty,unbound}}{=} \underset{\wh{W},L}{\IE}\big[V_0(\Pi_L(C\cap\wh{W}))\big] \; . \]
Applying the kinematic formula twice yields (recall that the codimension of $\tilde{W}$ is $m-1$)
  \[ \underset{\wh{W},L}{\IE}\big[V_0(\Pi_L(C\cap\wh{W}))\big] \stackrel{\eqref{eq:random-proj}}{=} \underset{\wh{W}}{\IE}\big[V_0(C\cap\wh{W})\big] \stackrel{\eqref{eq:random-inters-2}}{=} V_0(C)+V_1(C)+\ldots+V_{m-1}(C) \; , \]
which proves the first claim in~\eqref{eq:CP-probs1}. Analogously, we obtain
\begin{align*}
   \underset{\substack{a_1,\ldots,a_m,z\\b_1,\ldots,b_m}}{\Prob} [\CP & \text{ unbounded}] = \underset{\wh{W},L}{\Prob}\;\underset{v,w}{\Prob} \big[\ol{\val}=\infty\big] \stackrel{\eqref{eq:lem-2dim-empty,unbound}}{=} \underset{\wh{W},L}{\IE}\big[V_2(\Pi_L(C\cap\wh{W}))\big]
\\ & \stackrel{\eqref{eq:random-proj-2}}{=} \underset{\wh{W}}{\IE}\big[V_2(C\cap\wh{W})+V_3(C\cap\wh{W})+\ldots+V_{d-m+1}(C\cap\wh{W})\big]
\\ & \stackrel{\eqref{eq:random-inters}}{=} V_{m+1}(C)+V_{m+2}(C)+\ldots+V_d(C) \; ,
\end{align*}
which proves the second claim in~\eqref{eq:CP-probs1}.

As for the claim~\eqref{eq:CP-probs2}, we have for $M\in\hmB(\E)$ (cf.~\eqref{eq:CP--2-dim} above)
  \[ \big(\sol(\CP)\in M\cap\wh{W} \text{ and } \val(\CP)>0\big) \iff \big(\ol{\sol}\in \Pi_L(M\cap\wh{W}) \text{ and } \ol{\val}>0 \big) \; . \]
This yields
\begin{align*}
   \underset{\substack{a_1,\ldots,a_m,z\\b_1,\ldots,b_m}}{\Prob} [\sol(\CP)\in M & \text{ and } \val(\CP)>0] = \underset{\wh{W},L}{\Prob}\;\underset{v,w}{\Prob} \Big[\ol{\sol}\in \Pi_L(M\cap\wh{W}) \text{ and } \ol{\val}>0\Big]
\\ & \stackrel{\eqref{eq:lem-2dim-rank-pos,rank}}{=} \underset{\wh{W},L}{\IE}\big[\tfrac{1}{2}\cdot \Phi_1(\Pi_L(C\cap\wh{W}),\Pi_L(M\cap\wh{W}))\big] \; .
\end{align*}
Applying the kinematic formula twice finally yields (recall $\codim \tilde{W}=m-1$)
\begin{align*}
   \underset{\wh{W},L}{\IE}\big[\tfrac{1}{2}\cdot \Phi_1(\Pi_L(C\cap\wh{W}),\Pi_L(M\cap\wh{W}))\big] & \stackrel{\eqref{eq:random-proj}}{=} \tfrac{1}{2}\cdot \underset{\wh{W}}{\IE}\big[\Phi_1(C\cap\wh{W},M\cap\wh{W})\big] \stackrel{\eqref{eq:random-inters}}{=} \tfrac{1}{2}\cdot \Phi_m(C,M) \; .
\end{align*}
Analogous arguments yield the claim without the positivity assumption.
\end{proof}

\section{Main result}\label{sec:main_res}

In this section we will formulate our main result, which are closed formulas for the curvature measures of the cone of positive semidefinite matrices over~$\IR/\IC/\IH$ evaluated at the set of rank~$r$ matrices. We first give the definition of intrinsic volumes and curvature measures of arbitrary closed convex cones in Section~\ref{sec:spher-intr-vol}. Section~\ref{sec:Mehta-and-related} is devoted to Mehta's and related integrals. The notation we establish here will allow us to formulate our main result in Section~\ref{sec:form-Phi_j(be,n,r)}. In Section~\ref{sec:ex-small-dims} we will give some examples in small dimensions.

\subsection{Spherical intrinsic volumes}\label{sec:spher-intr-vol}

Central to the definition of the intrinsic volumes and the curvature measures is the notion of the (local) tube around a spherically convex set $K\subseteq S^{d-1}$. For a spherical Borel set $M^s\in\mB(S^{d-1})$ we define the (local) tube around $K$ (in $M^s$) of radius $\alpha\in[0,\pi/2)$ via
\begin{align*}
   \T(K,\alpha) & := \{ p\in S^{d-1}\mid d(p,K)\leq \alpha\} \; ,
\\ \T(K,\alpha;M^s) & := \{ p\in \T(K,\alpha)\mid \Pi_K(p)\in M^s\} \; ,
\end{align*}
where $\Pi_K$ denotes the spherical projection map, cf.~\eqref{eq:def-Pi_K}. We oppress the dependence on $S^{d-1}$ to keep the notation simple. Recall that we denote the $(d-1)$-dimensional normalized Hausdorff volume on $S^{d-1}$ by~$\rvol$, cf.~\eqref{eq:def-rvol}.
%

The following proposition forms the basis for the general definition of the curvature measures and the intrinsic volumes. For a proof see for example~\cite{H:43,alle:48,S:50,K:91,Gl}.

\begin{proposition}\label{prop:rvol(T(K,a;M))}
Let $K\subseteq S^{d-1}$ be a spherically convex set. Then there exist nonnegative measures $\Phi_0(K,.),\Phi_1(K,.),\ldots,\Phi_{d-1}(K,.)\colon\mB(S^{d-1})\to\IR_+$ such that for $0\leq\alpha<\pi/2$ and $M^s\in\mB(S^{d-1})$
\begin{equation}\label{eq:rvol(T(K,a;M))}
  \rvol \T(K,\alpha;M^s) = \Phi_{d-1}(K,M^s) + \sum_{j=1}^{d-1} \Phi_{j-1}(K,M^s)\cdot \rvol \T(S^{j-1},\alpha) \; .
\end{equation}
Furthermore, if $K_1,K_2,\ldots\subseteq S^{d-1}$ is a sequence of spherically convex sets, which converges to~$K$ in the Hausdorff metric, then $\lim_{\ell\to\infty} \Phi_{j-1}(K_\ell,M^s)\to \Phi_{j-1}(K,M^s)$ for all $M^s\in\mB(S^{d-1})$.
\end{proposition}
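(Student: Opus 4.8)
The plan is to prove the formula \eqref{eq:rvol(T(K,a;M))} first for spherically convex \emph{polytopes} — that is, for $K=C\cap S^{d-1}$ with $C\subseteq\IR^d$ a polyhedral cone — where everything can be made explicit, and then to pass to general spherically convex $K$ by Hausdorff approximation, reading off the coefficient measures by a linear-algebra argument.

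\emph{Polyhedral case.} I would use the facial decomposition of $\IR^d$ attached to $\Pi_C$: for a face $F\in\F_j$ with $\spa F$ of dimension $j$, one has $\Pi_C(x)\in\relint F$ iff $x\in\relint F\oplus N_F$, i.e.\ $\Pi_{\spa F}(x)\in\relint F$ and $\Pi_{(\spa F)^\bot}(x)\in N_F$, where $N_F\subseteq(\spa F)^\bot$ is the normal cone of $C$ along $F$. Restricting to $S^{d-1}$ this gives the disjoint decomposition $\T(K,\alpha;M^s)=\dot{\bigcup}_{j}\dot{\bigcup}_{F\in\F_j}\{p\in S^{d-1}\mid d(p,K)\le\alpha,\ \Pi_K(p)\in\relint F\cap M^s\}$. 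For $p=p_F+p_\bot$ in the piece over $F$ one computes $\Pi_K(p)=p_F/\|p_F\|$ and $d(p,K)=\arccos\|p_F\|$, so writing $p_F=\cos\theta\,\omega$ ($\omega\in\relint F\cap S(\spa F)$) and $p_\bot=\sin\theta\,\eta$ ($\eta\in N_F\cap S((\spa F)^\bot)$), the spherical volume element factors as a warped product $\cos^{j-1}\theta\,\sin^{d-j-1}\theta\,d\theta\,d\omega\,d\eta$ (up to a universal constant). Integrating, the contribution of $F$ equals $\rvol_{S(\spa F)}(\relint F\cap M^s)\cdot\rvol_{S((\spa F)^\bot)}(N_F\cap S^{d-1})\cdot c_j(\alpha)$, where $c_j(\alpha)$ depends only on $d,j,\alpha$. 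Defining $\Phi_{j-1}(K,M^s):=\sum_{F\in\F_j}\rvol_{S(\spa F)}(\relint F\cap M^s)\cdot\rvol_{S((\spa F)^\bot)}(N_F\cap S^{d-1})$ for $1\le j\le d-1$ and $\Phi_{d-1}(K,M^s):=\rvol(K\cap M^s)$ visibly yields nonnegative Borel measures, and the decomposition becomes \eqref{eq:rvol(T(K,a;M))} once we identify $c_j(\alpha)=\rvol\T(S^{j-1},\alpha)$; that identification comes from specializing $K$ to a great subsphere $S(V)$ with $\dim V=j$, which has a single face, forcing $c_j(\alpha)=\rvol\T(S(V),\alpha;S(V))=\rvol\T(S^{j-1},\alpha)$ by orthogonal invariance.

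\emph{General case.} Every spherically convex $K$ is a Hausdorff limit of spherical polytopes $K_\ell$ (intersect $K$ with shrinking polytopal neighborhoods, or take spherical hulls of finite nets in $K$). The measures $\Phi_j(K_\ell,\cdot)$ have total mass at most $1$, so along a subsequence they converge weakly to measures $\Phi_j(K,\cdot)$. For a radius $\alpha<\pi/2$ with $\rvol(\partial\T(K,\alpha))=0$ and $M^s$ a continuity set of the $\Phi_j(K,\cdot)$, the left-hand side $\rvol\T(K_\ell,\alpha;M^s)\to\rvol\T(K,\alpha;M^s)$ (continuity of $\Pi_K$ off the dual set plus dominated convergence), so \eqref{eq:rvol(T(K,a;M))} passes to the limit for such $\alpha,M^s$. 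Since the functions $\alpha\mapsto\rvol\T(S^{j-1},\alpha)$, $j=1,\dots,d$, are linearly independent on $[0,\pi/2)$ — a Vandermonde/induction argument on their explicit incomplete-Beta form — the coefficients $\Phi_j(K,M^s)$ are uniquely determined by the tube volumes, hence independent of the approximating sequence; a monotonicity/continuity argument then extends \eqref{eq:rvol(T(K,a;M))} to all $\alpha<\pi/2$ and all $M^s\in\mB(S^{d-1})$, and the asserted Hausdorff continuity $\Phi_{j-1}(K_\ell,M^s)\to\Phi_{j-1}(K,M^s)$ is exactly what the argument produces.

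\emph{Main obstacle.} The delicate part is the analytic content of the limiting step: ensuring $\rvol\T(K_\ell,\alpha;M^s)\to\rvol\T(K,\alpha;M^s)$ uniformly enough in $\alpha$ to recover \emph{all} coefficients, and that the weak limits are genuine locally finite, countably additive measures rather than merely finitely additive set functions. The clean way to handle this — the route of \cite{Gl} — is to parametrize the boundary $\{q\mid d(q,K)=\alpha,\ \Pi_K(q)\in M^s\}$ by the generalized spherical normal bundle $\mathrm{Nor}(K)\subseteq S^{d-1}\times S^{d-1}$ via $q=\cos\alpha\,p+\sin\alpha\,u$, note that $\mathrm{Nor}(K)$ is a compact Lipschitz $(d-2)$-manifold with principal curvatures defined almost everywhere (Federer's rectifiability results), expand the Jacobian as a polynomial in $\cos\alpha,\sin\alpha$ with coefficients the elementary symmetric functions of those curvatures, and integrate in $\alpha$; the $\Phi_j(K,\cdot)$ then arise as pushforwards of these curvature integrals and their Hausdorff continuity follows from that of the normal bundles. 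I would cite \cite{Gl} (and \cite{S:50,K:91}) for the full execution and only indicate this structure here.
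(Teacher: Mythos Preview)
The paper does not actually prove this proposition: immediately before the statement it writes ``For a proof see for example~\cite{H:43,alle:48,S:50,K:91,Gl}'' and then moves on to use the result as a black box for Definition~\ref{def:Phi_j(C,M),V_j(C)}. So there is no ``paper's own proof'' to compare against; the authors treat Proposition~\ref{prop:rvol(T(K,a;M))} as a known foundational fact from the literature on spherical Steiner formulas.

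Your sketch is in line with how those references proceed --- polyhedral case via the Moreau/facial decomposition, then approximation --- and you correctly identify the genuine analytic difficulty (passing to the limit in the localized tube formula and recovering countably additive measures). Your final paragraph, deferring the rigorous execution to the normal-bundle/generalized-curvature approach of~\cite{Gl}, is exactly the move the paper itself makes by citing~\cite{Gl}. One caution: your claimed convergence $\Phi_{j-1}(K_\ell,M^s)\to\Phi_{j-1}(K,M^s)$ for \emph{every} Borel set $M^s$ is stronger than what weak convergence of measures gives you, and indeed the proposition as stated (with convergence for arbitrary $M^s$) is somewhat optimistic --- the cited sources typically prove weak convergence of the $\Phi_j$, which is what your argument actually delivers. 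But since the paper does not prove the proposition either, this is a quibble with the statement rather than with your proposal.
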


\begin{definition}\label{def:Phi_j(C,M),V_j(C)}
Let $C\subseteq\IR^d$ be a closed convex cone, and let $K=C\cap S^{d-1}$ be the corresponding spherically convex set. For $1\leq j\leq d$, the function $\Phi_{j-1}(K,.)$ from Proposition~\ref{prop:rvol(T(K,a;M))} is called the $(j-1)$th \emph{curvature measure of $K$}. The \emph{intrinsic volumes of $K$} are defined by
  \[ V_{j-1}(K) := \Phi_{j-1}(K,S^{d-1}) \; . \]

Let the conic $\sigma$-algebra $\hmB(\IR^d)$ be defined as in Section~\ref{sec:intrvol-polyhedr}. For $1\leq j\leq d$ the $j$th \emph{curvature measure of~$C$} is the functional $\Phi_j(C,.)\colon\hmB(\IR^d)\to\IR_+$, which is given by
  \[ \Phi_j(C,M) := \Phi_{j-1}(C\cap S^{d-1},M\cap S^{d-1}) \; . \]
The curvature measure $\Phi_0(C,.)\colon\hmB(\IR^d)\to\IR_+$ is defined by the scaled Dirac measure
  \[ \Phi_0(C,M) := \begin{cases} \rvol(\breve{C}\cap S^{d-1}) & \text{if } 0\in M \; , \\ 0 & \text{if } 0\not\in M \; . \end{cases} \]
The \emph{intrinsic volumes of~$C$} are defined by
  \[ V_j(C) := \Phi_j(C,C) \;,\quad j=0,\ldots,d \; . \]
\end{definition}

\begin{remark}\label{rem:index-shift}
Note that we introduce a shift in the index of the curvature measures/intrinsic volumes when passing between the cone and its intersection with the unit sphere. We do this for several reasons. First of all, the spherical notation is established this way at several places~\cite{Gl,Gl:Summ,HG:02,am:thesis}. Second, if interpreted correctly, $V_j(C)$ and $V_{j-1}(K)$ may indeed be seen as volumes of sets with ``dimension'' $j$ and $j-1$, respectively. And last but not least, we have made the experience that the formulas get nicer when working with the shifted index for the cones, cp.~the kinematic formulas in Section~\ref{sec:kinem-form} and in Section~\ref{sec:kinem-form-supp-meas} in the appendix.
\end{remark}

\subsection{Mehta's and related integrals}\label{sec:Mehta-and-related}

For $z=(z_1,\ldots,z_n)$ we denote the Vandermonde determinant by $\Delta(z) := \prod_{1\leq i<j\leq n} (z_i-z_j)$. For $0\leq r\leq n$ let $x:=(z_1,\ldots,z_r)$ and $y:=(z_{r+1},\ldots,z_n)$, so that $z=(x,y)$. We have the decomposition
\begin{equation}\label{eq:decomp-Vanderm}
  \Delta(z)^\be = \Delta(x)^\be\cdot \Delta(y)^\be\cdot \prod_{i=1}^r \prod_{j=1}^{n-r} (x_i-y_j)^\be \; .
\end{equation}
We regard the rightmost factor in~\eqref{eq:decomp-Vanderm} as a polynomial in~$x$ and decompose it into its homogeneous parts. For convenience, we change the sign, and define
  \[ f_{\be,k}(x;y) := \bigg( \text{the $x$-homogeneous part of } \prod_{i=1}^r \prod_{j=1}^{n-r} (x_i+y_j)^\be \text{ of degree } k\bigg) \; . \]
We can write $f_{\be,k}(x;y)$ in an explicit form if we rearrange
  \[ \prod_{i=1}^r \prod_{j=1}^{n-r} (x_i+y_j)^\be = \prod_{i=1}^r \prod_{j=1}^{n-r} \big(\tfrac{x_i}{y_j}+1\big)^\be \cdot \prod_{j=1}^{n-r} y_j^{\be r} \; . \]
Denoting by $\sigma_k$ the $k$th elementary symmetric function, we obtain
\begin{equation}\label{eq:f_(b,k)(x;y)-expl}
  f_{\be,k}(x;y) = \sigma_k\left((x\otimes y^{-1})^{\times \be}\right)\cdot \prod_{j=1}^{n-r} y_j^{\be r} \; ,
\end{equation}
where $(x\otimes y^{-1})^{\times \be} = \big(\underbrace{x\otimes y^{-1},\ldots,x\otimes y^{-1}}_{\be\text{-times}}\big)$, and
\begin{equation}\label{eq:xotimesy}
  x\otimes y^{-1} := \left(\frac{x_1}{y_1},\ldots,\frac{x_r}{y_1},\frac{x_1}{y_2},\ldots,\frac{x_r}{y_2} \,,\ldots,\, \frac{x_1}{y_{n-r}},\ldots,\frac{x_r}{y_{n-r}}\right)\in\IR^{r(n-r)} \; .
\end{equation}
We thus have a decomposition of the $\be$th power of the Vandermonde determinant into
\begin{equation}\label{eq:decomp-Vanderm-f}
  \Delta(z)^\be = \Delta(x)^\be\cdot \Delta(y)^\be\cdot \sum_{k=0}^{\be r(n-r)} f_{\be,k}(x;-y) \; ,
\end{equation}
where $z=(x,y)$.

\begin{definition}
We define for $0\leq r\leq n$ and $0\leq k\leq \be r(n-r)$ the integrals
\begin{equation}\label{eq:def-J}
  J_\be(n,r,k) := \frac{1}{(2\pi)^{n/2}}\cdot \underset{z\in\IR^n_+}{\int} e^{-\frac{\|z\|^2}{2}} \cdot |\Delta(x)|^\be\cdot |\Delta(y)|^\be\cdot f_{\be,k}(x;y) \;dz \; ,
\end{equation}
where $z=(x,y)$ with $x\in\IR^r$, $y\in\IR^{n-r}$, and $\IR_+^n$ denotes the positive orthant in~$\IR^n$. We set $J_\be(n,r,k) := 0$, if $k<0$ or $k>\be r(n-r)$.
\end{definition}

Note that for $n=1$ we have
\begin{equation}\label{eq:J_b(1,0,0)=J_b(1,1,0)=...}
  J_\be(1,0,0) = J_\be(1,1,0) = \tfrac{1}{2} \; .
\end{equation}
Note also that exchanging the roles of~$x$ and~$y$ yields the following symmetry relation
\begin{equation}\label{eq:symm-J}
  J_\be(n,r,k) = J_\be(n,n-r,\be r(n-r)-k) \; .
\end{equation}

For $r\in\{0,n\}$ and $k=0$ we obtain the integrand $e^{-\frac{\|z\|^2}{2}} \cdot |\Delta(z)|^\be$, which also appears in \emph{Mehta's integral}
\begin{equation}\label{eq:F_n(b/2)}
  F_n(\be/2) := \frac{1}{(2\pi)^{n/2}}\cdot \underset{z\in\IR^n}{\int} e^{-\frac{\|z\|^2}{2}} \cdot |\Delta(z)|^\be \;dz  = \prod_{j=1}^n \frac{\Gamma(1+\frac{j\be}{2})}{\Gamma(1+\frac{\be}{2})} = n!\cdot \prod_{j=1}^n \frac{\Gamma(\frac{j\be}{2})}{\Gamma(\frac{\be}{2})}
\end{equation}
(cf.~\cite{FW:08} and the references therein).

It is well-known that the distribution of the joint probability density function for the eigenvalues of matrices from the $\GbE$ is given by (cf.~\cite{FW:08} and the references therein)
\begin{equation}\label{eq:distr-eval-GbE}
  \frac{1}{(2\pi)^{n/2}\cdot F_n(\be/2)}\cdot e^{-\frac{\|z\|^2}{2}} \cdot |\Delta(z)|^\be \; .
\end{equation}
Using this, we may write the $J$-integral as an expectation
\begin{equation}\label{eq:J-expect}
  J_\be(n,r,k) = F_r(\be/2)\cdot F_{n-r}(\be/2)\cdot \underset{\substack{A\in\GbE(r)\\B\in\GbE(n-r)}}{\IE}\big[1_+(A)\cdot 1_+(B)\cdot f_{\beta,k}(A;B)\big] \; ,
\end{equation}
where $1_+(A):=1$ if $A\seq0$ and $1_+(A):=0$ if $A\not\seq0$, and $f_{\beta,k}(A;B)$ is the value of $f_{\beta,k}$ in the eigenvalues of $A$ and $B$.

If we denote
  \[ F_n^+(\be/2) := J_\be(n,0,0) = \frac{1}{(2\pi)^{n/2}}\cdot \underset{z\in\IR_+^n}{\int} e^{-\frac{\|z\|^2}{2}} \cdot |\Delta(z)|^\be \;dz \; , \]
then we have $F_1^+(\be/2)=\frac{1}{2}$, and for $n=2,3$ and $\be=1,2,4$
\begin{equation}\label{eq:F_3^+(b)}
  F_n^+(\be/2) = \; \begin{array}{|c||c|c|c|}
  \hline \raisebox{-2mm}{\rule{0mm}{6mm}}\tikz[baseline=1.5mm, xscale=0.8, yscale=0.9]{
  \draw (0,0.5) node[below=3.5mm, right=0mm]{$n$} -- (1,0) node[above=3.5mm, left=0mm]{$\be$};
  } & 1 & 2 & 4
\\\hline & & & \\[-4.6mm]\hline\rule{0mm}{5mm}2 & \frac{1}{\sqrt{\pi}}\Big(1-\frac{\sqrt{2}}{2}\Big) & \frac{1}{2}-\frac{1}{\pi} & 3-\frac{8}{\pi}
\\[2mm]\hline\rule{0mm}{5mm} 3 & \frac{1}{\sqrt{\pi}}\Big(\frac{3}{4}-\frac{3\sqrt{2}}{2\pi}\Big) & \frac{3}{2}-\frac{9}{2\pi} & 540-\frac{1692}{\pi}
\\\hline
\end{array} \; .
\end{equation}
In fact, the values for $F_2^+(\be/2)$ are easily computed with any computer algebra system; we obtained the values for $F_3^+(\be/2)$ differently (cf.~Section~\ref{sec:ex-small-dims} for the details). As for the asymptotics, it is shown in~\cite{DM:06} that for $n\to\infty$
  \[ F_n^+(\be/2) = \Theta\left(\exp\left(-n^2\cdot \frac{\be \ln 3}{4}\right)\right) \; . \]

\subsection{Formulas for the curvature measures of the rank \texorpdfstring{$r$}{r} stratum}\label{sec:form-Phi_j(be,n,r)}

Recall the values $\Phi_j(\be,n,r)$ of the $j$th curvature measures of~$\Cbn$ evaluated at the set of its rank~$r$ matrices, cf.~\eqref{eq:def-Phi_j(b,n,r)}. The following theorem is a main result of this paper.

\begin{thm}\label{thm:main}
Let $\be\in\{1,2,4\}$, $n\in\IN$, and $0\leq r\leq n$. The curvature measures of the semidefinite cone $\C_{\be,n}$ evaluated at the set of rank $r$ matrices, cf.~\eqref{eq:def-C_(b,n)}--\eqref{eq:def-Phi_j(b,n,r)}, are given by
\begin{equation}\label{eq:Phi_j(C,M)}
  \Phi_j(\be,n,r) = \binom{n}{r}\cdot \frac{J_\be(n,r, j - d_{\be,r})}{F_n(\beta/2)} \; ,
\end{equation}
where $0\leq j\leq d_{\be,n}$, and $d_{\be,n}$, $J_\be(n,r,k)$, $F_n(\be/2)$ are defined in~\eqref{eq:def-d_(b,n)},~\eqref{eq:symm-J}, and~\eqref{eq:F_n(b/2)}, respectively.
\end{thm}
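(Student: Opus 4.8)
The plan is to compute the curvature measure $\Phi_j(\Cbn, \Wbnr)$ directly from its defining property in Proposition~\ref{prop:rvol(T(K,a;M))}, i.e.\ by computing the volume of a (local) tube around the spherical cone $K = \Cbn \cap S^{d_{\be,n}-1}$ restricted to the stratum of rank~$r$ matrices, and then reading off the coefficient in the polynomial expansion in $\rvol\T(S^{j-1},\alpha)$. The first step is to parametrize a neighborhood of the rank~$r$ stratum $\Wbnr$ inside $\Herbn$ using the spectral decomposition: a matrix near $\Wbnr$ is written as $A = U D U^\dagger$ where $U$ runs over the relevant (real/complex/quaternionic) Stiefel manifold and $D = \diag(\lambda_1,\dots,\lambda_n)$ has $r$ "large" positive eigenvalues and $n-r$ "small" eigenvalues of either sign; the normal space to $\Wbnr$ at a rank-$r$ point and the projection $\Pi_{\Cbn}$ onto the cone both decompose nicely in these coordinates. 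This is where the factor $\binom{n}{r}$ enters (choice of which eigenvalues are the big ones, together with the symmetry of the eigenvalue ordering) and where the Jacobian of the spectral map produces the Vandermonde-type factor $|\Delta(z)|^\be$ with the split $|\Delta(x)|^\be |\Delta(y)|^\be \prod_{i,j}|x_i - y_j|^\be$ of~\eqref{eq:decomp-Vanderm}.

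Second, I would compute the tube volume. The key geometric fact is that the spherical projection $\Pi_K$ sends a point $A = U\,\diag(x,y)\,U^\dagger$ (with $x \in \IR^r$ the positive block and $y \in \IR^{n-r}$) to (the normalization of) $U\,\diag(x,0)\,U^\dagger$, so the "base" variables are $(U, x)$ and the tube/fibre variables are $y$ together with the off-diagonal directions mixing the big and small blocks — exactly $\be r(n-r)$ of the latter. Integrating the Gaussian $e^{-\|z\|^2/2}$ against the Jacobian over the fibre, and keeping track of how the off-diagonal normal coordinates scale with the tube radius $\alpha$, produces precisely the homogeneous components $f_{\be,k}(x;-y)$ from~\eqref{eq:decomp-Vanderm-f}: the degree-$k$ $x$-homogeneous part corresponds to the term proportional to $\rvol\T(S^{j-1},\alpha)$ for the appropriate $j$. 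Matching degrees gives the index shift $k = j - d_{\be,r}$ (the dimension $d_{\be,r}$ of the rank-$r$ "core" being the offset between the stratum dimension and the curvature-measure index). After this bookkeeping the remaining integral over $(U,x,y)$ factors: the $U$-integral over the Stiefel manifold contributes a ratio of volumes which, combined with the normalization constants from Mehta's integral~\eqref{eq:F_n(b/2)} and the definition~\eqref{eq:J-expect} of $J_\be(n,r,k)$ as a GOE/GUE/GSE expectation, collapses to $\binom{n}{r}\, J_\be(n,r,j-d_{\be,r})/F_n(\be/2)$.

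I would organize the write-up by first doing the case $r = n$ (the interior stratum, where $\Phi_{d_{\be,n}}(\Cbn,\cdot) = \rvol(\Cbn \cap \cdot \cap S^{d-1})$ directly, recovering $J_\be(n,n,0)/F_n(\be/2) = F_n^+(\be/2)/F_n(\be/2)$ as the normalized solid angle via~\eqref{eq:distr-eval-GbE}), then the case $r=0$ (the apex, handled by self-duality~\eqref{eq:V_j(Cbn)=V_(d-j)(Cbn)} and the symmetry relation~\eqref{eq:symm-J}), and finally the general stratum, reusing the $r=n$ computation on the "big" block and the $r=0$ computation on the "small" block via the product structure of the normal cone. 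Sanity checks: summing over $r$ via~\eqref{eq:decomp-V_j(Cbn)} should reproduce $\sum_k f_{\be,k} = $ the full Vandermonde power, giving $\sum_{j} V_j(\Cbn) = 1$; the symmetry~\eqref{eq:symm-J} should match the self-duality~\eqref{eq:V_j(Cbn)=V_(d-j)(Cbn)}; and $n=1$ should give $V_0 = V_1 = 1/2$ from~\eqref{eq:J_b(1,0,0)=J_b(1,1,0)=...}.

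The main obstacle I anticipate is the precise computation of the Jacobian of the "tube map" near the singular stratum $\Wbnr$ — that is, correctly identifying the normal directions to $\Wbnr$ in $\Herbn$, separating those that are tangent to the cone boundary from those transverse to it, and tracking the $\alpha$-dependence of each so that the expansion~\eqref{eq:rvol(T(K,a;M))} is matched term by term. This requires a careful analysis of how $\Pi_{\Cbn}$ acts on a perturbation $A + \varepsilon H$ of a rank-$r$ matrix (via second-order perturbation theory of eigenvalues/eigenvectors), and it is here that the factor $\prod_{i,j}(x_i - y_j)^\be$ and its homogeneous decomposition genuinely arise rather than being put in by hand. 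Everything after that is bookkeeping with Gaussian integrals and Stiefel-manifold volume ratios.
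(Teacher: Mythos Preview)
Your overall strategy---spectral parametrization, Vandermonde Jacobian, the decomposition of $\prod(x_i-y_j)^\be$ into homogeneous pieces $f_{\be,k}$, and Gaussian integration---is exactly what drives the paper's argument. The organizational difference is that the paper does not compute $\rvol\T(K,\alpha;M^s)$ from scratch and extract coefficients in~$\alpha$; instead it invokes a generalized Weyl tube formula for stratified cones (Theorem~\ref{thm:form-intr-vol}), which expresses $\Phi_j(C,M_i)$ directly as an integral of the $(d_i-j-1)$th elementary symmetric function of the principal curvatures of the stratum~$M_i$, over~$M_i$ and over the spherical normal cone~$N_p^S(C)$. The work then reduces to (a) showing $\Cbn\cap S(\Herbn)$ is stratifiable with essential strata $M_{n,1^{(r)}}$ (Proposition~\ref{prop:strat-struct}), (b) computing the Normal Jacobian of the spectral map~$\vp_r$ (Proposition~\ref{prop:phi_(n,r)}), and (c) computing the principal curvatures, which turn out to be $\mu_i/\lambda_j$ each with multiplicity~$\be$, plus zeros (Proposition~\ref{prop:princ-curv-M_m}). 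The symmetric function of these curvatures is $\sigma_k((\lambda^{-1}\otimes\mu)^{\times\be})$, which after the rewriting~\eqref{eq:rewr-integr} becomes $f_{\be,k}$; coarea and the spherical-to-Gaussian conversion (Lemma~\ref{lem:integr-gauss-kernel}) finish the job. Your ``second-order perturbation'' step is precisely step~(c), so you have correctly located the crux.

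One geometric point in your outline is off and would cause trouble in a direct tube computation: the $\be r(n-r)$ ``off-diagonal directions mixing the big and small blocks'' are \emph{tangent} to the stratum~$\Wbnr$, not tube/fibre variables. The normal cone to~$\Cbn$ at a rank-$r$ point sits entirely in the $(n-r)\times(n-r)$ block (Proposition~\ref{prop:faces-SDP-cone}), with dimension $d_{\be,n-r}$; this is the fibre. The off-diagonal directions are where the nonzero principal curvatures $\mu_i/\lambda_j$ live---so your instinct that $\prod(x_i+y_j)^\be$ and its homogeneous pieces arise from them is correct, but they enter through the curvature integrand of the Weyl formula, not as normal coordinates in the tube parametrization.
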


Note that the intrinsic volumes of the cone $\Cbn$, are given by $V_j(\C_{\be,n}) = \sum_{r=0}^n \Phi_j(\be,n,r)$, cf.~\eqref{eq:decomp-V_j(Cbn)}. Using the expression of $J_\be(n,r,k)$ from~\eqref{eq:J-expect}, we may alternatively write the curvature measures in the form
\begin{equation}\label{eq:Phi_j(C,M)-expect}
  \Phi_j(\be,n,r) = \binom{n}{r}\cdot \frac{F_r(\be/2)\cdot F_{n-r}(\be/2)}{F_n(\beta/2)} \cdot \underset{\substack{A\in\GbE(r)\\B\in\GbE(n-r)}}{\IE}\big[1_+(A)\cdot 1_+(B)\cdot f_{\beta,j-d_{\be,r}}(A;B)\big] \; .
\end{equation}

\begin{remark}\label{rem:connect-algdeg}
\begin{enumerate}
  \item The integral $J_\be(n,r,k)$ is nonzero iff $0\leq k\leq \be r(n-r)$. This implies
  \begin{equation}\label{eq:Pat-ineq_Phi}
     \Phi_j(\be,n,r)>0 \iff d_{\be,r}\leq j\leq d_{\be,r}+\be r(n-r) \; .
  \end{equation}
        One can deduce the implication ``$\Rightarrow$'' also from Corollary~\ref{cor:SDP}, which provides a different characterization of~$\Phi_j(\be,n,r)$. Namely, it is well-known (at least in the real case $\be=1$), that a random instance
 of~\eqref{eq:SDP} almost surely satisfies $d_{\be,r}\leq m\leq d_{\be,r}+\be r(n-r)$, where $r$ denotes the rank of the solution of~\eqref{eq:SDP}. These inequalities are known as \emph{Pataki's inequalities}, cf.~\cite{AHO:97,Pa:00,NRS:09}.
  \item Note that the relation $J_\be(n,r,k) = J_\be(n,n-r,\be r(n-r)-k)$, cf.~\eqref{eq:symm-J}, implies the symmetry
  \begin{equation}\label{eq:symm-Phi_j}
    \Phi_j(\be,n,r) = \Phi_{d_{\be,n}-j}(\be,n,n-r) \; .
  \end{equation}
        This is a refinement of the duality relation $V_j(\C_{\be,n})=V_{d_{\be,n}-j}(\C_{\be,n})$, which follows from the self-duality of~$\Cbn$, cf.~Section~\ref{sec:intrvol-polyhedr}.
  \item Both of the above properties of $\Phi_j(\be,n,r)$ also hold for the \emph{algebraic degree of semidefinite programming}, cf.~\cite[Prop.~9]{NRS:09}. We conjecture a deeper reason for this coincidence, which should be interesting to explore further.
\end{enumerate}
\end{remark}

%

\begin{remark}\label{rem:completeness}
With Theorem~\ref{thm:main} we have formulas for the intrinsic volumes of almost all symmetric cones: Recall that the characterization theorem of symmetric cones says that every symmetric cone is a direct product of Lorentz cones $\mcL^n:=\{x\in\IR^n\mid x_n\geq (x_1^2+\ldots+x_{n-1}^2)^{1/2}\}$, the cones $\C_{\be,n}$, and the exceptional $27$-dimensional cone of positive semidefinite $(3\times3)$-matrices over the octonions, cf.~\cite{FK:94}. The intrinsic volumes of $\mcL^n$ can be shown to be (cf.~for example~\cite[Ex.~2.15]{ambu:11b})
\begin{align*}
   V_j(\mcL^n) & = \frac{\binom{(n-2)/2}{(j-1)/2}}{2^{n/2}} \;,\quad \text{for $1\leq j\leq n-1$} \; ,
\\ V_0(\mcL^n) = V_n(\mcL^n) & = \frac{\binom{(n-2)/2}{-1/2}}{2^{n/2}}\cdot 
       \HG\left(1,\tfrac{1}{2};\tfrac{n+1}{2};-1\right) \quad 
       \left[ \sim \frac{\binom{(n-2)/2}{-1/2}}{2^{n/2}} \;,
       \text{ for $n\to\infty$}\right] \; .
\end{align*}
where $\binom{x}{y}=\frac{\Gamma(x+1)}{\Gamma(y+1)\cdot \Gamma(x-y+1)}$, and $\HG$ denotes the ordinary hypergeometric function (cf.~\cite[Ch.~15]{AbrSteg}). Furthermore, we have for direct products the simple convolution rule stated in Proposition~\ref{prop:facts-intrvol}\eqref{enum:prod-rule}. With Theorem~\ref{thm:main} we have thus formulas for the intrinsic volumes of all symmetric cones, which do not have the exceptional $27$-dimensional cone as one of its components.
\end{remark}

It is interesting to see how the formulas for the curvature measures in Theorem~\ref{thm:main} fit into the well-known framework of random matrices from the Gaussian Orthogonal/Unitary/Symplectic Ensemble. 
In the following remark we provide such a connection, which may be interpreted as a ``sanity check'' of the formula for $\Phi_j(\be,n,r)$ given in the main Theorem~\ref{thm:main}.

\begin{remark}
It is easily seen that the projection map $\Pi_{\Cbn}\colon\Herbn\to\Cbn$ simply replaces the negative eigenvalues of a matrix $A\in\Herbn$ by~$0$. It follows that a full rank matrix $A\in\Herbn$ has exactly $r$ positive eigenvalues iff the projection $\Pi_{\Cbn}(A)$ lies in $\Wbnr$, cf.~\eqref{eq:decomp-C-rank}. Using Proposition~\ref{prop:facts-intrvol}\eqref{enum:prob-proj}, we obtain that the probability that a matrix~$A$ from the $\GbE$ has exactly $r$ positive eigenvalues is given by
\begin{equation}\label{eq:prob-r-pos-eval}
  \Prob\big[ A \text{ has exactly $r$ positive eigenvalues}\big] = \sum_{j=0}^{d_{\be,n}} \Phi_j(\be,n,r) \; .
\end{equation}
On the other hand, using the formula~\eqref{eq:distr-eval-GbE} for the distribution of the joint probability density function for the eigenvalues of matrices from the $\GbE$, we obtain
\begin{equation}\label{eq:Prob[pos/neg_evals]}
  \Prob\big[ A \text{ has exactly $r$ pos.~eigenvalues}\big] = \frac{1}{(2\pi)^{n/2}\cdot F_n(\be/2)}\cdot \binom{n}{r} \cdot \underset{\IR_+^r\times \IR_-^{n-r}}{\int} e^{-\frac{\|z\|^2}{2}} \cdot |\Delta(z)|^\be\;dz ,
\end{equation}
where $\IR_-^k:=-\IR_+^k$. And indeed,~\eqref{eq:Prob[pos/neg_evals]} coincides with~\eqref{eq:prob-r-pos-eval}, as is seen by the following computation
\begin{align*}
   \eqref{eq:Prob[pos/neg_evals]} & \stackrel{\eqref{eq:decomp-Vanderm-f}}{=} \frac{1}{(2\pi)^{n/2}\cdot F_n(\be/2)}\cdot \binom{n}{r} \cdot \underset{\IR_+^r\times \IR_-^{n-r}}{\int} e^{-\frac{\|z\|^2}{2}} \cdot \bigg|\Delta(x)^\be\cdot \Delta(y)^\be\cdot \sum_{j=0}^{\be r(n-r)} f_{\be,j}(x;-y)\bigg|\;dz
\\ & = \frac{1}{(2\pi)^{n/2}\cdot F_n(\be/2)}\cdot \binom{n}{r} \cdot \underset{\IR_+^n}{\int} e^{-\frac{\|z\|^2}{2}} \cdot |\Delta(x)|^\be\cdot |\Delta(y)|^\be\cdot \sum_{j=0}^{\be r(n-r)} f_{\be,j}(x;y) \;dz
\\ & \stackrel{\eqref{eq:def-J}}{=} \sum_{j=0}^{\be r(n-r)} \binom{n}{r} \cdot \frac{J_\be(n,r,j)}{F_n(\be/2)} = \sum_{j=0}^{d_{\be,n}} \binom{n}{r} \cdot \frac{J_\be(n,r,j-d_{\be,r})}{F_n(\be/2)} \stackrel{\eqref{eq:Phi_j(C,M)}}{=} \sum_{j=0}^{d_{\be,n}} \Phi_j(\be,n,r) \; .
\end{align*}
\end{remark}

\subsection{Examples in small dimensions}\label{sec:ex-small-dims}

In this section we give the values of the curvature measures $\Phi_j(\be,n,r)$
and the intrinsic volumes of $\C_{\be,n}$ for dimension $n=1,2,3$.

The case $n=1$ is of course trivial, and we only mention it for the sake of completeness: we have $\Phi_0(\be,1,0)=V_0(\C_{\be,1})=\Phi_1(\be,1,1)=V_1(\C_{\be,1})=\frac{1}{2}$, cf.~\eqref{eq:J_b(1,0,0)=J_b(1,1,0)=...}. The remaining values for $n=1$ are zero.

As for the nontrivial cases $n=2,3$, recall that we have the symmetry relations 
$\Phi_j(\be,n,r) = \Phi_{d_{\be,n}-j}(\be,n,n-r)$ and $V_j(C_{\be,n}) = V_{d_{\be,n}-j}(C_{\be,n})$, 
cf.~\eqref{eq:symm-Phi_j}.
This halves the number of values we have to determine. For $n=2$ the $J$-integrals are easily computed with any computer algebra system. We obtain for the curvature measures
\begin{align*}
   \textstyle \Phi_0(1,2,0) & \textstyle = \frac{1}{2}-\frac{\sqrt{2}}{4} \;, & \textstyle \Phi_1(1,2,1) & \textstyle = \frac{\sqrt{2}}{4}
\\ \textstyle \Phi_0(2,2,0) & \textstyle = \frac{1}{4}-\frac{1}{2\pi} \;, & \textstyle \Phi_1(2,2,1) & \textstyle = \frac{1}{4} \;, & \textstyle \Phi_2(2,2,1) & \textstyle = \frac{1}{\pi}
\\ \textstyle \Phi_0(4,2,0) & \textstyle = \frac{1}{4}-\frac{2}{3\pi} \;, & \textstyle \Phi_1(4,2,1) & \textstyle = \frac{1}{8} \;, & \textstyle \Phi_2(4,2,1) & \textstyle = \frac{2}{3\pi} \;, & \textstyle \Phi_3(4,2,1) & \textstyle = \frac{1}{4}
\end{align*}
(the remaining values for $n=2$ are either~$0$, cf.~\eqref{eq:Pat-ineq_Phi}, or they are obtained from the above values via the symmetry~\eqref{eq:symm-Phi_j}).

As for the case $n=3$, the $J$-integrals $J_\be(3,r,k)$ for $r\in\{1,2\}$ are easily computed with any computer algebra system. We then obtain the value $J_\be(3,0,0)=J_\be(3,3,0)$ by first computing $V_1(C_{\be,3}),\ldots,V_{d_{\be,3}-1}(C_{\be,3})$, which only depend on $J_\be(3,r,k)$ for $r\in\{1,2\}$. By the self-duality of~$\Cbn$ we have $V_0(C_{\be,3})=V_{d_{\be,3}}(C_{\be,3})$. Combining this with Proposition~\ref{prop:facts-intrvol}\eqref{enum:ivol-sum=1} we obtain $2 V_0(C_{\be,3}) = 1-(V_1(C_{\be,3})-V_2(C_{\be,3})-\ldots-V_{d_{\be,3}-1}(C_{\be,3}))$. From Theorem~\ref{thm:main} we thus obtain
\begin{align*}
   J_\be(3,0,0) & = F_3(\be/2)\cdot V_0(C_{\be,3})
\\ & = F_3(\be/2)\cdot \tfrac{1}{2}\cdot \big(1-V_1(C_{\be,3})-V_2(C_{\be,3})-\ldots-V_{d_{\be,3}-1}(C_{\be,3})\big) \; .
\end{align*}
The resulting values of the curvature measures are
  \[ \textstyle \Phi_0(1,3,0) = \frac{1}{4}-\frac{\sqrt{2}}{2\pi} \;,\qquad \Phi_0(2,3,0) = \frac{1}{8}-\frac{3}{8\pi} \;,\qquad \Phi_0(4,3,0) = \frac{1}{8}-\frac{47}{120\pi} \; , \]
and the values 
$\Phi_j(\be,3,1)$ are given in 
Table~\ref{tab:Phi_j(b,3,1)}. (The remaining values for $n=3$ are obtained via the symmetry~\eqref{eq:symm-Phi_j}).
%
\begin{table}
  \[ \begin{array}{c||c|c|c|c|c|c|c|c|c}
        \tikz[baseline=1.5mm]{\draw (0,0.5) node[below=4mm, right=0mm]{$\be$} -- (1,0) node[above=4mm, left=0mm]{$j$};} & 1 & 2 & 3 & 4 & 5 & 6 & 7 & 8 & 9
     \\\hline & & & & & & & & & \\[-4.6mm]\hline\rule{0mm}{5mm}1 & \frac{\sqrt{2}}{4}-\frac{1}{4} & \frac{\sqrt{2}}{2\pi} & \frac{1}{2}-\frac{\sqrt{2}}{4} & \scriptstyle0 & \scriptstyle0 & \scriptstyle0 & \scriptstyle0 & \scriptstyle0 & \scriptstyle0
     \\[2mm]\hline\rule{0mm}{5mm} 2 & \frac{3}{16}-\frac{1}{2\pi} & \frac{1}{4\pi} & \frac{1}{2\pi} & \frac{1}{2\pi} & \frac{3}{16}-\frac{3}{8\pi} & \scriptstyle0 & \scriptstyle0 & \scriptstyle0 & \scriptstyle0
     \\[2mm]\hline\rule{0mm}{5mm} 4 & \frac{11}{64}-\frac{8}{15\pi} & \frac{1}{40\pi} & \frac{4}{15\pi}-\frac{1}{16} & \frac{19}{120\pi} & \frac{3}{32} & \frac{2}{5\pi} & \frac{1}{16}+\frac{1}{6\pi} & \frac{1}{5\pi} & \frac{7}{64}-\frac{7}{24\pi}
     \end{array}\]
  \caption{The values of $\Phi_j(\be,3,1)$.}
  \label{tab:Phi_j(b,3,1)}
\end{table}
%

The values of the intrinsic volumes of $\C_{\be,n}$, $n=1,2,3$, are summarized in Table~\ref{tab:V(C_(b,n))}.
\begin{table}
  \[ \begin{array}{c||c|c|c|c|c|c|c|c|c}
        & V_0 & V_1 & V_2 & V_3 & V_4 & V_5 & V_6 & V_7 & V_8
     \\\hline & & & & & & & & & \\[-4.6mm]\hline\rule{0mm}{5mm}
        \C_{\be,1} & \frac{1}{2} & \frac{1}{2} & \scriptstyle0 & \scriptstyle0 & \scriptstyle0 & \scriptstyle0 & \scriptstyle0 & \scriptstyle0 & \scriptstyle0
     \\[2mm]\hline\rule{0mm}{5mm}
        \C_{1,2} & \frac{1}{2}-\frac{\sqrt{2}}{4} & \frac{\sqrt{2}}{4} & \frac{\sqrt{2}}{4} & \frac{1}{2}-\frac{\sqrt{2}}{4} & \scriptstyle0 & \scriptstyle0 & \scriptstyle0 & \scriptstyle0 & \scriptstyle0
     \\[2mm]\hline\rule{0mm}{5mm}
        \C_{2,2} & \frac{1}{4}-\frac{1}{2\pi} & \frac{1}{4} & \frac{1}{\pi} & \frac{1}{4} & \frac{1}{4}-\frac{1}{2\pi} & \scriptstyle0 & \scriptstyle0 & \scriptstyle0 & \scriptstyle0
     \\[2mm]\hline\rule{0mm}{5mm}
        \C_{4,2} & \frac{1}{4}-\frac{2}{3 \pi} & \frac{1}{8} & \frac{2}{3\pi} & \frac{1}{4} & \frac{2}{3\pi} & \frac{1}{8} & \frac{1}{4}-\frac{2}{3 \pi} & \scriptstyle0 & \scriptstyle0
     \\[2mm]\hline\rule{0mm}{5mm}
        \C_{1,3} & \frac{1}{4}-\frac{\sqrt{2}}{2\pi} & \frac{\sqrt{2}}{4}-\frac{1}{4} & \frac{\sqrt{2}}{2\pi} & 1-\frac{\sqrt{2}}{2} & \frac{\sqrt{2}}{2\pi} & \frac{\sqrt{2}}{4}-\frac{1}{4} & \frac{1}{4}-\frac{\sqrt{2}}{2\pi} & \scriptstyle0 & \scriptstyle0
     \\[2mm]\hline\rule{0mm}{5mm}
        \C_{2,3} & \frac{1}{8}-\frac{3}{8\pi} & \frac{3}{16}-\frac{1}{2\pi} & \frac{1}{4\pi} & \frac{1}{2\pi} & \frac{3}{16}+\frac{1}{8\pi} & \frac{3}{16}+\frac{1}{8\pi} & \frac{1}{2\pi} & \frac{1}{4\pi} & \ldots
     \\[2mm]\hline\rule{0mm}{5mm}
        \C_{4,3} & \frac{1}{8}-\frac{47}{120\pi} & \frac{11}{64}-\frac{8}{15\pi} & \frac{1}{40\pi} & \frac{4}{15\pi}-\frac{1}{16} & \frac{19}{120\pi} & \frac{3}{32} & \frac{13}{120\pi}+\frac{7}{64} & \frac{11}{30\pi}+\frac{1}{16} & \ldots
     \end{array}\]
  \caption{Intrinsic volumes of $\C_{\be,n}$ for $n=1,2,3$ (the missing entries for $\C_{2,3}$ and $\C_{4,3}$ are obtained via $V_j(\C_{\be,n})=V_{d_{\be,n}-j}(\C_{\be,n})$).}
  \label{tab:V(C_(b,n))}
\end{table}

\section{Proof of the main result}\label{sec:proof-main-res}

In this section we provide the proof of the main Theorem~\ref{thm:main}. We first describe in Section~\ref{sec:intrvol-curv} how the intrinsic volumes and the curvature measures may be expressed in terms of curvature by stating Weyl's classical tube formula~\cite{weyl:39} and a generalization, which holds for a larger class of cones. In Section~\ref{sec:R-C-H} we state some general facts about the orthogonal/unitary/(compact) symplectic group, that we will use in Section~\ref{sec:comp} for the proof of Theorem~\ref{thm:main}.

\subsection{Expressing intrinsic volumes in terms of curvature}\label{sec:intrvol-curv}

From the characterizations~\eqref{eq:V_j(C)-polyhdrl} and~\eqref{eq:Phi_j(C,M)-polyhdrl} one easily obtains elementary formulas for the intrinsic volumes and for the curvature measures of polyhedral cones (although the actual computation of the intrinsic volumes may still very well be complicated as the resulting formulas include volumes of spherical polytopes).

Another class of cones, for which one has closed formulas for the intrinsic volumes, are \emph{smooth cones}, i.e., cones~$C\subseteq\IR^d$ such that the boundary $M:=\partial K$ of $K=C\cap S^{d-1}$ is a smooth (i.e., $C^\infty$) hypersurface of~$S^{d-1}$. The formulas for the intrinsic volumes involve the \emph{principal curvatures} of~$M$, which we shall describe next.


In general, let $M\subset S^{d-1}$ be a smooth submanifold of the unit sphere. For $p\in M$ we denote the tangent space of~$M$ in~$p$ by~$T_pM$, and we denote its orthogonal complement in $T_pS^{d-1}=p^\bot$ by $T^\bot_pM$. Let $\zeta\in T_pM$ be a tangent vector, and $\eta\in T^\bot_pM$ a normal vector. It can be shown that if $c\colon\IR\to M$ is a curve with $c(0)=p$ and $\dot{c}(0)=\zeta$, and if $w\colon\IR\to \IR^d$ is a normal extension of~$\eta$ along~$c$, i.e., $w(t)\in T^\bot_{c(t)}M$ and $w(0)=\eta$, then the orthogonal projection of $\dot{w}(0)$ onto $T_pM$ neither depends on the choice of the curve $c$ nor on the choice of the normal extension $w$ of $\eta$ (cf.~for example~\cite[Ch.~14]{thor:94} for the hypersurface case, or~\cite[Ch.~6]{dC} for general Riemannian manifolds). It therefore makes sense to define the map
  \[ W_{p,\eta}\colon T_pM\to T_pM \;,\quad \zeta\mapsto -\Pi_{T_pM}(\dot{w}(0)) \; , \]
where $w\colon\IR\to \IR^d$ is a normal extension of $\eta$ along a curve $c\colon\IR\to M$ which satisfies $c(0)=p$ and $\dot{c}(0)=\zeta$, and $\Pi_{T_pM}$ denotes the orthogonal projection onto the tangent space $T_pM$. This map is called the \emph{Weingarten map}.

It can be shown that $W_{p,\eta}$ is a symmetric linear map (cf.~\cite[Ch.~6]{dC}), so that it has $m:=\dim M$ real eigenvalues $\kappa_1(p,\eta),\ldots,\kappa_m(p,\eta)$, which are called the \emph{principal curvatures} of~$M$ at~$p$ in direction~$\eta$. The corresponding eigenvectors are called \emph{principal directions}. Furthermore, we denote the elementary symmetric functions in the principal curvatures by
\begin{align}\label{eq:def-sigma_i(p,eta)}
   \sigma_i(p,\eta) & := \sum_{1\leq j_1<\ldots<j_i\leq m} \kappa_{j_1}(p,\eta)\cdots\kappa_{j_i}(p,\eta) \; .
\end{align}

When we are working with orientable hypersurfaces, i.e., with submanifolds of codimension~$1$, which are endowed with a global unit normal vector field~$\nu\colon M\to T^\bot M$, $\nu(p)\in T^\bot_pM$, $\|\nu(p)\|=1$, then we abbreviate $\sigma_i(p):=\sigma_i(p,\nu(p))$. When $M=\partial K$ is the boundary of a spherically convex set, and additionally a smooth hypersurface of $S^{d-1}$, then we always consider $M$ to be endowed with the unit normal field pointing \emph{inwards} the set~$K$ (this implies $\kappa_i(p)\geq0$ for all $i=1,\ldots,d-2$).

In the context of (spherically) convex sets, Weyl's classical tube formula~\cite{weyl:39} says the following: Let $C\subseteq\IR^d$ be a closed convex cone such that the boundary $M=\partial K$ of $K=C\cap S^{d-1}$ is a smooth hypersurface of~$S^{d-1}$. Then, for $1\leq j\leq d-1$,
\begin{equation}\label{eq:tube-form_weyl}
  V_j(C) = \frac{1}{\mO_{j-1}\cdot \mO_{d-j-1}}\cdot \underset{p\in M}{\int} \sigma_{d-j-1}(p)\,dM \; ,
\end{equation}
where $\mO_{d-1}:=\vol_{d-1}S^{d-1}=\frac{2\pi^{d/2}}{\Gamma(d/2)}$, and $dM$ denotes the volume element induced from the Riemannian metric on $M$.

The problem is that the cones~$\Cbn$, whose intrinsic volumes we want to compute, are neither polyhedral nor smooth (for $n\geq3$). But the rank decomposition~\eqref{eq:decomp-C-rank} yields a decomposition of~$\Cbn$ into smooth pieces, which is the basic idea behind the proof of Theorem~\ref{thm:main}. In the remainder of this section we define the notion of a \emph{stratifiable convex set}, which is a generalization of both polyhedral and smooth convex sets, and we state a suitable generalization of~\eqref{eq:tube-form_weyl}.

%
%
In the following let $M\subset S^{d-1}$ be a smooth submanifold of the unit sphere. We may consider the \emph{tangent} resp.~\emph{normal bundle of $M$} (cf.~\cite[Ch.~3]{spiv1}) as submanifolds of $\IR^d\times\IR^d$ via
  \[ TM = \bigcup_{p\in M} \{p\}\times T_pM \;,\qquad T^\bot M = \bigcup_{p\in M} \{p\}\times T^\bot_pM \; . \]
Furthermore, 
if $M\subseteq S^{d-1}$, we also consider the \emph{spherical normal bundle}
\begin{align}
   T^S M & := \bigcup_{p\in M} \{p\}\times T_p^S M \;,\qquad T_p^S M := T^\bot_pM\cap S^{d-1} \; . \label{eq:def-spher-norm-bdl}
\end{align}
%

The tangent and the normal bundle are both so-called \emph{vector bundles}, as all fibers of the canonical projection maps $(x,v)\mapsto x$ are vector spaces.
The spherical normal bundle is a \emph{sphere bundle}, as all fibers are subspheres of the unit sphere. For the generalization of Weyl's tube formula we need to consider another class of fiber bundles, where each fiber is given by (the relative interior of) a spherically convex set.

Let $C\subseteq\IR^d$ be a closed convex cone, and let $\Pi_C$ denote the canonical projection onto~$C$. For $p\in C$ we define the \emph{normal cone} of~$C$ in~$p$ by
  \[ N_p(C) := \{ v\in\IR^d \mid \Pi_C(v+p)=p \} \; , \]
which is easily seen to be a closed convex cone with $N_p(C)\subseteq p^\bot$. For a subset $M\subseteq C$, we define the 
\emph{spherical duality bundle} via
\begin{equation}\label{eq:def-N^SM}
  N^SM := \bigcup_{p\in M} \{p\}\times N_p^S M \;,\qquad N_p^S M := \relint(N_p(C))\cap S^{d-1} \; .
\end{equation}
Note that we have not imposed any smoothness assumption yet, but if $M\subseteq C\cap S^{d-1}$ is smooth, then we have $N^SM\subseteq T^SM$. Note also that $N^SM$ in fact depends on~$M$ and~$C$.

\begin{definition}\label{def:stratified}
Let $C\subseteq\IR^d$ be a closed convex cone. We call the spherically convex set $K:=C\cap S^{d-1}$ \emph{stratifiable} if it decomposes into a disjoint union~$K = \dot{\bigcup}_{i=0}^k M_i$, such that:
\begin{enumerate}
  \item For all $0\leq i\leq k$, $M_i$ is a smooth connected submanifold of $S^{d-1}$.
  \item For all $0\leq i\leq k$ the spherical duality bundle $N^SM_i$ is a smooth manifold.
\end{enumerate}
If (1) and (2) are satisfied, then we call~$K = \dot{\bigcup}_{i=0}^k M_i$ a \emph{valid decomposition}. Furthermore, we call a stratum~$M_i$ \emph{essential} if $\dim N^SM_i=d-2$, otherwise we call it \emph{negligible}.
\end{definition}

The following theorem is the announced generalization of Weyl's tube formula~\eqref{eq:tube-form_weyl} to stratified sets. A proof may be found in~\cite[\S4.3]{am:thesis}. Formulas similar to the one that we give in the following theorem may also be found in~\cite{AT:07}.

\begin{thm}\label{thm:form-intr-vol}
Let $C\subseteq\IR^d$ such that $K:=C\cap S^{d-1}$ is stratifiable and decomposes into the valid decomposition $K = \dot{\bigcup}_{i=0}^{\tilde{k}} M_i$, with $M_0=\inter(K)$, and $M_1,\ldots,M_k$ denoting the essential and $M_{k+1},\ldots,M_{\tilde{k}}$, $k\leq \tilde{k}$, denoting the negligible pieces. Then, for $1\leq j\leq d-1$,
\begin{align}
   V_j(C) & \;=\; \sum_{i=1}^k \Phi_j(C,M_i) \; ,
\label{eq:V_j(C)-strat}
\\ \Phi_j(C,M_i) & \;=\; \frac{1}{\mO_{j-1}\cdot\mO_{d-j-1}}\cdot \underset{p\in M_i}{\int} \; \underset{\eta\in N_p^S(C)}{\int} \sigma^{(i)}_{d_i-j-1}(p,-\eta) \;dN_p^S(C)\;dM_i \,,\; \text{for } i=1,\ldots k \; ,
\label{eq:Phi_j(C,M_i)-strat}
\end{align}
where $d_i:=\dim M_i+2$, $\sigma^{(i)}_\ell(p,-\eta)$ denotes the $\ell$th elementary symmetric function in the principal curvatures of $M_i$ at $p$ in direction $-\eta$, and $\sigma_\ell(p,-\eta):=0$ if $\ell<0$.
\end{thm}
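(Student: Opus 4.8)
The plan is to read off the curvature measures from the local spherical tube formula of Proposition~\ref{prop:rvol(T(K,a;M))}, which determines $\Phi_{j-1}(K,\cdot)$ through the dependence of $\rvol\,\T(K,\alpha;M^s)$ on $\alpha$. Since metric projection onto the closed convex cone $C$ is single-valued, $\Pi_K$ is defined on the whole tube $\T(K,\alpha)$ for $\alpha<\tfrac{\pi}{2}$, and the tube splits, up to a $\rvol$-null set, as $\T(K,\alpha)=\dot{\bigcup}_{i=0}^{\tilde k}\T_i$ with $\T_i:=\{p\in\T(K,\alpha)\mid \Pi_K(p)\in M_i\}$. The interior stratum gives $\T_0=\inter K$, which affects only $\Phi_{d-1}$ and nothing for $j\le d-1$; a negligible stratum has $\dim N^SM_i<d-2$, hence $\dim(N^SM_i\times(0,\alpha])<d-1$ and $\rvol(\T_i)=0$, so $\Phi_j(C,M_i)=0$ for every $j$. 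Thus it suffices to compute $\rvol\,\T(K,\alpha;M_i)$ for each essential $M_i$, extract $\Phi_{j-1}(K,M_i)$ from its dependence on $\alpha$, and sum: additivity of the measure $\Phi_{j-1}(K,\cdot)$ over $K=\dot{\bigcup}M_i$ then yields $V_j(C)=\Phi_{j-1}(K,K)=\sum_{i=1}^k\Phi_j(C,M_i)$.

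For an essential stratum put $d_i:=\dim M_i+2$; then $\dim N^SM_i=d-2$ forces $N_q(C)$ to be full-dimensional in $T_q^\bot M_i$, so the fiber $N_q^S(C)=\relint N_q(C)\cap S^{d-1}$ has dimension $d-d_i$ with $T_\eta N_q^S(C)=\eta^\bot\cap T_q^\bot M_i$. Consider the normal exponential map $\psi_i\colon N^SM_i\times(0,\alpha]\to S^{d-1}$, $(q,\eta,\theta)\mapsto\cos\theta\cdot q+\sin\theta\cdot\eta$. Since $\eta$ lies in the \emph{relative interior} of $N_q(C)$, convexity gives $\Pi_K(\psi_i(q,\eta,\theta))=q$ and $d(\psi_i(q,\eta,\theta),K)=\theta$, so $\psi_i$ is injective and its image equals $\T_i$ off a set of dimension $<d-1$ (the points lying over $\partial N_q(C)$). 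For the Jacobian, fix $(q,\eta,\theta)$ and choose an orthonormal frame $e_1,\dots,e_{d_i-2}$ of $T_qM_i$ diagonalizing the Weingarten map $W_{q,-\eta}$ with eigenvalues $\kappa_l=\kappa_l(q,-\eta)\geq0$, an orthonormal frame $f_1,\dots,f_{d-d_i}$ of $\eta^\bot\cap T_q^\bot M_i$, and $\partial_\theta$. Then $d\psi_i(\partial_\theta)=-\sin\theta\,q+\cos\theta\,\eta$ is a unit vector orthogonal to all other image vectors; $d\psi_i$ of a fiber direction is $\sin\theta\,f_m$; and, since the tangential part of $\dot\eta$ along any lift of $e_l$ to $N^SM_i$ equals $-W_{q,\eta}(e_l)=\kappa_l(q,-\eta)\,e_l$, $d\psi_i$ of that lift is $(\cos\theta+\sin\theta\,\kappa_l)\,e_l$ plus a vector in $\spa(f_1,\dots,f_{d-d_i})$. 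Hence the matrix of $d\psi_i$ in these bases is block lower triangular with diagonal blocks $\diag(\cos\theta+\sin\theta\,\kappa_l)$, $(\sin\theta)I$ and $(1)$, so, integrating against the iterated volume $dM_i\,dN_q^S(C)\,d\theta$,
\[
  \rvol\,\T(K,\alpha;M_i)=\frac{1}{\mO_{d-1}}\int_{M_i}\int_{N_q^S(C)}\int_0^\alpha(\sin\theta)^{d-d_i}\prod_{l=1}^{d_i-2}\big(\cos\theta+\sin\theta\,\kappa_l(q,-\eta)\big)\,d\theta\,dN_q^S(C)\,dM_i .
\]

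Expanding the product as $\sum_{s=0}^{d_i-2}\sigma^{(i)}_s(q,-\eta)\,\cos^{d_i-2-s}\theta\,\sin^{s+d-d_i}\theta$ and comparing with the standard identity $\rvol\,\T(S^{j-1},\alpha)=\frac{\mO_{j-1}\mO_{d-j-1}}{\mO_{d-1}}\int_0^\alpha\cos^{j-1}\theta\,\sin^{d-j-1}\theta\,d\theta$ for the totally geodesic great subsphere, the exponents agree precisely when $j-1=d_i-2-s$, i.e.\ $s=d_i-j-1$. The functions $\alpha\mapsto\int_0^\alpha\cos^{j-1}\theta\,\sin^{d-j-1}\theta\,d\theta$ have pairwise distinct orders of vanishing at $\alpha=0$, hence are linearly independent, so Proposition~\ref{prop:rvol(T(K,a;M))} identifies the coefficients term by term: for $1\le j\le d-1$,
\[
  \Phi_j(C,M_i)=\Phi_{j-1}(K,M_i)=\frac{1}{\mO_{j-1}\mO_{d-j-1}}\int_{M_i}\int_{N_q^S(C)}\sigma^{(i)}_{d_i-j-1}(p,-\eta)\,dN_q^S(C)\,dM_i ,
\]
which, together with the summation established in the first paragraph, is the statement of the theorem.

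I expect the main obstacle to be the measure-theoretic bookkeeping concealed in the first two paragraphs: proving rigorously, and uniformly across strata of different codimension, that the $\T_i$ partition $\T(K,\alpha)$ modulo $\rvol$-null sets, that $\psi_i$ is an almost-everywhere diffeomorphism onto $\T_i$, and that negligible strata genuinely contribute nothing. This is exactly where the smoothness hypotheses of Definition~\ref{def:stratified} on the spherical duality bundles $N^SM_i$ enter, so that the Weingarten maps and the principal curvatures $\kappa_l(q,-\eta)$ are defined and vary smoothly, in combination with elementary properties of the metric projection onto a closed convex set; the differential-geometric core, namely the Jacobian computation, is then routine once the adapted frame is in place.
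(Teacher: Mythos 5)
The paper itself contains no proof of Theorem~\ref{thm:form-intr-vol}: it defers entirely to \cite[\S4.3]{am:thesis}, so there is nothing in the text to compare your argument against. Your proposal is the natural proof and, as far as I can check, it is sound: partition $\T(K,\alpha)$ according to the stratum containing $\Pi_K(p)$; parametrize each essential piece by the normal exponential map $(q,\eta,\theta)\mapsto\cos\theta\,q+\sin\theta\,\eta$ on $N^SM_i\times(0,\alpha]$; compute the block-triangular Jacobian $(\sin\theta)^{d-d_i}\prod_l\bigl(\cos\theta+\sin\theta\,\kappa_l(q,-\eta)\bigr)$; expand in elementary symmetric functions; and identify coefficients against the linearly independent functions $\alpha\mapsto\rvol\T(S^{j-1},\alpha)$ of Proposition~\ref{prop:rvol(T(K,a;M))}. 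The index bookkeeping (the match $s=d_i-j-1$, the shift $\Phi_j(C,\cdot)=\Phi_{j-1}(K,\cdot)$ from Definition~\ref{def:Phi_j(C,M),V_j(C)}, and the vanishing of the interior and negligible contributions for $j\leq d-1$) all checks out, and this is surely the route taken in the cited thesis.

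Two points should be made explicit to close the sketch. First, your computation gives the \emph{volume} of the tube piece only because the Jacobian is positive on $(0,\pi/2)$, which requires $\kappa_l(q,-\eta)\geq 0$; you assert this in passing, but it is a genuine input (positive semidefiniteness of the second fundamental form of $\partial K$ in the inward normal direction $-\eta$, coming from convexity of $C$), and without it one would have to integrate $|\det|$ and the clean expansion into the $\sigma^{(i)}_s$ would fail. Second, the claims that $\psi_i$ is onto $\T_i$ up to an $\rvol$-null set and that $\rvol(\T_i)=0$ for negligible strata both reduce to Hausdorff-dimension bounds for Lipschitz images of the sets $\bigcup_q\{q\}\times\bigl(N_q(C)\setminus\relint N_q(C)\bigr)\times(0,\alpha]$ and $N^SM_i\times(0,\alpha]$, respectively; you correctly flag this as the place where the smoothness hypotheses of Definition~\ref{def:stratified} are consumed, and filling it in is routine but necessary. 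With these two items supplied, the argument is complete.
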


A paraphrase of~\eqref{eq:V_j(C)-strat} is that the curvature measures vanish at the negligible pieces. Note also that in the sum~\eqref{eq:V_j(C)-strat} only those strata~$M_i$ contribute, for which $j\leq d_i-1$.

\subsection{Orthogonal, unitary, (compact) symplectic group}\label{sec:R-C-H}

In this section we discuss the compact Lie group, which preserves the canonical scalar product on~$\IF_\be^n$ given by $\langle x,y\rangle = x^\dagger y = \sum_{i=1}^n \bar{x}_i y_i$, $x,y\in\IF_\be^n$. We denote this group by
\begin{align*}
   G(n) := G_\be(n) & := \{ U\in\IF_\be^{n\times n}\mid \forall x,y\in\IF_\be^n : \langle Ux,Uy\rangle = \langle x,y\rangle \}
\\ & = \{ U\in\IF_\be^{n\times n}\mid U^\dagger U = I_n \} \; .
\end{align*}
For $\be=1,2,4$ the group $G_\be(n)$ is called the \emph{orthogonal group}, \emph{unitary group}, and \emph{(compact) symplectic group}, respectively, cf.~for example~\cite[\S7.2]{FH:91}. Note that an element $U\in G_\be(n)$ may be identified with an orthonormal basis of $\IF_\be^n$ by interpreting the matrix~$U$ as the $n$-tuple of its columns. We drop the index $\beta$ to simplify the notation.

The Lie algebra of $G(n)$, which we identify with the tangent space of $G(n)$ at the identity matrix~$I_n$, is given by the real vector space of skew-Hermitian matrices
  \[ \Skew_n := \Skew_{\be,n} := T_{I_n} G(n) = \{ A\in\IF_\be^{n\times n}\mid A^\dagger=-A \} \; . \]
To specify a left-invariant Riemannian metric on $G(n)$ it suffices to declare an $\IR$-basis of the Lie algebra~$\Skew_n$ to be orthonormal (and then extend the metric to $G(n)$ by pushing it forward via the left-multiplication). For $\be=4$ we declare the following basis of~$\Skew_n$ to be orthonormal:
\begin{align}\label{eq:basis-Skew_n}
   \{ \iota E_{ii} & \mid 1\leq i\leq n, \iota\in\{\bi,\bj,\bk\}\}
\\ & \cup \{ E_{ij}-E_{ji}\mid 1\leq j<i \leq n\}\cup \{ \iota(E_{ij}+E_{ji})\mid 1\leq j<i \leq n, \iota\in\{\bi,\bj,\bk\}\} \; , \nonumber
\end{align}
and for $\be=1,2$ we use its intersections with $\IR^{n\times n}$ and $\IC^{n\times n}$, respectively. It is readily checked that this yields a bi-invariant metric on~$G(n)$ (the bi-invariance in fact determines the Riemannian metric up to scaling).

Before we calculate the volume of $G(n)$ with respect to the volume determined by the above Riemannian metric, we recall the well-known \emph{(smooth) coarea formula}. The coarea formula is essential for the computation in Section~\ref{sec:comp}.

If $L\colon V\to W$ is a surjective linear operator 
between euclidean vector spaces $V$ and $W$, then we define the \emph{normal determinant} of $L$ as
  \[ \ndet(L) := |\det(L|_{\ker(L)^\bot})| \; , \]
where $L|_{\ker(L)^\bot}\colon \ker(L)^\bot\to W$ denotes the restriction of $L$ to the orthogonal complement of the kernel of $L$. Obviously, if 
$L$ is a bijective linear operator, then $\ndet(L)=|\det(L)|$, so the normal determinant provides a natural generalization of the absolute value of the determinant.

\begin{lemma}\label{lem:coarea}
Let $\vp\colon \M_1\to \M_2$ be a smooth surjective map between Riemannian 
manifolds~$\M_1,\M_2$. Then for any $f\colon \M_1\to\IR$ that is integrable w.r.t.~$d\M_1$ we have
\begin{align}
   \underset{\M_1}{\int} f\,d\M_1 & = \underset{q\in \M_2}{\int}\;\underset{p\in \vp^{-1}(q)}{\int} 
                \frac{f(p)}{\ndet(D_p\vp)} \,d\vp^{-1}(q)\,d\M_2 \; . 
                \label{eq:cor-coarea-Riem-1}
\end{align}
\end{lemma}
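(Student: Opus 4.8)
The plan is to prove this classical coarea formula by the standard two-step route: localize by a partition of unity, then reduce to a flat linear computation via the rank theorem. In the applications $\vp$ is a submersion -- equivalently $\ndet(D_p\vp)>0$ for every $p$ -- and this is the hypothesis under which the statement is literally meaningful, so I would formulate and prove it in that form. For a submersion each fiber $\vp^{-1}(q)$ is a smooth embedded submanifold of $\M_1$ of dimension $m-n$ (with $m:=\dim\M_1$, $n:=\dim\M_2$), inheriting a Riemannian metric from $\M_1$, so the inner integral on the right-hand side makes sense; if one insists on treating a general smooth surjective $\vp$, one first restricts to regular values, which form a full-measure subset of $\M_2$ by Sard's theorem, and handles the critical locus separately.

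First I would localize. Both sides of the claimed identity are linear in $f$, so using a partition of unity on $\M_1$ subordinate to a cover by coordinate balls it is enough to treat an $f$ supported in one chart on which $\vp$ is in normal form. By the rank theorem for submersions there are coordinates $(x',x'')$ on such a chart, $x'\in\IR^n$, $x''\in\IR^{m-n}$, together with coordinates $y\in\IR^n$ near the image, in which $\vp$ becomes the standard projection $(x',x'')\mapsto x'$; the fibers are then the affine slices $\{x'=\text{const}\}$, and the ambient integral becomes an ordinary integral over $\IR^m=\IR^n\times\IR^{m-n}$.

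Next I would compare the three Riemannian volume elements appearing in the formula. Writing $d\M_1=\sqrt{\det g}\,dx'\,dx''$, $d\M_2=\sqrt{\det h}\,dy$, and $d\vp^{-1}(q)=\sqrt{\det g^{\,q}}\,dx''$ for the respective metric tensors in these coordinates, the whole lemma collapses to the pointwise identity
\[
  \sqrt{\det g(p)}\;=\;\ndet(D_p\vp)\cdot\sqrt{\det g^{\,\vp(p)}(p)}\cdot\sqrt{\det h(\vp(p))}\, ,
\]
after which the assertion is simply Fubini's theorem on $\IR^n\times\IR^{m-n}$. This pointwise identity is itself the elementary linear-algebra fact that, for a surjective linear map $L\colon V\to W$ of Euclidean spaces, the orthogonal splitting $V=\ker L\oplus(\ker L)^\bot$ makes $L|_{(\ker L)^\bot}$ an isomorphism onto $W$ with $|\det|=\ndet(L)$, applied to $L=D_p\vp$, combined with the observation that $\ker(D_p\vp)$ is exactly the tangent space $T_p\vp^{-1}(\vp(p))$ of the fiber.

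The step I expect to be the main obstacle is precisely this bookkeeping of the volume elements: one must check carefully that the metric $\M_1$ induces on the fiber $\vp^{-1}(q)$ agrees with the metric $g^{\,q}$ that appears after straightening, and that $D_p\vp$ maps the unit cube of $\big(\ker D_p\vp\big)^\bot$ -- measured with the metric from $\M_1$ -- onto a parallelepiped of volume $\ndet(D_p\vp)$ in $T_{\vp(p)}\M_2$ measured with the metric from $\M_2$; this is exactly where the definition $\ndet(L)=|\det(L|_{\ker(L)^\bot})|$ enters. Granting this, everything else -- the partition of unity, the rank theorem, and Fubini -- is routine, and complete proofs along these lines are standard in the Riemannian-geometry and geometric-measure-theory literature.
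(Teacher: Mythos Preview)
Your sketch is the standard proof of the smooth coarea formula and is correct in outline; the only delicate point you flag---matching the induced fiber metric with the one obtained after straightening, and checking that $\ndet(D_p\vp)$ is exactly the ratio of the remaining volume factors---is genuinely the crux, and you have identified it correctly.

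However, the paper does not actually prove this lemma. After stating it, the authors merely explain why the inner integral is well-defined for almost all $q$ (via Sard's lemma) and then refer to the literature: Morgan, Federer, and Howard's appendix. So there is nothing to compare your argument against; you have supplied more than the paper does. Your remark that one should either assume $\vp$ is a submersion or invoke Sard to restrict to regular values matches the paper's own parenthetical discussion.
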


The inner integral in~\eqref{eq:cor-coarea-Riem-1} over the fiber $\vp^{-1}(q)$ is well-defined for almost all $q\in \M_2$. This follows from Sard's lemma (cf.~for example~\cite[Thm.~3-14]{spiv:65}), which implies that almost all $q\in \M_2$ are regular values, i.e., the derivative $D_p\vp$ has full rank for all $p\in\vp^{-1}(q)$. The fibers $\vp^{-1}(q)$ of regular values $q$ are smooth submanifolds of $\M_1$ and therefore the integral over $\vp^{-1}(q)$ is well-defined.
One calls $\ndet(D_p\vp)$ the \emph{Normal Jacobian} of~$\vp$ at~$p$.

See~\cite[3.8]{Mor:95} or~\cite[3.2.11]{Fed} for proofs of the coarea formula with $\M_1,\M_2$ 
being submanifolds of euclidean space. See~\cite[Appendix]{H:93} for a proof of the coarea formula in the above stated form.

To get the volume of $G(n)$ we consider its action on $\IF_\be^n$. Note that we have an inner product on $\IF_\be^n$ given by $\langle x,y\rangle = x^\dagger y$. Considering $\IF_\be^n$ as a $(\be n)$-dimensional real vector space, we have the inner product on $\IF_\be^n$ given by $\langle x,y\rangle_\IR:=\Re(\langle x,y\rangle)$, where $\Re\colon\IF_\be\to\IR$ denotes the canonical projection. For $\be=4$ this amounts to the same as declaring the set $\{ \iota\cdot e_i\mid \iota\in\{1,\bi,\bj,\bk\}\}$ an orthonormal basis of the real $(4n)$-dimensional vector space $\IH^n$, where $e_i\in\IR^n$ denotes the $i$th canonical basis vector.

We now get the volume of $G(n)$ in the following way: The map $\vp\colon G(n)\to S(\IF_\be^n)=\{x\in\IF_\be^n\mid \|x\|=1\}$, $U\mapsto U\cdot e_1$, is a \emph{Riemannian submersion}, i.e., for every $U\in G(n)$ the restriction of $D_U\vp$ to the orthogonal complement of its kernel is an isometry. In particular, the Normal Jacobian of $\vp$ is everywhere equal to~$1$. Furthermore, each fiber $\vp^{-1}(x)$ is isometric to $G(n-1)$ as is easily checked. An application of the coarea formula thus yields $\vol G(n)=\vol S(\IF_\be^n)\cdot \vol G(n-1)$. As $\vol S(\IF_\be^n) = \vol S^{\be n-1} = \mO_{\be n-1}$, we obtain by induction
\begin{equation}\label{eq:vol(G(n))}
   \vol G(n) = \prod_{i=1}^n \mO_{\be i-1} = \prod_{i=1}^n \frac{2\pi^{\be i/2}}{\Gamma(\frac{\be i}{2})} = 2^n\cdot \pi^{n(n+1)\be/4}\cdot \prod_{i=1}^n\frac{1}{\Gamma(\frac{\be i}{2})} \; .
\end{equation}

In Section~\ref{sec:comp} we will need to consider certain subgroups of $G(n)$. By a \emph{distribution} of $r\in\IZ_{>0}$ we understand a tuple $\rho=(\rho_1,\ldots,\rho_m)\in\IZ_{>0}^m$ such that $|\rho|:=\rho_1+\ldots+\rho_m=r$. For such $\rho$ with $|\rho|\leq n$ we define the closed subgroup $G(n,\rho)$ of $G(n)$ consisting of the matrices having a block-diagonal form prescribed by~$\rho$:
\begin{equation}\label{eq:G(n,rho)}
   G(n,\rho) := \left\{ \diag(U_1,\ldots,U_m,U') \mid U_i\in G(\rho_i) , U'\in G(n-r) \right\} \; .
\end{equation}
Note that $G(n,\rho)$ with its induced Riemannian metric is isometric to the direct product $G(\rho_1)\times\ldots\times G(\rho_m)\times G(n-r)$. Furthermore, the homogeneous space $G(n)/G(n,\rho)$ is a smooth manifold of dimension
\begin{align}\label{eq:dim(G(n)/G(n,rho))}
   \dim G(n)/G(n,\rho) & = \dim G(n) - \dim G(n-r) - \sum_{i=1}^m \dim G(\rho_i) = d_{\be,n} - d_{\be,n-r} - \sum_{i=1}^m d_{\be,\rho_i} \nonumber
\\ & = \be \left(\binom{n}{2} - \binom{n-r}{2} - \sum_{i=1}^m \binom{\rho_i}{2} \right) \; .
\end{align}
The case $\rho=1^{(r)}=(1,\ldots,1)$ ($r$-times) will be of particular importance. Note that $G(1)=S(\IF_\be)=\{a\in\IF_\be\mid \|a\|=1\}$, so that $G(n,1^{(r)}) \cong S(\IF_\be) \times \ldots \times S(\IF_\be) \times G(n-r)$. We use the notation
\begin{equation}\label{eq:def-G_(n,r)}
  G_{n,r} := G(n)/G(n,1^{(r)}) \; .
\end{equation}
Furthermore, we denote by $G(n)\to G_{n,r}$, $U\mapsto[U]:=U\cdot G(n,1^{(r)})$ the canonical projection.

Note that $G(n)$ has a natural action on $G_{n,r}$ given by $(U_1,[U_2])\mapsto [U_1U_2]$ for $U_1,U_2\in G(n)$. Moreover, as $G(n)$ acts transitively on $G_{n,r}$, there exists up to scaling at most one Riemannian metric on $G_{n,r}$, which is $G(n)$-invariant. In the following paragraphs we will give a concrete description of the tangent space $T_{[I_n]}G_{n,r}$, and specify on it a $G(n)$-invariant Riemannian metric.

As $G(n,1^{(r)})=\left\{\left.\begin{pmatrix} \Lambda & 0 \\ 0 & U'\end{pmatrix}\right| \Lambda=\diag(\lambda_1,\ldots,\lambda_r), \lambda_i\in S(\IF_\be), U'\in G(n-r)\right\}$, the tangent space of $G_{n,r}$ at $[I_n]$ is given by
  \[ T_{I_n}G(n,1^{(r)}) = \left\{ \left. \begin{pmatrix} D & 0 
               \\ 0 & S \end{pmatrix} \right| D=\diag(a_1,\ldots,a_r) \,,\; \Re(a_i)=0 \,,\; S^\dagger=-S\right\} \; . \]
The orthogonal complement of $T_{I_n}G(n,1^{(r)})$ in $T_{I_n}G(n)=\Skew_n$, the space of skew-Hermitian matrices, is given by
\begin{equation}\label{eq:bar(Skew_n)}
  \ol{\Skew_n} := (T_{I_n} G(n,1^{(r)}))^\bot = \left\{ \left. \begin{pmatrix} X & -Y^\dagger \\ Y & 0 \end{pmatrix} \right| X\in\IF_\be^{r\times r}, X^\dagger=-X, Y\in \IF_\be^{(n-r)\times r} \right\} \; .
\end{equation}

It can be shown (cf.~\cite[Lemma~II.4.1]{helga}) that there exists an open ball~$B$ around the origin in~$T_{I_n}G(n)=\Skew_n$ such that the intersection $B\cap \ol{\Skew_n}$ is diffeomorphic to an open neighborhood of~$[I_n]$ in~$G_{n,r}$. Moreover, the tangent space of~$G_{n,r}$ in~$[I_n]$ may be identified with $\ol{\Skew_n}$, and the restriction of the inner product on $\Skew_n$ to $\ol{\Skew_n}$ yields a well-defined Riemannian metric on $G_{n,r}$, which is $G(n)$-invariant. (See~\cite[\S5.2]{ambu:11b} for a more detailed description of the induced Riemannian metric on a homogeneous space in a similar situation.)

For $\be=4$ we have the following orthonormal basis of $\ol{\Skew_n}$, cf.~\eqref{eq:basis-Skew_n},
  \[ \{ E_{ij}-E_{ji} \mid (i,j)\in \mI \}\cup \{ \iota (E_{ij} + E_{ji})\mid \iota\in \{\bi,\bj,\bk\}, (i,j)\in \mI \} \; , \]
where $\mI := \mI_1\cup\mI_2$ with
\begin{equation}\label{eq:def-I1,I2}
  \mI_1 := \{ (i,j)\mid 1\leq j<i\leq r \} \,,\quad \mI_2 := \{ (i,j)\mid r+1\leq i\leq n \,,\; 1\leq j\leq r \} \;.\quad
\begin{array}[c]{c}
\begin{tikzpicture}[scale=0.6]
\fill[gray!20!white] (0,-0.3) -- (0,2) -- (1,1) -- (1,-0.3) -- cycle;
\draw (0,-0.3) rectangle (2.3,2)
      (0,2) -- (2.3,-0.3)
      (0,1) -- (1,1) -- (1,-0.3);
\draw[dashed] (1,2) -- (1,1) -- (2.3,1);
\path (0.4,1.3) node{$\scriptstyle \mI_1$} (0.5,0.35) node{$\scriptstyle \mI_2$};
\end{tikzpicture}
\end{array}
\end{equation}
For further use in Section~\ref{sec:comp}, we denote this orthonormal basis of $\ol{\Skew_n}\cong T_{[I_n]}G_{n,r}$ (for $\be=4$) by
\begin{equation}\label{eq:def-eta_(ij)}
  \eta_{ij}^1 := E_{ij}-E_{ji} \;,\qquad \eta_{ij}^\iota := \iota(E_{ij}+E_{ji}) \;,\qquad (i,j)\in\mI \,,\; \iota\in \{\bi,\bj,\bk\} \; .
\end{equation}
For $\be=2$ we have the orthonormal basis $\{\eta_{ij}^\iota\mid \iota\in\{1,\bi\}, (i,j)\in\mI\}$, and for $\be=1$ we have the orthonormal basis $\{\eta_{ij}^1\mid (i,j)\in\mI\}$.

The canonical projection $G(n)\to G_{n,r}$ turns out to be a Riemannian submersion, and an application of the coarea formula yields
\begin{equation}\label{eq:vol(G_(n,r))}
   \vol G_{n,r} = \frac{\vol G(n)}{\vol G(n,1^{(r)})} = \frac{\vol G(n)}{(\vol S(\IF_\be))^r\cdot \vol G(n-r)} = \frac{\vol G(n)}{\mO_{\be-1}^r\cdot \vol G(n-r)} \; .
\end{equation}

\subsection{Deducing the formulas for \texorpdfstring{$\Phi_j(\be,n,r)$}{Phi(beta,n,r)}}\label{sec:comp}

In this section we derive the claimed formulas for the curvature measures $\Phi_j(\be,n,r)$ stated in Theorem~\ref{thm:main}. The organization is straightforward and is as follows: First, we 
recall the well-known face structure of $\C_{\be,n}$, which is described for example in~\cite[\S II.12]{Barv}. Then we show that~$\C_{\be,n}$ is a stratified cone, and we determine the essential and the negligible pieces. Finally, we compute the principal curvatures of the strata, so that we can give the proof of Theorem~\ref{thm:main} by an application of Theorem~\ref{thm:form-intr-vol} and a small computation.

In this section we change to the spherical viewpoint and denote the intersection of $\C_{\be,n}$ 
with the unit sphere by
\begin{align*}
   K_n & := \C_{\be,n}\cap S(\Her_{\be,n}) & & \hspace{-27mm} = \{ A\in\Her_{\be,n}\mid A\seq0 \,,\; \|A\|=1 \} \; .
\end{align*}
First, we recall the well-known face structure of~$\C_{\be,n}$, cf.~Section~\ref{sec:intrvol-polyhedr}.

\begin{proposition}\label{prop:faces-SDP-cone}
The faces of $\C_{\be,n}$ are parametrized by the subspaces of $\IF_\be^n$. More precisely, for $L\subseteq\IF_\be^n$ an $\IF_\be$-linear subspace of $\IF_\be$-dimension~$r$, where $0\leq r\leq n-1$ the set $\{A\in \C_{\be,n}\mid \im(A)\subseteq L\}$ is a face of $\C_{\be,n}$ of dimension $d_{\be,r}$. Conversely, every face of $\C_{\be,n}$ is of this form and hence has dimension~$d_{\be,r}$ for some $0\leq r\leq n-1$. Moreover, every face of $\C_{\be,n}$ is of the form
\begin{equation}\label{eq:face-Hpd}
  \left\{ \left. U \begin{pmatrix} A' & 0 \\ 0 & 0 \end{pmatrix} U^\dagger \right| A'\in \C_{\be,r} \right\} \; ,
\end{equation}
where $U\in G_\be(n)$ and $0\leq r\leq n-1$. The normal cone at the face defined in~\eqref{eq:face-Hpd} is given by
\begin{equation}\label{eq:norm-face-Hpd}
  \left\{ \left. U \begin{pmatrix} 0 & 0 \\ 0 & -A'' \end{pmatrix} U^\dagger \right| A''\in \C_{\be,n-r} \right\} \; .
\end{equation}
\end{proposition}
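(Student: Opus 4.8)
The plan is to combine the standard description of the faces of a full-dimensional closed convex cone with the self-duality $\dual(\Cbn)=-\Cbn$ and one linear-algebra identity for the trace inner product. Since $\Cbn$ is full-dimensional in $\Herbn$, any supporting hyperplane of $\Cbn$ that meets $\Cbn$ is linear (if it did not pass through the origin, the cone property would force it to miss $\Cbn$ entirely), so the faces of $\Cbn$ are precisely the sets $F_B:=\{A\in\Cbn\mid A\bullet B=0\}$ with $-B$ ranging over $\dual(\Cbn)\setminus\{0\}$, i.e.\ with $B$ ranging over $\Cbn\setminus\{0\}$.

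The crux is the identity
\[
  A\bullet B \;=\; \bigl\|\,A^{1/2}B^{1/2}\,\bigr\|^2 \qquad (A,B\in\Cbn),
\]
where $A^{1/2}\in\Cbn$ is the positive semidefinite square root (furnished by the spectral theorem over $\IF_\be$) and $\|X\|^2=\Re\tr(XX^\dagger)$ is the squared Frobenius norm. I would prove it by writing $A=A^{1/2}A^{1/2}$, $B=B^{1/2}B^{1/2}$ and using the cyclicity $\Re\tr(PQ)=\Re\tr(QP)$ (valid over $\IF_\be$ since $\Re(ab)=\Re(ba)$) to rewrite $A\bullet B=\Re\tr(AB)=\Re\tr\bigl((A^{1/2}B^{1/2})(A^{1/2}B^{1/2})^\dagger\bigr)$. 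In particular $A\bullet B\geq0$ (this is the self-duality of $\Cbn$) and $A\bullet B=0$ iff $A^{1/2}B^{1/2}=0$ iff $\im(B)\subseteq\ker(A)$ iff $\im(A)\subseteq\ker(B)$, using $\im(A^{1/2})=\im(A)$ and $\ker(A^{1/2})=\ker(A)=\im(A)^\perp$.

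Thus $F_B=\{A\in\Cbn\mid\im(A)\subseteq\ker(B)\}$. Letting $B$ range over $\Cbn\setminus\{0\}$, the subspace $L:=\ker(B)$ ranges exactly over the proper $\IF_\be$-subspaces of $\IF_\be^n$ (for a given proper $L$, the orthogonal projector onto $L^\perp$ is a nonzero element of $\Cbn$ with kernel $L$), and distinct $L$ give distinct faces since $L=\im(A)$ for any maximal-rank $A\in F_B$; this is the asserted parametrization. Writing $r:=\dim_{\IF_\be}L$ and choosing $U\in G_\be(n)$ whose first $r$ columns form an orthonormal basis of $L$, the condition $\im(A)\subseteq L$ is equivalent to $U^\dagger AU=\diag(A',0)$ with $A'\in\IF_\be^{r\times r}$, and then $A\seq0\iff A'\seq0$; this yields the presentation \eqref{eq:face-Hpd}, and the linear isomorphism $A'\mapsto U\diag(A',0)U^\dagger$ carries $\Her_{\be,r}$ onto $\spa F_B$, so $\dim F_B=d_{\be,r}$ with $0\leq r\leq n-1$. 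For the normal cone I would invoke the elementary fact that for a closed convex cone $C$ and $p\in C$ one has $N_p(C)=\dual(C)\cap p^\perp$, evaluated at the relative-interior point $p:=U\diag(I_r,0)U^\dagger$ of the face (it corresponds to $I_r\in\inter\C_{\be,r}$ under the isomorphism above); then $-B\in\dual(\Cbn)=-\Cbn$ lies in $p^\perp$ iff $0=p\bullet B=\|p^{1/2}B^{1/2}\|^2$, i.e.\ iff $\im(p)=L\subseteq\ker(B)$, i.e.\ iff $\im(B)\subseteq L^\perp$, i.e.\ iff $B=U\diag(0,A'')U^\dagger$ with $A''\seq0$, which is exactly \eqref{eq:norm-face-Hpd}.

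The point demanding the most care is the quaternionic case $\be=4$: one must invoke the spectral theorem for quaternionic Hermitian matrices to produce $A^{1/2}\in\Cbn$ and must consistently work with $\Re\tr$ in place of $\tr$, since only $\Re\tr$ is cyclic over $\IH$ (and one must also fix the left/right conventions for $\IF_\be$-subspaces and ranges of matrices, which is routine once chosen consistently). A secondary, routine point is checking that $A'\mapsto U\diag(A',0)U^\dagger$ sends $\inter\C_{\be,r}$ onto $\relint F_B$, which is what legitimizes the choice of base point $p$ for the normal cone computation.
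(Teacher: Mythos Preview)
Your argument is correct and self-contained. The paper, by contrast, does not actually prove this proposition: it simply refers to \cite[\S II.12]{Barv} for the real case and asserts that the argument carries over to $\be=2,4$ without further comment. Your approach---identifying each face with $F_B=\{A\in\Cbn\mid A\bullet B=0\}$ for some $B\in\Cbn\setminus\{0\}$ via self-duality, then using the identity $A\bullet B=\|A^{1/2}B^{1/2}\|^2$ to translate $A\bullet B=0$ into the subspace condition $\im(A)\subseteq\ker(B)$---is essentially the standard one (and is what Barvinok does in the real case), so there is no genuine methodological divergence to report. Your explicit handling of the quaternionic subtleties (spectral theorem over~$\IH$, cyclicity of $\Re\tr$ rather than $\tr$) and of the normal-cone computation via $N_p(C)=\dual(C)\cap p^\perp$ at a relative-interior point goes beyond what the paper records, and is exactly what is needed to justify the claimed ``straightforward'' extension.
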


\begin{proof}
See for example~\cite[\S II.12]{Barv}. The proof for the real case given there extends to the complex and the quaternion case in a straightforward way.
\end{proof}

Note that~\eqref{eq:face-Hpd} and~\eqref{eq:norm-face-Hpd} show that when analyzing a face of $\C_{\be,n}$, by choosing an appropriate basis of $\IF_\be^n$, we may assume without loss of generality that this face is of the form $\C_{\be,r}\times\{0\}$ with corresponding normal cone $\{0\}\times(-\C_{\be,n-r})$.

Next we will show that~$K_n$ is stratifiable and exhibit a valid decomposition of~$K_n$, cf.~Definition~\ref{def:stratified}. For this, we use a finer distinction than in~\eqref{eq:decomp-C-rank}: we classify the matrices $A\in K_n$ not only according to their ranks, i.e., according to the multiplicity of the eigenvalue~$0$, but also according to the multiplicities of the nonzero eigenvalues.
To achieve this we define the \emph{eigenvalue pattern} of an element~$A\in K_n$ via
  \[ \pa(A) := (\rho_1,\ldots,\rho_m) \;,\quad \text{iff } \lambda_1=\ldots=\lambda_{\rho_1} > \lambda_{\rho_1+1}=\ldots=\lambda_{\rho_1+\rho_2} >\ldots \; , \]
where $\lambda_1\geq\ldots\geq\lambda_r>0$ are the positive eigenvalues of~$A$.
Note that $\pa(A)$ is a distribution of $r=\rk(A)$. The spherical cap~$K_n$ thus decomposes into
\begin{equation}\label{eq:dec-ID_n}
  K_n = \dot{\bigcup}_{r=1}^n \dot{\bigcup}_{|\rho|=r} M_{n,\rho} \;,\qquad M_{n,\rho} := \{A\in K_n\mid \pa(A)=\rho \} \; .
\end{equation}
Note that $\inter(K_n) = \dot{\bigcup}_{|\rho|=n} M_{n,\rho}$ and $\partial K_n = \dot{\bigcup}_{r=1}^{n-1} \dot{\bigcup}_{|\rho|=r} M_{n,\rho}$.

\begin{proposition}\label{prop:strat-struct}
The set $M_{n,\rho}$, $|\rho|\leq n$, defined in~\eqref{eq:dec-ID_n} is a smooth submanifold of the unit sphere $S(\Herbn)$. Furthermore, the duality bundle $N^SM_{n,\rho}$ defined in~\eqref{eq:def-N^SM} is a smooth manifold for all $|\rho|\leq n$. Hence~\eqref{eq:dec-ID_n} is a valid decomposition.

The strata $\big\{ M_{n,1^{(r)}} \mid 1\leq r\leq n\big\}$, where $1^{(r)}:=(1,1,\ldots,1)$, are essential and all the other strata $M_{n,\rho}$ are negligible.
\end{proposition}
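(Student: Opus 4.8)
The plan is to exploit the conjugation action of $G(n)=G_\be(n)$ on $\Herbn$, $A\mapsto UAU^\dagger$: it is by isometries of $(\Herbn,\bullet)$, preserves $\C_{\be,n}$ and $K_n$, and, not altering the spectrum, preserves each $M_{n,\rho}$. Indeed the $M_{n,\rho}$ are exactly the orbit--eigenvalue strata of this action, and this structure drives the whole proof. To parametrize $M_{n,\rho}$, write $r=|\rho|$, let $m$ be the length of $\rho=(\rho_1,\dots,\rho_m)$, and set $\Lambda_\rho:=\{\lambda\in\IR^m\mid \lambda_1>\dots>\lambda_m>0,\ \sum_{i=1}^m\rho_i\lambda_i^2=1\}$, a connected smooth $(m-1)$-manifold (an open piece of an ellipsoid). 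For $\lambda\in\Lambda_\rho$ put $D_\lambda:=\diag(\lambda_1 I_{\rho_1},\dots,\lambda_m I_{\rho_m},0_{n-r})\in K_n$. Since the stabiliser of $D_\lambda$ under conjugation is precisely $G(n,\rho)$ (the commutant of $D_\lambda$, cf.\ \eqref{eq:G(n,rho)}), the map $\bar\Psi\colon (G(n)/G(n,\rho))\times\Lambda_\rho\to M_{n,\rho}$, $([U],\lambda)\mapsto UD_\lambda U^\dagger$, is a well-defined continuous bijection (surjectivity by the spectral theorem), and its inverse is continuous because eigenvalues and spectral subspaces depend continuously on $A$ so long as the eigenvalue clusters stay apart.

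Next I would check that $\bar\Psi$ is an immersion. Its differential at $([I_n],\lambda_0)$ sends $(\bar X,\dot\lambda)$, with $\bar X$ ranging over the orthogonal complement of $T_{I_n}G(n,\rho)$ in $\Skew_{\be,n}$ and $\dot\lambda\in T_{\lambda_0}\Lambda_\rho$, to $[\bar X,D_{\lambda_0}]+\diag(\dot\lambda_1 I_{\rho_1},\dots,\dot\lambda_m I_{\rho_m},0)$; the first term is a Hermitian matrix supported on the off-diagonal blocks of the size-$(\rho_1,\dots,\rho_m,n-r)$ decomposition, on which $\bar X\mapsto[\bar X,D_{\lambda_0}]$ is injective because $\lambda_{0,1},\dots,\lambda_{0,m},0$ are pairwise distinct, the second term is block-diagonal, and both are orthogonal to $D_{\lambda_0}$, hence lie in $T_{D_{\lambda_0}}S(\Herbn)$; so $d\bar\Psi$ is injective. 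A homeomorphism which is also an immersion is an embedding, so $M_{n,\rho}$ is a smooth embedded submanifold of $S(\Herbn)$ diffeomorphic to $(G(n)/G(n,\rho))\times\Lambda_\rho$, whence by \eqref{eq:dim(G(n)/G(n,rho))} one gets $\dim M_{n,\rho}=d_{\be,n}-d_{\be,n-r}-\sum_{i=1}^m d_{\be,\rho_i}+m-1$. For connectedness: $\Lambda_\rho$ is connected, $G(n)$ is connected for $\be=2,4$, and for $\be=1$ the stabiliser $O(\rho_1)\times\dots\times O(\rho_m)\times O(n-r)$ contains a matrix of determinant $-1$ (as $\rho_1\geq1$), so it meets both components of $O(n)$ and $G(n)/G(n,\rho)$ is again connected.

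For the spherical duality bundle I would invoke Proposition \ref{prop:faces-SDP-cone}: the normal cone $N_p(\C_{\be,n})$ at $p=UD_\lambda U^\dagger$ depends only on $r=\rk p$ and equals $\{U\diag(0_r,-A'')U^\dagger\mid A''\in\C_{\be,n-r}\}$, a full-dimensional cone inside the $d_{\be,n-r}$-dimensional subspace $\{U\diag(0_r,B)U^\dagger\mid B\in\Her_{\be,n-r}\}$, with relative interior corresponding to positive definite $A''$. Hence for $r<n$ the fibre $N^S_pM_{n,\rho}=\relint(N_p(\C_{\be,n}))\cap S(\Herbn)$ is an open subset of a $(d_{\be,n-r}-1)$-sphere, and the family $\{N^S_pM_{n,\rho}\}_p$ varies $G(n)$-equivariantly, so $N^SM_{n,\rho}\to M_{n,\rho}$ is a smooth fibre bundle (local triviality via local sections of $G(n)\times\Lambda_\rho\to M_{n,\rho}$) and in particular $N^SM_{n,\rho}$ is a smooth manifold with $\dim N^SM_{n,\rho}=\dim M_{n,\rho}+d_{\be,n-r}-1=d_{\be,n}-2-\sum_{i=1}^m(d_{\be,\rho_i}-1)$. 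This establishes that \eqref{eq:dec-ID_n} is a valid decomposition. Finally, by Definition \ref{def:stratified}, $M_{n,\rho}$ is essential iff $\dim N^SM_{n,\rho}=d_{\be,n}-2$, i.e.\ iff $\sum_{i=1}^m(d_{\be,\rho_i}-1)=0$; since $d_{\be,k}=k+\be\binom{k}{2}\geq k$ with $d_{\be,k}=1$ only for $k=1$, each summand is nonnegative and vanishes precisely when $\rho_i=1$, forcing $\rho=1^{(r)}$, and all other strata are negligible. (For $r=n$ the fibre degenerates; $M_{n,1^{(n)}}$ is the open dense stratum of $\inter K_n$ and is absorbed into the interior piece of Theorem \ref{thm:form-intr-vol}.)

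I expect the main obstacle to be the first part: rigorously establishing the smooth embedded submanifold structure of $M_{n,\rho}$ (and the bundle structure of $N^SM_{n,\rho}$), i.e.\ verifying that $\bar\Psi$ and its duality-bundle analogue are immersions with continuous inverse. The delicate points are the continuity of the spectral data as $A$ moves (separation of eigenvalue clusters) and the injectivity of $\bar X\mapsto[\bar X,D_{\lambda_0}]$ off the block diagonal. Once the manifold structures and the dimension formula for $M_{n,\rho}$ are in hand, the essential/negligible dichotomy is pure bookkeeping with $d_{\be,n}=n+\be\binom{n}{2}$.
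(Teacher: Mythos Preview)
Your proposal is correct and follows essentially the same route as the paper: parametrize $M_{n,\rho}$ by (eigenvalue simplex) $\times\, G(n)/G(n,\rho)$ via $([U],\lambda)\mapsto U D_\lambda U^\dagger$, verify this is an injective immersion and a homeomorphism onto its image, do the analogous thing for $N^SM_{n,\rho}$ with the extra factor $\inter(K_{n-r})$, and finish with the dimension count. If anything you are slightly more careful than the paper: you check connectedness (required by Definition~\ref{def:stratified}) and you justify that $\bar\Psi^{-1}$ is continuous via continuity of spectral data rather than appealing to compactness of the domain (which is open, not compact).
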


\begin{proof}
We fix a pattern $\rho=(\rho_1,\ldots,\rho_m)$ with $|\rho|=r\leq n$, and define the set $P\subset S^{n-1}$ via
\begin{equation}\label{eq:def-P_(n,rho)}
   P := \left\{\lambda\in S^{n-1}\left| \begin{array}{l}\lambda_1=\ldots=\lambda_{\rho_1}>\lambda_{\rho_1+1}=\ldots=\lambda_{\rho_1+\rho_2} > \ldots \\ \ldots \lambda_r > \lambda_{r+1}=\ldots=\lambda_n=0 \end{array} \right. \right\} \; .
\end{equation}
It is easily seen that the set $P$ is the intersection of the interior of an $m$-dimensional polyhedral cone with the unit sphere, so it is an $(m-1)$-dimensional submanifold of~$S^{n-1}$. We now consider the map
\begin{equation}\label{eq:def-psi_(n,rho)}
  \psi\colon P\times G(n)\to \Herbn \;,\quad (\lambda,U)\mapsto U\cdot \diag(\lambda)\cdot U^\dagger \; .
\end{equation}
This map is smooth, and its image is $M_{n,\rho}$ by the principle axis theorem. Concerning the fiber of $A\in M_{n,\rho}$, we may assume w.l.o.g.~that $A=\diag(\mu)$. Note that $U\cdot \diag(\lambda)\cdot U^\dagger = \diag(\mu)$ iff $\lambda=\mu \text{ and } U\cdot \diag(\lambda) = \diag(\lambda)\cdot U$. Furthermore, it is easily checked that for $\lambda\in P$, we have $U\cdot \diag(\lambda) = \diag(\lambda)\cdot U$ iff $U\in G(n,\rho)$.
We may thus conclude that, for any $A\in M_{n,\rho}$,
\begin{equation}\label{eq:psi_(n,rho)(lambda,Q)=A<=>...}
  \psi(\lambda,U) = A \iff \lambda=\mu \text{ and } U\in G(n,\rho) \; .
\end{equation}
This implies that the map $\psi$ factors over the product $P\times \big(G(n)/G(n,\rho)\big)$, i.e., we have a commutative diagram
\begin{equation}\label{eq:com-diag-psi_(n,rho)}
\begin{array}[c]{c}
\begin{tikzpicture}[>=stealth]
\matrix (m) [matrix of math nodes, row sep=2.5em, column sep=12mm, text height=2ex, text depth=0.5ex]
  {    P\times G(n) & \Herbn
    \\ P\times \big(G(n)/G(n,\rho)\big)
    \\ };
\path[->,font=\scriptsize]
  (m-1-1) edge node[auto]{$\psi$} (m-1-2);
\path[->,font=\scriptsize]
  (m-1-1) edge node[left]{$\Pi$} (m-2-1);
\path[->,dashed,font=\scriptsize]
  (m-2-1) edge node[below right]{$\ol{\psi}$} (m-1-2);
\end{tikzpicture}
\end{array} \; ,
\end{equation}
where $\Pi\colon P\times G(n)\to P\times \big(G(n)/G(n,\rho)\big)$ denotes the canonical projection map. Moreover,~\eqref{eq:psi_(n,rho)(lambda,Q)=A<=>...} implies that the map $\ol{\psi}$ is injective, and thus a bijection on the image of~$\psi$, which is~$M_{n,\rho}$. It is straightforward to show that the derivative of $\ol{\psi}$ has full rank. Since $P\times \big(G(n)/G(n,\rho)\big)$ is compact, it follows that $\ol{\psi}$ is a homeomorphism onto its image $M_{n,\rho}$. Hence $M_{n,\rho}$ is a submanifold of $\Herbn$ and $\ol{\psi}$ induces a diffeomorphism of $P\times \big(G(n)/G(n,\rho)\big)$ to~$M_{n,\rho}$, cf.~for example~\cite{booth}.

As for the claim about the duality bundle, note that for $A=U\cdot \diag(\lambda)\cdot U^\dagger$, $\lambda\in P$, the normal cone $N_A(K_n)$ is given by (cf.~Proposition~\ref{prop:faces-SDP-cone})
  \[ N_A(K_n) = \left\{ \left. U \begin{pmatrix} 0 & 0 \\ 0 & -A'' \end{pmatrix} U^\dagger \right| A''\in \C_{\be,n-r} \right\} \; . \]
Denoting by $\inter(K_{n-r})$ the interior of $K_{n-r}$ with respect to the topology on $S(\Her_{\be,n-r})$, we can define the map
\begin{align*}
   \Psi\colon P\times G(n)\times \inter(K_{n-r}) & \to \Herbn\times\Herbn \; ,
\\ (\lambda,U,A'') & \mapsto \left(\psi(\lambda,U) \,,\; U \begin{pmatrix} 0 & 0 \\ 0 & -A'' \end{pmatrix} U^\dagger\right) \; .
\end{align*}
This map is smooth, and its image is $N^S M_{n,\rho}$, cf.~\eqref{eq:def-N^SM} and~\eqref{eq:norm-face-Hpd}. If $\Psi(\lambda,U,A'')=\Psi(\mu,V,B'')$ then $\psi(\lambda,U)=\psi(\mu,V)$, i.e., $\lambda=\mu$ and $V=U\cdot D$ for some $D\in G(n,\rho)$. Furthermore, for the second component we get $U \begin{pmatrix} 0 & 0 \\ 0 & -A'' \end{pmatrix} U^\dagger = U D \begin{pmatrix} 0 & 0 \\ 0 & -B'' \end{pmatrix} D^\dagger U^\dagger$, i.e.,
\begin{equation}\label{eq:A''=D'.B''.(D')^dagger}
  A''=D' \cdot B''\cdot (D')^\dagger \; ,
\end{equation}
where $D=\diag(D_1,\ldots,D_m,D')$, cf.~\eqref{eq:G(n,rho)}. Using the conjugation~\eqref{eq:A''=D'.B''.(D')^dagger} to define an action of $G(n,\rho)$ on $\inter(K_{n-r})$, we may form the factor space $\big(G(n)\times \inter(K_{n-r})\big)/G(n,\rho)$. As the above arguments about the injectivity of $\Psi$ are reversible, it follows that $\Psi$ factors over the product $P\times \big(G(n)\times \inter(K_{n-r})\big)/G(n,\rho)$. We thus obtain the smooth injective map
  \[ \ol{\Psi}\colon P\times \big(G(n)\times \inter(K_{n-r})\big)/G(n,\rho) \to \Herbn\times\Herbn \; , \]
whose image is given by $N^SM_{n,\rho}$. Arguing as above shows that $N^SM_{n,\rho}$ is indeed a smooth manifold. Its dimension is given by
\begin{align*}
   \dim N^SM_{n,\rho} & = \dim P + \dim G(n) + \dim K_{n-r} - \dim G(n,\rho)
\\ & = (m-1) + d_{\be,n} + (d_{\be,n-r} - 1) - \Big( d_{\be,n-r} + \sum_{i=1}^m d_{\be,\rho_i}\Big)
\\ & = m-2 + n+\be \binom{n}{2} - \sum_{i=1}^m \Big(\rho_i+\be \binom{\rho_i}{2}\Big)
\\ & = n + \be\cdot \binom{n}{2} - 2 + m - r - \be\cdot \sum_{i=1}^m \binom{\rho_i}{2} \; .
\end{align*}
Note that since $m\leq r$, we have
  \[ m - r - \be\cdot \sum_{i=1}^m \binom{\rho_i}{2} \; \begin{cases} = 0 & \text{if } m=r \text{, i.e., } \rho=1^{(r)} \\ < 0 & \text{otherwise} \; . \end{cases} \]
Therefore, as $\dim \Herbn = d_{\be,n} = n+\be\cdot \binom{n}{2}$, we have
  \[ \dim N^SM_{n,\rho} \; \begin{cases} = \dim \Herbn - 2 & \text{if } \rho=1^{(r)} \\ < \dim \Herbn - 2 & \text{otherwise} \; . \end{cases} \]
Therefore, the strata $M_{n,1^{(r)}}$, $1\leq r\leq n$, are essential, and all other strata are negligible.
\end{proof}

From now on we may restrict the computations to the essential strata $M_{n,1^{(r)}}$. We use the notation
\begin{equation}\label{eq:def-P_r}
   P_r := \left\{\lambda\in S^{n-1} \mid \lambda_1>\lambda_2>\ldots>\lambda_r>0=\lambda_{r+1}=\ldots=\lambda_n\right\} \; .
\end{equation}
%
%
From the proof of Proposition~\ref{prop:strat-struct} we obtain the following corollary.

\begin{corollary}
Let $1\leq r\leq n$, and let $\vp_r$ be defined via
\begin{equation}\label{eq:def-phi_r}
  \vp_r\colon P_r\times G_{n,r} \to M_{n,1^{(r)}} \;,\quad (\lambda,[U])\mapsto U\cdot \diag(\lambda)\cdot U^\dagger \; .
\end{equation}
Then $\vp_r$ is well-defined and a diffeomorphism. \qed
\end{corollary}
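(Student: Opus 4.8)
The plan is to recognize this corollary as the special case $\rho=1^{(r)}$ of the construction carried out in the proof of Proposition~\ref{prop:strat-struct}, so that essentially nothing new has to be proved and the remaining task is only to unwind the notation. For this pattern the polyhedral set $P$ of~\eqref{eq:def-P_(n,rho)} is exactly $P_r$ as defined in~\eqref{eq:def-P_r}, the homogeneous space $G(n)/G(n,\rho)$ is $G_{n,r}$ by~\eqref{eq:def-G_(n,r)}, the stratum $M_{n,\rho}$ is $M_{n,1^{(r)}}$, and, by the commutative diagram~\eqref{eq:com-diag-psi_(n,rho)}, the smooth map $\psi$ of~\eqref{eq:def-psi_(n,rho)} factors through a map which is precisely $\vp_r$ from~\eqref{eq:def-phi_r}, i.e.\ $\vp_r=\ol\psi$.

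Concretely, I would first verify that $\vp_r$ is well defined, i.e.\ that $U\diag(\lambda)U^\dagger$ depends only on the coset $[U]=U\cdot G(n,1^{(r)})$. Writing $D\in G(n,1^{(r)})$ as $D=\diag(a_1,\ldots,a_r,U')$ with $a_i\in S(\IF_\be)$ and $U'\in G(n-r)$, and using that $\lambda\in P_r$ has $\lambda_{r+1}=\cdots=\lambda_n=0$, one obtains
\[
  D\diag(\lambda)D^\dagger=\diag(a_1\lambda_1\bar a_1,\ldots,a_r\lambda_r\bar a_r,0,\ldots,0)=\diag(\lambda),
\]
since each $\lambda_i$ is real and $|a_i|=1$; hence $(UD)\diag(\lambda)(UD)^\dagger=U\diag(\lambda)U^\dagger$, which is exactly the factorization $\vp_r=\ol\psi$. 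Surjectivity of $\vp_r$ onto $M_{n,1^{(r)}}$ is the spectral theorem, and injectivity is~\eqref{eq:psi_(n,rho)(lambda,Q)=A<=>...}: from $\psi(\lambda,U)=\psi(\mu,V)$ one reads off $\lambda=\mu$ and $V^{-1}U\in G(n,1^{(r)})$, i.e.\ $[U]=[V]$.

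It remains to see that $\vp_r$ is a diffeomorphism, and this is the one point I would spell out with a little care, because $P_r\times G_{n,r}$ is not compact. The quick route is to invoke directly the corresponding statement already proved for $\ol\psi$ in Proposition~\ref{prop:strat-struct} (full rank of $D\ol\psi$ together with $\ol\psi$ being a homeomorphism onto its image $M_{n,\rho}$). A self-contained alternative is to produce a smooth inverse: near any $A\in M_{n,1^{(r)}}$ the $r$ positive eigenvalues of $A$ are simple, so the ordered eigenvalue vector $\lambda(A)\in P_r$ and the rank-one spectral projections onto the positive eigenlines depend smoothly on $A$; choosing unit eigenvectors and an orthonormal basis of $\ker A$ smoothly then gives a local smooth section $A\mapsto(\lambda(A),[U(A)])$ of $\vp_r$, which together with smoothness of $\vp_r$ and the dimension count~\eqref{eq:dim(G(n)/G(n,rho))} shows $\vp_r$ is a diffeomorphism.
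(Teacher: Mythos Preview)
Your proposal is correct and takes essentially the same approach as the paper: the corollary is stated there with a bare \qed, since it is nothing but the specialization $\rho=1^{(r)}$ of the map $\ol\psi$ constructed in the proof of Proposition~\ref{prop:strat-struct}. You actually go further than the paper by noticing that $P_r\times G_{n,r}$ is not compact (the paper's ``Since $P\times(G(n)/G(n,\rho))$ is compact'' is a slip, as $P$ is defined by strict inequalities) and supplying a clean fix via the smooth local inverse coming from simplicity of the positive eigenvalues.
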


In the following proposition we compute the Normal Jacobian of~$\vp_r$, i.e., the absolute value of its Jacobi determinant. This result is well-known, but we include the proof for the sake of completeness.

\begin{proposition}\label{prop:phi_(n,r)}
The Normal Jacobian of $\vp_r$ at $(\lambda,[U])$ is given by
  \[ \ndet(D_{(\lambda,[U])}\vp_r) = 2^{r(2n-r-1) \be/4}\cdot \Delta(\lambda)^\be\cdot \prod_{i=1}^r \lambda_i^{\be(n-r)} \; , \]
where $\Delta(\lambda)=\prod_{1\leq i<j\leq r} (\lambda_i-\lambda_j)$ denotes the Vandermonde determinant.
\end{proposition}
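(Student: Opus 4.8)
The plan is to differentiate $\vp_r$ at a convenient base point. Since $\vp_r(\lambda,[U])=U\diag(\lambda)U^\dagger$ is $G_\be(n)$-equivariant, I would first normalize $[U]$ to $[I_n]$: for $V\in G_\be(n)$ we have $\vp_r\circ(\mathrm{id}\times V)=\mathrm{conj}_V\circ\vp_r$, where $[U]\mapsto[VU]$ is an isometry of $G_{n,r}$ (its metric was chosen to be $G(n)$-invariant) and $\mathrm{conj}_V\colon A\mapsto VAV^\dagger$ is an isometry of $\Herbn$, because $\tr\big((VAV^\dagger)^\dagger VBV^\dagger\big)=\tr(A^\dagger B)$. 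As $\ndet$ is unaffected by pre- and post-composition with isometries, the chain rule gives $\ndet(D_{(\lambda,[U])}\vp_r)=\ndet(D_{(\lambda,[VU])}\vp_r)$; choosing $V=U^{-1}$ reduces us to $[U]=[I_n]$.

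Next I would compute $D_{(\lambda,[I_n])}\vp_r$ on the canonical orthonormal basis of the domain. Write $\Lambda:=\diag(\lambda)$, so its first $r$ diagonal entries are $\lambda_1>\dots>\lambda_r>0$ and the others are $0$. The tangent space of $P_r\times G_{n,r}$ at $(\lambda,[I_n])$ is the orthogonal sum $T_\lambda P_r\oplus\ol{\Skew_n}$, cf.~\eqref{eq:bar(Skew_n)}. On the first summand $D\vp_r(\mu,0)=\diag(\mu)$, so an orthonormal basis $f_1,\dots,f_{r-1}$ of $T_\lambda P_r\subseteq\IR^r\times\{0\}$ is mapped to the orthonormal family $\diag(f_1),\dots,\diag(f_{r-1})$ of diagonal matrices. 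On the second summand I would represent tangent vectors of $G_{n,r}$ by one-parameter subgroups $t\mapsto[\exp(tH)]$ with $H\in\ol{\Skew_n}$; then $\exp(tH)^\dagger=\exp(-tH)$ yields $D\vp_r(0,H)=[H,\Lambda]$, whose $(k,l)$ entry is $H_{kl}(\lambda_l-\lambda_k)$. Feeding in the orthonormal basis $\{\eta_{ij}^\iota\}$ of $\ol{\Skew_n}$ from~\eqref{eq:def-eta_(ij)}: for $(i,j)\in\mI_1$ (where $1\le j<i\le r$) the vector $[\eta_{ij}^\iota,\Lambda]$ is $(\lambda_j-\lambda_i)$ times either $E_{ij}+E_{ji}$ (if $\iota=1$) or $\iota(E_{ij}-E_{ji})$ (if $\iota\in\{\bi,\bj,\bk\}$), each of which has $\Herbn$-norm $\sqrt{2}$; for $(i,j)\in\mI_2$ (where $i>r$, hence $\lambda_i=0$) it is $\lambda_j$ times one of these same norm-$\sqrt{2}$ vectors.

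The images just listed are pairwise orthogonal in $\Herbn$: they fall into three families with disjoint matrix supports --- the diagonal matrices $\diag(f_k)$; the $[\eta_{ij}^\iota,\Lambda]$ with $(i,j)\in\mI_1$, living in the off-diagonal of the upper-left $r\times r$ block; and the $[\eta_{ij}^\iota,\Lambda]$ with $(i,j)\in\mI_2$, living in the off-diagonal $r\times(n-r)$ block --- while within each family distinct index pairs occupy distinct positions and distinct choices of $\iota$ give orthogonal images. Hence the Gram matrix of the images of the chosen orthonormal basis is diagonal, and
\[
  \ndet(D_{(\lambda,[I_n])}\vp_r)=\prod_{k=1}^{r-1}\|\diag(f_k)\|\cdot\prod_{(i,j)\in\mI_1}\prod_\iota\big(|\lambda_i-\lambda_j|\sqrt{2}\big)\cdot\prod_{(i,j)\in\mI_2}\prod_\iota\big(\lambda_j\sqrt{2}\big).
\]
The first product equals $1$; the second equals $(\sqrt{2})^{\be\binom{r}{2}}\Delta(\lambda)^\be$, using $\lambda_1>\dots>\lambda_r$ (so $\prod_{1\le j<i\le r}|\lambda_i-\lambda_j|=\Delta(\lambda)$) and $|\mI_1|=\binom{r}{2}$; the third equals $(\sqrt{2})^{\be r(n-r)}\prod_{j=1}^r\lambda_j^{\be(n-r)}$, since each $j$ occurs $n-r$ times among the pairs in $\mI_2$ and $|\mI_2|=r(n-r)$. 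Because $\binom{r}{2}+r(n-r)=\tfrac{1}{2}r(2n-r-1)$, the powers of $2$ combine to $2^{r(2n-r-1)\be/4}$, which is the asserted formula. There is no real conceptual difficulty here --- this is the classical Jacobian of the spectral map underlying the $\GbE$ eigenvalue density --- and the only step requiring care is the orthogonality bookkeeping above, in particular verifying that the quaternionic basis vectors $\iota(E_{ij}\pm E_{ji})$ remain pairwise orthogonal after applying $[\,\cdot\,,\Lambda]$ when $\be=4$.
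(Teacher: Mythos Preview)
Your proof is correct and follows essentially the same route as the paper: reduce to $[U]=[I_n]$ by invariance, compute $D\vp_r$ on the diagonal summand and on the basis $\eta_{ij}^\iota$ via the commutator $[H,\Lambda]$, observe that the images are mutually orthogonal with norms $\sqrt{2}\,|\lambda_j-\lambda_i|$ or $\sqrt{2}\,\lambda_j$, and multiply. Your justification of the reduction step (via equivariance and isometry) is in fact spelled out more carefully than in the paper, which simply invokes orthogonal invariance.
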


\begin{proof}
We restrict to the quaternion case $\be=4$; the cases $\be\in\{1,2\}$ are similar. By orthogonal invariance, we may assume w.l.o.g.~that $U=I_n$. For $\zeta\in T_\lambda P_r$ we have
\begin{equation}\label{eq:Dphi-lambda}
   D_{(\lambda,[I_n])}\vp_r(\zeta,0) = \diag(\zeta) \; .
\end{equation}

Recall that we identify $T_{[I_n]}G_{n,r}$ with $\ol{\Skew_n}$, which has the orthonormal basis $(\eta_{ij}^\iota)$ defined in~\eqref{eq:def-eta_(ij)}. Let $U_{ij}^\iota\colon\IR\to G(n)$, with $\iota\in\{1,\bi,\bj,\bk\}$ and $(i,j)\in\mI$ (cf.~\eqref{eq:def-I1,I2}), be curves such that the induced curves $[U_{ij}^\iota]\colon\IR\to G_{n,r}$ define the directions~$\eta_{ij}^\iota$, cf.~\eqref{eq:def-eta_(ij)}. Then we may compute the derivative of $\vp_r$ in the second component for $\iota=1$ by
\begin{align}\label{eq:Dphi(0,eta_(ij))}
   D_{(\lambda,[I_n])}\vp_r(0,\eta_{ij}^1) & = \tfrac{d}{dt}\big(U_{ij}^1(t)\cdot\diag(\lambda)\cdot U_{ij}^1(t)^\dagger\big)(0)
\nonumber
\\ & = 
\eta_{ij}^1\cdot\diag(\lambda) - \diag(\lambda)\cdot \eta_{ij}^1
\nonumber
\\  & = \begin{cases}
           (\lambda_j-\lambda_i)\cdot (E_{ij}+E_{ji}) & \text{if $1\leq j<i\leq r$}
        \\ \lambda_j\cdot (E_{ij}+E_{ji}) & \text{if $r+1\leq i\leq n$, $1\leq j\leq r$}\; ,
        \end{cases}
\end{align}
and similarly for $\iota\in\{\bi,\bj,\bk\}$
\begin{align}\label{eq:Dphi(0,eta_(ij)^i)}
   D_{(\lambda,[I_n])}\vp_r(0,\eta_{ij}^\iota) & = \tfrac{d}{dt}\big(U_{ij}^\iota(t)\cdot\diag(\lambda)\cdot U_{ij}^\iota(t)^\dagger\big)(0)
\nonumber
\\ & = 
\eta_{ij}^\iota\cdot\diag(\lambda) - \diag(\lambda)\cdot \eta_{ij}^\iota
\nonumber
\\  & = \begin{cases}
           (\lambda_j-\lambda_i)\cdot \iota(E_{ij}-E_{ji}) & \text{if $1\leq j<i\leq r$}
        \\ \lambda_j\cdot \iota(E_{ij}-E_{ji}) & \text{if $r+1\leq i\leq n$, $1\leq j\leq r$}\; .
        \end{cases}
\end{align}
Note that the $D_{(\lambda,[I_n])}\vp_r(0,\eta_{ij}^\iota)$ are pairwise orthogonal elements of $\Herbn$. Moreover, the directions $
\eta_{ij}^1,\eta_{ij}^\bi,\eta_{ij}^\bj,\eta_{ij}^\bk$ have length~$1$, while the directions $E_{ij}+E_{ji},\bi(E_{ij}-E_{ji}),\bj(E_{ij}-E_{ji}),\bk(E_{ij}-E_{ji})$ have lengths~$\sqrt{2}$. Taking this into account, we get
\begin{align*}
   \ndet(D_{(\lambda,[U])}\vp_r) & = \prod_{1\leq j<i\leq r} (\sqrt{2}\cdot (\lambda_j-\lambda_i))^4 \cdot \prod_{\substack{r+1\leq i \leq n\\ 1\leq j\leq r}} (\sqrt{2}\cdot \lambda_j)^4
\\ & = 2^{r(r-1)}\cdot \prod_{1\leq j<i\leq r} (\lambda_j-\lambda_i)^4 \cdot 2^{2r(n-r)}\cdot \prod_{1\leq j\leq r} \lambda_j^{4(n-r)} \; . \qedhere
\end{align*}
\end{proof}

It remains to compute the principal curvatures of the essential strata $M_{n,1^{(r)}}$ before we can use Theorem~\ref{thm:form-intr-vol} to compute the intrinsic volumes of $K_n$. We will do this in the following proposition. Note that the dimension of $M_{n,1^{(r)}}$ is given by
\begin{align*}
\dim M_{n,1^{(r)}} & = \dim P_r + \dim G_{n,r} \stackrel{\eqref{eq:dim(G(n)/G(n,rho))}}{=} r-1+\be\cdot \left(\tbinom{n}{2}-\tbinom{n-r}{2}\right)
\\ & = \be r(n-r) + r-1+\be\tbinom{r}{2} \; .
\end{align*}

\begin{proposition}\label{prop:princ-curv-M_m}
Let $A=U\cdot \diag(\lambda)\cdot U^\dagger\in M_{n,1^{(r)}}$ with $\lambda\in P_r$. Furthermore, let $A''\in \C_{\be,n-r}$, so that $B:=U\cdot \begin{pmatrix} 0 & 0 \\ 0 & -A''\end{pmatrix} \cdot U^\dagger\in N_A(K_n)$ is a vector in the normal cone of $K_n$ at $A$. If $\mu_1\geq\ldots\geq\mu_{n-r}\geq0$ denote the eigenvalues of $A''$, then the principal curvatures of~$M_{n,1^{(r)}}$ at~$A$ in direction $-B$ are given by
\begin{align*}
   \frac{\mu_1}{\lambda_1} ,\ldots,\frac{\mu_{n-r}}{\lambda_1} \,,\; \frac{\mu_1}{\lambda_2},\ldots,\frac{\mu_{n-r}}{\lambda_2} \;,\ldots,\; \frac{\mu_1}{\lambda_r},\ldots,\frac{\mu_{n-r}}{\lambda_r} \; & \text{ (each of these values $\be$-times)}\; ,
\\ 0 ,\ldots,0 \; & \text{ (overall $(r-1+\be\tbinom{r}{2})$-times)} \; .
\end{align*}
\end{proposition}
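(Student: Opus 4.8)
The plan is to compute the Weingarten map $W_{A,-B}$ of the manifold $M:=M_{n,1^{(r)}}$ at the point $A=U\cdot\diag(\lambda)\cdot U^\dagger$ in the direction $-B$ directly from its definition, using the parametrization $\vp_r$ from~\eqref{eq:def-phi_r} to get an explicit description of the tangent space $T_AM$. By orthogonal invariance (the group $G_\be(n)$ acts by isometries on $\Herbn$ preserving $M$, $K_n$, and all the relevant bundles) we may assume $U=I_n$, so that $A=\diag(\lambda)$ and $B=\diag(0,\ldots,0,-A'')$; after a further orthogonal change of basis inside the last $n-r$ coordinates we may also assume $A''=\diag(\mu_1,\ldots,\mu_{n-r})$. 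Thus $-B=\diag(0,\ldots,0,\mu_1,\ldots,\mu_{n-r})$.

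First I would write down a basis of $T_AM$. From~\eqref{eq:Dphi-lambda}, \eqref{eq:Dphi(0,eta_(ij))}, \eqref{eq:Dphi(0,eta_(ij)^i)} in the proof of Proposition~\ref{prop:phi_(n,r)}, the image of $D_{(\lambda,[I_n])}\vp_r$ is spanned by: the $r$ diagonal directions $\diag(\zeta)$ with $\zeta\in T_\lambda P_r$ (equivalently, $\zeta$ ranging over an $(r-1)$-dimensional space, plus the radial direction $\lambda$ itself which is tangent to the sphere-cone picture), the ``rotational'' directions $\iota(E_{ij}\pm E_{ji})$ for $1\le j<i\le r$, $\iota\in\{1,\bi,\bj,\bk\}$ (from $\mcI_1$), and the ``off-block'' directions $\iota(E_{ij}+E_{ji})$ (real part) and $\iota(E_{ij}-E_{ji})$ for $r+1\le i\le n$, $1\le j\le r$ (from $\mcI_2$). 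The normal direction we care about, $-B$, is supported entirely on the lower-right $(n-r)\times(n-r)$ block, which is orthogonal to all of these; this makes the Weingarten computation decouple into independent pieces.

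The heart of the computation is this: for each pair $(i,j)$ with $r+1\le i\le n$ (say $i=r+s$, $1\le s\le n-r$) and $1\le j\le r$, and each $\iota$, choose the curve in $M$ obtained by conjugating $\diag(\lambda)$ (extended by zeros) by the one-parameter rotation $U_{ij}^\iota(t)$ in the $(i,j)$-plane, and extend the normal vector $-B$ along this curve by conjugating it by the same rotation: $w(t):=U_{ij}^\iota(t)\cdot(-B)\cdot U_{ij}^\iota(t)^\dagger$. Differentiating at $t=0$ gives $\dot w(0)=[\eta_{ij}^\iota,-B]$ which, because $-B$ is diagonal with entry $\mu_{i-r}$ in slot $i$ and $0$ in slot $j$, equals $\mu_{i-r}$ times the corresponding ``$\eta$-type'' element in the $(i,j)$-slot. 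Comparing this with the tangent vector $\dot c(0)=D\vp_r(0,\eta_{ij}^\iota)$, which by~\eqref{eq:Dphi(0,eta_(ij))}--\eqref{eq:Dphi(0,eta_(ij)^i)} equals $\lambda_j$ times the same slot element (since $i>r$), one reads off that $W_{A,-B}$ acts on this tangent direction by multiplication by $\mu_{i-r}/\lambda_j$. This accounts for the $\be(n-r)r$ nonzero principal curvatures $\mu_k/\lambda_j$, each with multiplicity $\be$ (one for each $\iota$). For the remaining tangent directions — the diagonal ones and the $\mcI_1$-rotational ones, together $r-1+\be\binom{r}{2}$ of them — the analogous bracket $[\eta,-B]$ vanishes because $-B$ is supported away from the first $r$ coordinates, so a normal extension along these curves can be taken constant and the projection of its derivative is $0$; hence these contribute principal curvature $0$. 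One should also check that $W_{A,-B}$ maps $T_AM$ into $T_AM$ (i.e.\ that the projections land in the tangent space), which follows since each $\dot w(0)$ computed above is either $0$ or a multiple of an $\mcI_2$-slot element already in $T_AM$. Finally, since $W_{A,-B}$ is symmetric (Proposition text, via~\cite{dC}) and we have exhibited a full set of eigenvectors with the claimed eigenvalues, this determines all principal curvatures.

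The main obstacle I anticipate is the bookkeeping of the brackets $[\eta_{ij}^\iota,-B]$ over the quaternions: one must verify that for $\iota\in\{\bi,\bj,\bk\}$ the commutator still produces $\mu_{i-r}$ times a slot element of the same $\iota$-type (no mixing of $\bi,\bj,\bk$), which works precisely because $-B$ is a \emph{real} diagonal matrix and multiplication by a real scalar commutes with $\iota$; and one must confirm that the extension $w(t)=U_{ij}^\iota(t)(-B)U_{ij}^\iota(t)^\dagger$ genuinely stays normal to $M$ along the curve (equivalently, stays in $N_{c(t)}(K_n)$), which holds because $N^SM_{n,1^{(r)}}$ is exactly the orbit of such conjugations as established in the proof of Proposition~\ref{prop:strat-struct}. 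Once these points are in place the computation is routine and parallels Proposition~\ref{prop:phi_(n,r)} step for step.
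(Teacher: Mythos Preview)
Your approach is essentially identical to the paper's: reduce to $U=I_n$ and diagonal $A''$, use the conjugation $w(t)=U_{ij}^\iota(t)(-B)U_{ij}^\iota(t)^\dagger$ as a normal extension along the curve $t\mapsto U_{ij}^\iota(t)\diag(\lambda)U_{ij}^\iota(t)^\dagger$, compute $\dot w(0)=[\eta_{ij}^\iota,-B]$, and compare with $\zeta_{ij}^\iota=D\vp_r(0,\eta_{ij}^\iota)$ to read off the eigenvalues $\mu_{i-r}/\lambda_j$ on the $\mI_2$-block and $0$ on the diagonal and $\mI_1$ directions. Two small slips to clean up: there are $r-1$ (not $r$) diagonal tangent directions since $P_r\subset S^{n-1}$, and the bracket actually gives $-\mu_{i-r}$ times the slot element, the extra sign being absorbed by the minus in the definition $W_{A,-B}(\zeta)=-\Pi_{T_AM}(\dot w(0))$.
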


\begin{proof}
By orthogonal invariance we may assume w.l.o.g.~that $U=I_n$, so that $A=\diag(\lambda)$ and $A'' = \diag(\mu)$. From Proposition~\ref{prop:phi_(n,r)} we get that the tangent space of $M_{n,1^{(r)}}$ at $A$ is given by (omitting the argument $(\lambda,[I_n])$)
  \[ T_A M_{n,1^{(r)}} = D\vp_r\big(T_\lambda P_r\times \ol{\Skew_n}\big) \; . \]
It is easily seen that all the vectors in $D\vp_r(T_\lambda P_r\times \{0\})$ are principal directions of $M_{n,1^{(r)}}$ at~$A$ with principal curvature~$0$, thus giving $r-1$ of the claimed $r-1+\be\tbinom{r}{2}$ zero curvatures.

Concerning the second component, we again only consider the quaternion case $\be=4$, the other cases being similar. As in the proof of Proposition~\ref{prop:phi_(n,r)} let $U_{ij}^\iota\colon\IR\to G(n)$, with $\iota\in\{1,\bi,\bj,\bk\}$ and $(i,j)\in\mI$ (cf.~\eqref{eq:def-I1,I2}), be curves such that the induced curves $[U_{ij}^\iota]\colon\IR\to G_{n,r}$ define the directions~$\eta_{ij}^\iota$, cf.~\eqref{eq:def-eta_(ij)}. We denote the images of $D\vp_r$ by
\begin{equation}\label{eq:zeta_ij}
  \zeta_{ij}^\iota := D\vp_r(0,\eta_{ij}^\iota) \in T_A M_{n,1^{(r)}} \;,\quad \iota\in\{1,\bi,\bj,\bk\} \; .
\end{equation}
Explicit formulas for the vectors~$\zeta_{ij}^\iota$ are given in~\eqref{eq:Dphi(0,eta_(ij))}.

We define normal extensions of $-B=\diag(0,\mu)$ along the curves $\vp_r\big(\lambda,\big[U_{ij}^\iota(t)\big]\big)$ via
\begin{align*}
v_{ij}^\iota(t) & := U_{ij}^\iota(t)\cdot \diag(0,\mu)\cdot U_{ij}^\iota(t)^\dagger \;,\quad \iota\in\{1,\bi,\bj,\bk\} \; .
\end{align*}
Differentiating these normal extensions $t=0$ yields for $\iota=1$, using $\eta_{ij}^1=E_{ij}-E_{ji}$,
\begin{align*}
  \tfrac{d}{dt} v_{ij}^1(0) & = (E_{ij}-E_{ji})\cdot \diag(0,\mu) - \diag(0,\mu)\cdot (E_{ij}-E_{ji})
\\ & = \begin{cases}
          0 & \text{if $1\leq j<i\leq r$}
       \\ -\mu_{i-r}\cdot (E_{ij}+E_{ji}) & \text{if $r+1\leq i\leq n$, $1\leq j\leq r$}\; ,
       \end{cases}
\intertext{and for $\iota\in\{\bi,\bj,\bk\}$, using $\eta_{ij}^\iota=\iota(E_{ij}+E_{ji})$,}
  \tfrac{d}{dt} v^\iota(0) & = \iota(E_{ij}+E_{ji})\cdot \diag(0,\mu) - \diag(0,\mu)\cdot \iota(E_{ij}+E_{ji})
\\ & = \begin{cases}
          0 & \text{if $1\leq j<i\leq r$}
       \\ -\mu_{i-r}\cdot \iota(E_{ij}-E_{ji}) & \text{if $r+1\leq i\leq n$, $1\leq j\leq r$}\; .
       \end{cases}
\end{align*}
Comparing this with the values of $\zeta_{ij}^1,\zeta_{ij}^\bi,\zeta_{ij}^\bj,\zeta_{ij}^\bk$ given in~\eqref{eq:Dphi(0,eta_(ij))}, implies for $\iota\in\{1,\bi,\bj,\bk\}$
\begin{align*}
{\textstyle\frac{d}{dt}} v_{ij}^\iota(0)
      = \begin{cases}
           0\cdot \zeta_{ij}^\iota & \text{if $1\leq j<i\leq r$}
        \\ -\frac{\mu_{i-r}}{\lambda_j}\cdot \zeta_{ij}^\iota & \text{if $r+1\leq i\leq n$, $1\leq j\leq r$}\; .
        \end{cases}
\end{align*}
We conclude that the directions $\zeta_{ij}^1,\zeta_{ij}^\bi,\zeta_{ij}^\bj,\zeta_{ij}^\bk$ are principal directions with curvature~$0$ and~$\frac{\mu_{i-r}}{\lambda_j}$, respectively.
\end{proof}

Before we give the proof of Theorem~\ref{thm:main}, we state a small lemma, that will come in handy for an integral conversion later on.

\begin{lemma}\label{lem:integr-gauss-kernel}
Let $f\colon\IR^n\setminus\{0\}\to\IR$ be a homogeneous function of degree~$k$, i.e.,~$f(x)=\|x\|^k\cdot f(\|x\|^{-1}\cdot x)$. Then for a Borel set $U\subseteq S^{n-1}$ and $\hat{U}=\{s\cdot p\mid s\geq0 \,,\; p\in U\}$
  \[ \underset{p\in U}{\int} f(p)\,dp = \frac{1}{2^{\frac{n+k}{2}-1}\cdot \Gamma(\frac{n+k}{2})}\cdot \underset{x\in \hat{U}}{\int} e^{-\frac{\|x\|^2}{2}}\cdot f(x)\,dx \; . \]
\end{lemma}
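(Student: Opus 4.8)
The plan is to convert the Gaussian integral over the cone $\hat U$ into an iterated integral over the radial and spherical directions, using polar coordinates, and then recognize the radial part as a Gamma integral. First I would write every $x \in \hat U \setminus \{0\}$ uniquely as $x = s \cdot p$ with $s = \|x\| > 0$ and $p = \|x\|^{-1} x \in U$. In these polar coordinates the Lebesgue measure on $\IR^n$ decomposes as $dx = s^{n-1}\, ds\, dp$, where $dp$ is the (unnormalized) $(n-1)$-dimensional Hausdorff measure on $S^{n-1}$; note that the left-hand side of the claimed identity uses exactly this $dp$ (the statement writes $\int_{p\in U} f(p)\,dp$, i.e. with respect to Hausdorff measure, not the normalized $\rvol$). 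Using homogeneity $f(x) = f(sp) = s^k f(p)$ and $\|x\|^2 = s^2$, the right-hand integral becomes
\begin{equation*}
  \underset{x\in\hat U}{\int} e^{-\frac{\|x\|^2}{2}} f(x)\, dx
   = \underset{p\in U}{\int} f(p) \left( \underset{0}{\overset{\infty}{\int}} e^{-s^2/2}\, s^{k+n-1}\, ds \right) dp .
\end{equation*}

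Next I would evaluate the inner radial integral. Substituting $t = s^2/2$, so $s = \sqrt{2t}$ and $ds = (2t)^{-1/2}\,dt$, gives $\int_0^\infty e^{-s^2/2} s^{k+n-1}\,ds = \int_0^\infty e^{-t} (2t)^{(k+n-1)/2} (2t)^{-1/2}\,dt = 2^{(k+n-2)/2} \int_0^\infty e^{-t} t^{(k+n)/2 - 1}\,dt = 2^{\frac{n+k}{2}-1}\,\Gamma\!\big(\tfrac{n+k}{2}\big)$. This constant factors out of the $p$-integral, yielding
\begin{equation*}
  \underset{x\in\hat U}{\int} e^{-\frac{\|x\|^2}{2}} f(x)\, dx
   = 2^{\frac{n+k}{2}-1}\,\Gamma\!\big(\tfrac{n+k}{2}\big) \cdot \underset{p\in U}{\int} f(p)\, dp ,
\end{equation*}
which is exactly the asserted identity after dividing through. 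One should note that convergence of the radial integral requires $k + n > 0$; since $U \subseteq S^{n-1}$ forces $n \geq 1$ and the intended applications have $k \geq 0$, this is automatic, but I would mention it (or simply assume $f$ and $U$ are such that both sides make sense).

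There is no real obstacle here; the only points requiring a word of care are the correct form of the polar-coordinate Jacobian $dx = s^{n-1}\,ds\,dp$ (standard, e.g. from the coarea formula applied to $x \mapsto \|x\|$, or directly) and keeping track of which spherical measure is meant on the left-hand side — it is the plain Hausdorff measure $dp$, consistent with the notation $dM$ used in Theorem \ref{thm:form-intr-vol}, so no normalization constant $\mO_{n-1}$ enters. The measurability of $\hat U$ follows from that of $U$ via the homeomorphism $(0,\infty) \times S^{n-1} \to \IR^n \setminus \{0\}$, and Tonelli's theorem justifies the interchange of integrations since the integrand $e^{-\|x\|^2/2} |f(x)|$ is nonnegative and measurable.
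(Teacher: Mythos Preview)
Your proof is correct and essentially identical to the paper's: both pass to polar coordinates (the paper phrases this via the coarea formula for the projection $\Pi\colon\IR^n\setminus\{0\}\to S^{n-1}$, whose Normal Jacobian is $\|x\|^{-(n-1)}$), then evaluate the radial integral $\int_0^\infty s^{n+k-1}e^{-s^2/2}\,ds$ by the same substitution $t=s^2/2$ to obtain $2^{\frac{n+k}{2}-1}\Gamma(\tfrac{n+k}{2})$.
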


\begin{proof}
The normal Jacobian of the projection $\Pi\colon\IR^n\setminus\{0\}\to S^{n-1}$, $\Pi(x)=\|x\|^{-1}\cdot x$, is given by $\ndet(D_x\Pi)=\|x\|^{-(n-1)}$. From the coarea formula (Lemma~\ref{lem:coarea}) we thus get
\begin{align*}
   \underset{x\in \hat{U}}{\int} e^{-\frac{\|x\|^2}{2}}\cdot f(x)\,dx & = \underset{p\in U}{\int} \int_0^\infty s^{n-1}\cdot e^{-s^2/2}\cdot f(s\cdot p) \,ds\,dp = \underset{p\in U}{\int} f(p)\,dp \cdot \int_0^\infty s^{n-1+k}\cdot e^{-s^2/2} \,ds .
\end{align*}
Substituting $t:=s^2/2$, and using the well-known formula $\int_0^\infty t^{z-1}\cdot e^{-t}\,dt=\Gamma(z)$, we obtain
\begin{align*}
   \int_0^\infty s^{n-1+k} \cdot e^{-s^2/2} \,ds
& = 2^{\frac{n+k}{2}-1}\cdot \int_0^\infty t^{\frac{n+k}{2}-1}\cdot e^{-t} \,dt
= 2^{\frac{n+k}{2}-1}\cdot \Gamma(\tfrac{n+k}{2}) \; . \qedhere 
\end{align*}
\end{proof}

\begin{proof}[Proof of Theorem~\ref{thm:main}]
Recall from~\eqref{eq:def-Phi_j(b,n,r)} that $\Phi_j(\be,n,r)=\Phi_j(\Cbn,\Wbnr)$ and in the stratification~\eqref{eq:dec-ID_n} of $K_n=\Cbn\cap S(\Herbn)$ only the strata $M_{n,1^{(r)}}$ are essential, cf.~Proposition~\ref{prop:strat-struct}. Denoting $\de(n,r) := \dim M_{n,1^{(r)}}+2 = \be r(n-r)+ d_{\be,r} + 1$, we thus obtain from~Theorem~\ref{thm:form-intr-vol}
  \[ \Phi_j(\be,n,r) = \frac{1}{\mO_{j-1}\cdot\mO_{d_{\be,n}-j-1}} \underset{A\in M_{n,1^{(r)}}}{\int}\;\underset{B\in N_A^S}{\int} \sigma_{\de(n,r)-j-1}^{(r)}(A,-B) \;dN_A^S\;dM_{n,1^{(r)}} \; , \]
where the superscript in $\sigma_{\de(n,r)-j-1}^{(r)}$ shall indicate the dependence on $M_{n,1^{(r)}}$. 

Recall that in Proposition~\ref{prop:princ-curv-M_m} we have computed the principal curvatures of $M_{n,1^{(r)}}$. Using the notation $(x\otimes y^{-1})^{\times \be} = \big(x\otimes y^{-1},\ldots,x\otimes y^{-1}\big)$ ($\be$-times) and $x\otimes y^{-1}$ defined in~\eqref{eq:xotimesy}, we obtain
\begin{equation}\label{eq:Phi_j(b,n,r)-withcurv}
  \Phi_j(\be,n,r) = \frac{1}{\mO_{j-1}\cdot\mO_{d_{\be,n}-j-1}} \underset{A\in M_{n,1^{(r)}}}{\int}\;\underset{B\in N_A^S}{\int} \sigma_{\de(n,r)-j-1}\left((\lambda^{-1}\otimes\mu)^{\times\be}\right) \,dN_A^S\,dM_{n,1^{(r)}} ,
\end{equation}
where $\lambda$ and $\mu$ denote the (positive) eigenvalues of $A$ and $-B$, respectively. Using the relation $\sigma_k(\frac{1}{x_1},\ldots,\frac{1}{x_N})=(x_1\cdots x_N)^{-N}\cdot \sigma_{N-k}(x_1,\ldots,x_N)$ and observing $\de(n,r)-j-1 = \be r(n-r) + d_{\be,r} - j$, we can rewrite the integrand via
\begin{equation}\label{eq:rewr-integr}
  \sigma_{\de(n,r)-j-1}\left((\lambda^{-1}\otimes\mu)^{\times\be}\right) = \sigma_{j-d_{\be,r}}\left((\lambda\otimes\mu^{-1})^{\times\be}\right)\cdot \frac{\prod_{i=1}^{n-r} \mu_i^{\be r}}{\prod_{i=1}^r \lambda_i^{\be(n-r)}} \; .
\end{equation}

In Proposition~\ref{prop:phi_(n,r)} we have shown that the Normal Jacobian of $\vp_n$ at $(\mu,[U])$ is given by $2^{n(n-1) \be/4}\cdot \Delta(\mu)^\be$. The normal cone of~$\Cbn$ at $A\in M_{n,1^{(r)}}$ is isometric to $\C_{\be,n-r}$, cf.~Proposition~\ref{prop:faces-SDP-cone}. Further, $M_{n-r,1^{(n-r)}}$ equals $K_{n-r} = \C_{\be,n-r} \cap S(\Her_{\be,n-r})$ up to strata of lower dimension. Taking into account the Normal Jacobian of $\varphi_{n-r}$ determined in Proposition~\ref{prop:phi_(n,r)} (note that~$n$ needs to be replaced by~$n-r$), we can transform the inner integral of~\eqref{eq:Phi_j(b,n,r)-withcurv} via the coarea formula to obtain
  \[ \underset{B\in N_A^S}{\int} f(\lambda,\mu) \,dN_A^S = \underset{P_{n-r}\times G_{n-r,n-r}}{\int} f(\lambda,\mu)\cdot 2^{(n-r)(n-r-1) \be/4}\cdot \Delta(\mu)^\be \,d(\mu,[U_2]) \; , \]
where we have abbreviated $f(\lambda,\mu)$ for the integrand~\eqref{eq:rewr-integr}.

Similarly, we may transform the outer integral of~\eqref{eq:Phi_j(b,n,r)-withcurv} by applying the coarea formula to the map~$\vp_r$. As a result we obtain
\begin{align}\label{eq:integr-transfooorm}
   \underset{A\in M_{n,1^{(r)}}}{\int} & \;\underset{B\in N_A^S}{\int} f(\lambda,\mu) \;dN_A^S\;dM_{n,1^{(r)}} = \underset{(\lambda,[U_1])\in P_r\times G_{n,r}}{\int}\;\; \underset{(\mu,[U_2])\in P_{n-r}\times G_{n-r,n-r}}{\int} f(\lambda,\mu) \nonumber
\\ & \cdot 2^{r(2n-r-1) \be/4}\cdot \Delta(\lambda)^\be \cdot \prod_{i=1}^r \lambda_i^{\be(n-r)}\cdot 2^{(n-r)(n-r-1) \be/4}\cdot \Delta(\mu)^\be \;d(\mu,[U_2]) \,d(\lambda,[U_1]) \; .
\end{align}
Note that we have
  \[ \vol G_{n,r}\cdot \vol G_{n-r,n-r} \stackrel{\eqref{eq:vol(G_(n,r))}}{=} \frac{\vol G(n)}{\mO_{\be-1}^r\cdot \vol G(n-r)} \cdot \frac{\vol G(n-r)}{\mO_{\be-1}^{n-r}} = \frac{\vol G(n)}{\mO_{\be-1}^n} \; . \]
Replacing $f(\lambda,\mu)$ again by~\eqref{eq:rewr-integr}, the integral~\eqref{eq:integr-transfooorm} simplifies to
\begin{align}
   & 2^{n(n-1)\be/4} \cdot \frac{\vol G(n)}{\mO_{\be-1}^n} \cdot \underset{P_r}{\int} \;\underset{P_{n-r}}{\int} \Delta(\lambda)^\be \cdot \Delta(\mu)^\be\cdot \sigma_{j-d_{\be,r}}\left((\lambda\otimes\mu^{-1})^{\times \be}\right) \cdot \prod_{i=1}^{n-r} \mu_i^{\be r} \;d\lambda\,d\mu
\nonumber
\\ & \stackrel{(*)}{=} \frac{(2\pi)^{n(n-1)\be/4}\cdot n!}{F_n(\be/2)}\cdot \underset{P_r}{\int} \;\underset{P_{n-r}}{\int} \Delta(\lambda)^\be \cdot \Delta(\mu)^\be\cdot f_{\be,j-d_{\be,r}}(\lambda;\mu) \;d\lambda\,d\mu \; ,
\label{eq:integr_spher}
\end{align}
where in~$(*)$ we have used~\eqref{eq:f_(b,k)(x;y)-expl} and the small computation
  \[ \frac{\vol G(n)}{\mO_{\be-1}^n} \stackrel{\eqref{eq:vol(G(n))}}{=} \frac{2^n\cdot \pi^{n(n+1)\be/4}\cdot \prod_{i=1}^n\frac{1}{\Gamma(\frac{\be i}{2})}}{(2\cdot \pi^{\be/2} / \Gamma(\frac{\be}{2}))^n} = \pi^{n(n-1)\be/4}\cdot \prod_{i=1}^n\frac{\Gamma(\frac{\be}{2})}{\Gamma(\frac{\be i}{2})} \stackrel{\eqref{eq:F_n(b/2)}}{=} \frac{\pi^{n(n-1)\be/4}\cdot n!}{F_n(\be/2)} \; . \]
The integrand in~\eqref{eq:integr_spher} is bihomogeneous in~$\lambda$ and~$\mu$. Its degree in $\lambda$ equals $\be\tbinom{r}{2} + j - d_{\be,r} = j-r$, and its degree in $\mu$ is given by $\be\tbinom{n-r}{2}+\be r(n-r) - j + d_{\be,r} = \be\tbinom{n}{2}-j+r$. Using Lemma~\ref{lem:integr-gauss-kernel} twice, we get
\begin{align*}
   \eqref{eq:integr_spher} & = \frac{(2\pi)^{n(n-1)\be/4}\cdot n!}{F_n(\be/2)}\cdot \frac{2}{2^{j/2}\cdot \Gamma(\frac{j}{2})}\cdot \frac{2}{2^{\left(\be\binom{n}{2}-j+n\right)/2}\cdot \Gamma\big(\frac{\be\binom{n}{2}+n-j}{2}\big)}
\\ & \hspace{2cm} \cdot \underset{\hat{P}_r}{\int} \; \underset{\hat{P}_{n-r}}{\int} e^{-\frac{\|\lambda\|^2+\|\mu\|^2}{2}}\cdot \Delta(\lambda)^\be \cdot \Delta(\mu)^\be\cdot f_{\be,j-d_{\be,r}}(\lambda;\mu) \;d\lambda\,d\mu
\\ & = \frac{\mO_{j-1}\cdot\mO_{d_{\be,n}-j-1}\cdot n!}{F_n(\be/2)\cdot (2\pi)^{n/2}}\cdot \underset{\hat{P}_r}{\int} \; \underset{\hat{P}_{n-r}}{\int} e^{-\frac{\|\lambda\|^2+\|\mu\|^2}{2}}\cdot \Delta(\lambda)^\be \cdot \Delta(\mu)^\be\cdot f_{\be,j-d_{\be,r}}(\lambda;\mu) \;d\lambda\,d\mu \; .
\end{align*}
The positive orthant $\IR_+^r$ decomposes into $r!$ isometric copies of $\hat{P}_r$, such that their interiors are disjoint. More precisely, the copies of $\hat{P}_r$ are parametrized by the permutations of $\{1,\ldots,r\}$, which indicate the order of the components of a vector in~$\IR_+^r$. The same applies to $\IR_+^{n-r}$ and~$\hat{P}_{n-r}$. As the Vandemonde determinant is antisymmetric, and $f_{\be,k}(\lambda;\mu)$ is symmetric both in~$\lambda$ and in~$\mu$, we finally obtain
\begin{align*}
   \Phi_j(\be,n,r) & = \frac{n!}{F_n(\be/2)\cdot (2\pi)^{n/2}}\cdot \underset{\hat{P}_r}{\int} \; \underset{\hat{P}_{n-r}}{\int} e^{-\frac{\|\lambda\|^2+\|\mu\|^2}{2}}\cdot \Delta(\lambda)^\be \cdot \Delta(\mu)^\be\cdot f_{\be,j-d_{\be,r}}(\lambda;\mu) \;d\lambda\,d\mu
\\ & = \binom{n}{r}\cdot \frac{1}{F_n(\be/2)\cdot (2\pi)^{n/2}}\cdot \underset{\nu:=(\lambda,\mu)\in\IR_+^n}{\int} e^{-\frac{\|\nu\|^2}{2}}\cdot |\Delta(\lambda)|^\be \cdot |\Delta(\mu)|^\be\cdot f_{\be,j-d_{\be,r}}(\lambda;\mu) \;d\nu
\\ & \stackrel{\eqref{eq:def-J}}{=} \binom{n}{r}\cdot \frac{J_\be(n,r,j-d_{\be,r})}{F_n(\be/2)} \; . \qedhere
\end{align*}
\end{proof}

\appendix

\section{Support measures}\label{sec:supp-meas}

The kinematic formula, as we state it in~\eqref{eq:random-proj} and which we need in the proof of Theorem~\ref{thm:CP}, follows, as well as the other formulas in Theorem~\ref{thm:kinem-form}, from a general kinematic formula for \emph{support measures} that Glasauer has proved in~\cite{Gl}, cf.~also~\cite{Gl:Summ,HS:02}. As this general formula is not easy to trace in the literature, we will give in this section a short account of support measures, and we will derive Theorem~\ref{thm:kinem-form} from Glasauer's kinematic formula for support measures.

\subsection{Definition and properties}

A major drawback of the curvature measures is that while they localize the notion of intrinsic volumes, they do not possess a duality structure. In fact, one may interpret the curvature measures as primal localizations of the intrinsic volumes, while the support measures may be thought of as primal-dual localizations of the intrinsic volumes. They generalize the curvature measures, and they do admit a duality structure. To obtain this duality we need to replace the Borel algebra $\hmB(\E)$, defined in~\eqref{eq:def-hat(B)(E)} by another $\sigma$-algebra, which we define next.

\begin{definition}
Let $\E,\E'$ be finite-dimensional euclidean spaces, and let $\mB(\E\times\E')$ denote the Borel $\sigma$-algebra on $\E\times\E'$. Then we call
\begin{equation}\label{eq:def-hat(B)(E,E')}
  \hmB(\E,\E') := \{ \M\in\mB(\E\times\E')\mid \forall \lambda,\mu>0: (\lambda,\mu) \M=\M\} \; ,
\end{equation}
where $(\lambda,\mu) \M:=\{(\lambda x,\mu v)\mid (x,v)\in \M, x\in\E,v\in\E'\}$, the \emph{biconic (Borel) $\sigma$-algebra on~$(\E,\E')$}. If $\E'=\E$ then we call $\hmB(\E,\E)$ the \emph{biconic (Borel) $\sigma$-algebra on $\E$}.
\end{definition}

It is straightforward to show that the biconic $\sigma$-algebra $\hmB(\E,\E')$ satisfies the axioms of a $\sigma$-algebra.
In the following we will only consider the case $\E'=\E$. In this case we have the natural involution $.^*\colon \E\times\E\to\E\times\E$, $(x,v)^*:=(v,x)$. This map induces an involution $\M\mapsto\M^*:=\{(v,x)\mid (x,v)\in\M\}$ on $\hmB(\E,\E)$. The group of isometries of~$\E$, i.e., the orthogonal group~$O(\dim\E)$ having fixed an orthonormal basis in~$\E$, acts on~$\hmB(\E,\E)$ via $Q\M:=\{(Qx,Qv)\mid (x,v)\in\M\}$. 

The \emph{support measures of $C$} are measures on the biconic $\sigma$-algebra on $\E$
  \[ \Theta_0(C,.),\ldots,\Theta_d(C,.)\colon \hmB(\E,\E)\to\IR_+ \; . \]
In the following we will only give the definition of the support measures for polyhedral cones in analogy to~\eqref{eq:V_j(C)-polyhdrl} and~\eqref{eq:Phi_j(C,M)-polyhdrl}. By continuity one obtains from this also a definition for support measures for general closed convex cones. More precisely, it can be shown, cf.~\cite{Gl}, that~$\Theta_j$ can be extended to the set of all closed convex cones in such a way that if the sequence $C_1,C_2,\ldots$ converges in the Hausdorff metric to~$C$, then $\lim_{k\to\infty} \Theta_j(C_k,\M)=\Theta_j(C,\M)$ for all $\M\in\hmB(\E,\E)$. We skip a definition via volumes of tubes similar to Definition~\ref{def:Phi_j(C,M),V_j(C)} as we will not need such a characterization.

Let $C\subseteq\E$ be a polyhedral cone in $d$-dimensional euclidean space~$\E$. It is well-known, cf.~for example~\cite[\S2]{Z:95} or~\cite[\S14]{G:07}, that the set of faces of a polyhedral cone forms a lattice. Furthermore, the face lattice of the dual $\breve{C}$ is the opposite lattice of the primal cone~$C$, cf.~\cite[\S2.3]{Z:95}. More precisely, if $\bar{F}\subseteq C$ is a face of~$C$ and if $F:=\relint(\bar{F})$ denotes the relative interior of~$\bar{F}$, then we denote the corresponding face of the dual cone~$\breve{C}$ and its relative interior by
  \[ \bar{F}^\diamond := \spa(\bar{F})^\bot\cap\breve{C} \;,\qquad F^\diamond := \relint(\bar{F}^\diamond) \; . \]
Let $\F_j$ denote the set of the relative interiors of the $j$-dimensional faces of $C$.
If $\Pi_C$ and $\Pi_{\breve{C}}$ denote the canonical projections on~$C$ and on~$\breve{C}$, respectively, then the support measures of~$C$ evaluated in $\M\in\hmB(\E,\E)$ are given by
\begin{equation}\label{eq:Theta_j(C,M)-polyhdrl}
   \Theta_j(C,\M) = \sum_{F\in\F_j} \; \underset{x\in\N(\E)}{\Prob}\Big[(\Pi_C(x),\Pi_{\breve{C}}(x))\in (F\times F^\diamond)\cap \M\Big] \;,\quad j=0,1,\ldots,d \; .
\end{equation}
For $j>d$ we define $\Theta_j(C,\M):=0$.

Analogous to Proposition~\ref{prop:facts-intrvol} we formulate in the following proposition some well-known facts about the support measures, which may be verified using the characterization of~$\Theta_j$ in~\eqref{eq:Theta_j(C,M)-polyhdrl}.

\begin{proposition}\label{prop:facts-suppmeas}
\begin{enumerate}
  \item \label{enum:Theta-Phi}
        The support measure $\Theta_j(C,.)$ is concentrated on~$C\times\breve{C}$, that is, $\Theta_j(C,\M)=\Theta_j(C,\M\cap (C\times\breve{C}))$. Furthermore, we have $\Theta_j(C,M\times\breve{C})=\Phi_j(C,M)$ for $M\in\hmB(\E)$.
  \item The support measures are invariant under orthogonal transformations, i.e., for $Q\in O(d)$ we have $\Theta_j(QC,Q\M) = \Theta_j(C,\M)$.
  \item \label{enum:duality-Theta}
        For the support measures of the dual cone we have $\Theta_j(C,\M) = \Theta_{d-j}(\breve{C},\M^*)$. \qed
\end{enumerate}
\end{proposition}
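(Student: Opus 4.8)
The plan is to establish all three assertions first for a \emph{polyhedral} cone $C$, directly from the explicit description~\eqref{eq:Theta_j(C,M)-polyhdrl}, and then to transfer them to an arbitrary closed convex cone by approximating $C$ in the Hausdorff metric by polyhedral cones $C_k$: since $\breve{C_k}\to\breve C$ and $QC_k\to QC$, and since $\Theta_j(C_k,\cdot)\to\Theta_j(C,\cdot)$ pointwise on $\hmB(\E,\E)$ (the continuity recalled just after~\eqref{eq:def-hat(B)(E,E')}), each identity passes to the limit. For the polyhedral case I would invoke two classical facts. The first is Moreau's decomposition $x=\Pi_C(x)+\Pi_{\breve C}(x)$ with $\Pi_C(x)\perp\Pi_{\breve C}(x)$, combined with the observation that $\Pi_C(x)=p\in\relint(\bar F)$ holds exactly when $x-p\in N_{\bar F}(C)=\bar F^\diamond$; since $\breve{\breve C}=C$, this also says that decomposing $x$ with respect to $\breve C$ returns the same pair with its two entries interchanged, so that the canonical projection onto $\breve{\breve C}$ coincides with $\Pi_C$. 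The second is the anti-isomorphism $\bar F\mapsto\bar F^\diamond$ between the face lattices of $C$ and $\breve C$, under which $\dim\bar F^\diamond=d-\dim\bar F$ and $(\bar F^\diamond)^\diamond=\bar F$, cf.~\cite[\S2.3]{Z:95}.

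For part~\eqref{enum:Theta-Phi} the concentration $\Theta_j(C,\M)=\Theta_j(C,\M\cap(C\times\breve C))$ is immediate: in every summand of~\eqref{eq:Theta_j(C,M)-polyhdrl} one has $F\subseteq C$ and $F^\diamond\subseteq\breve C$, so replacing $\M$ by $\M\cap(C\times\breve C)$ does not change $(F\times F^\diamond)\cap\M$. For the identity $\Theta_j(C,M\times\breve C)=\Phi_j(C,M)$ I would substitute $\M=M\times\breve C$ into~\eqref{eq:Theta_j(C,M)-polyhdrl}; because $F^\diamond\subseteq\breve C$, the $F$-term becomes $\Prob\big[\Pi_C(x)\in F\cap M\text{ and }\Pi_{\breve C}(x)\in F^\diamond\big]$ for Gaussian $x\in\N(\E)$. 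By the Moreau decomposition the event $\Pi_C(x)\in\relint(\bar F)$ already forces $\Pi_{\breve C}(x)\in\bar F^\diamond$, and the set on which additionally $\Pi_{\breve C}(x)$ falls into the relative boundary of $\bar F^\diamond$ is contained in a finite union of sets of the form $\relint(\bar F)+G'$, with $G'$ a proper face of $\bar F^\diamond$, each of dimension at most $\dim\bar F+\dim\bar F^\diamond-1=d-1$, hence $\N(\E)$-null. So the $F$-term equals $\Prob[\Pi_C(x)\in F\cap M]$, and summing over $\F_j$ and comparing with~\eqref{eq:Phi_j(C,M)-polyhdrl} yields the claim.

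The orthogonal-invariance claim is a change of variables: $\Pi_{QC}\circ Q=Q\circ\Pi_C$ and, using $\breve{QC}=Q\breve C$, also $\Pi_{\breve{QC}}\circ Q=Q\circ\Pi_{\breve C}$; the $j$-faces of $QC$ are precisely the $QF$ with $F\in\F_j(C)$, and $(QF)^\diamond=QF^\diamond$; and $\N(\E)$ is $O(d)$-invariant. Substituting $x=Q^{-1}y$ in each summand of~\eqref{eq:Theta_j(C,M)-polyhdrl} written for $QC$ and $Q\M$ turns it termwise into the corresponding summand of $\Theta_j(C,\M)$. For part~\eqref{enum:duality-Theta} I would write out $\Theta_{d-j}(\breve C,\M^*)$ from~\eqref{eq:Theta_j(C,M)-polyhdrl}: its faces $G$ are exactly the $\bar F^\diamond$ for $\bar F$ a $j$-face of $C$, so $\dim G=d-j$ and $G^\diamond=\bar F$, and the projection onto $\breve{\breve C}$ equals $\Pi_C$; hence the $G$-term is $\Prob\big[(\Pi_{\breve C}(x),\Pi_C(x))\in(F^\diamond\times F)\cap\M^*\big]$, which by the definition of $\M^*$ equals $\Prob\big[(\Pi_C(x),\Pi_{\breve C}(x))\in(F\times F^\diamond)\cap\M\big]$, the $F$-term of $\Theta_j(C,\M)$. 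Summing over $F\in\F_j(C)$ finishes it.

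The step I expect to be the main obstacle is the null-set argument in part~\eqref{enum:Theta-Phi}: one has to discard the locus where $\Pi_{\breve C}(x)$ lies in the relative boundary of a dual face. A related minor nuisance is that the identity $\Theta_j(C,M\times\breve C)=\Phi_j(C,M)$ has an argument set depending on $C$, which obstructs a direct Hausdorff limit; this is circumvented by using the concentration already proved to rewrite it equivalently as $\Theta_j(C,M\times\E)=\Phi_j(C,M)$, whose argument set no longer involves $C$, and then passing to the limit with the help of $\Phi_j(C_k,M)\to\Phi_j(C,M)$ from Proposition~\ref{prop:rvol(T(K,a;M))}. Everything else is bookkeeping with faces, the Moreau projections, and the rotation invariance of the Gaussian.
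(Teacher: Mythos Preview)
The paper does not give a proof of this proposition: it records the three facts as well-known, remarks that they ``may be verified using the characterization of~$\Theta_j$ in~\eqref{eq:Theta_j(C,M)-polyhdrl}'', and closes with a \qed. Your proposal carries out precisely that verification in the polyhedral case and then invokes the continuity of $\Theta_j$ (stated just before~\eqref{eq:Theta_j(C,M)-polyhdrl}) together with $\breve{C_k}\to\breve{C}$ and $QC_k\to QC$ to pass to general cones; this is correct and is exactly the route the paper intends.
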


\subsection{Lattice structures}

In this section we further examine the structure of the set of closed convex cones and of the biconic $\sigma$-algebra. This will provide a formal framework for the important duality structure, which is a specific property of spherical convex geometry.

Let the set of closed convex cones in euclidean space $\E$ be denoted by
  \[ \mC(\E) := \{C\subseteq\E\mid C \text{ closed convex cone}\} \; , \]
and let the operations $\neg,\wedge,\vee$ be defined on $\mC(\E)$ via
  \[ \neg C :=\breve{C} \;,\qquad C_1\wedge C_2 := C_1\cap C_2 \;,\qquad C_1\vee C_2 := C_1 + C_2 = \cone(C_1\cup C_2) \; , \]
where $\cone M := \{\lambda x+\mu y\mid \lambda,\mu\geq0,x,y\in M\}$ for $M\subseteq\E$. It is easily checked that the structure $(\mC(\E),\wedge,\vee,\E,\{0\},\neg)$ satisfies the axioms of an \emph{orthocomplemented lattice}, cf.~\cite{Birk:79}, where the elements $\E$ and $\{0\}$ are the neutral elements w.r.t.~the operations $\wedge$ and $\vee$, respectively. In~particular, we have the de Morgan's laws
\begin{equation}\label{eq:deMorg-C(E)}
  \dual(C_1\cap C_2) = \breve{C}_1 + \breve{C}_2 \;,\qquad \dual(C_1+C_2) = \breve{C}_1\cap \breve{C}_2 \; .
\end{equation}

The projection on linear subspaces of $\E$ fits in this framework in the following way. Let $C\in\mC(\E)$ and let $W\subseteq\E$ be a subspace of codimension~$m$. Then we have
\begin{equation}\label{eq:inters-proj}
  \dual(\breve{C}\cap W) = \neg(\neg C\wedge W) = C\vee \neg W = C+W^\bot \cong \Pi_W(C)\times \IR^m \; .
\end{equation}
So projection on linear subspaces is dual to intersection in a natural way.

The biconic $\sigma$-algebra~$\hmB(\E,\E')$ has far less structure than~$\mC(\E)$. Nevertheless, we may define operations $\wedge$ and $\vee$ on~$\hmB(\E,\E')$ in the following way. For $\M\in\hmB(\E,\E')$, $x\in\E$, $v\in\E'$, we introduce the fibers
  \[ \M_x := \Pi_1^{-1}(x)\cap \M \;,\qquad \M^v := \Pi_2^{-1}(v)\cap \M \; , \]
where $\Pi_1\colon\E\times\E'\to\E$ and $\Pi_2\colon\E\times\E'\to\E'$ denote the canonical projections onto the first and the second component, respectively.
We define the operations $\wedge$ and $\vee$ on~$\hmB(\E,\E')$ via
  \[ \M\wedge\N := \bigcup_{x\in\E} \Big(\{x\}\times (\M_x+\N_x)\Big) \;,\qquad \M\vee\N := \bigcup_{v\in\E'} \Big((\M^v+\N^v)\times \{v\}\Big) \; , \]
for $\M,\N\in\hmB(\E,\E')$.
Note that for products $\M=M_1\times M_2,\, \N=N_1\times N_2\in\hmB(\E,\E')$ we have
\begin{align}
   (M_1\times M_2)\wedge (N_1\times N_2) & = (M_1\cap N_1)\times (M_2 + N_2) \; ,
\label{eq:(M1xM2)wedge(N1xN2)=...}
\\ (M_1\times M_2)\vee (N_1\times N_2) & = (M_1 + N_1)\times (M_2\cap N_2) \; .
\label{eq:(M1xM2)vee(N1xN2)=...}
\end{align}
The following identities are readily checked for $\M,\N,\mO\in\hmB(\E,\E')$:
\begin{align*}
   \M\wedge\N & = \N\wedge\M \;, & \M\wedge(\N\wedge\mO) & = (\M\wedge\N)\wedge\mO \; ,
\\ \M\vee\N & = \N\vee\M \;, & \M\vee(\N\vee\mO) & = (\M\vee\N)\vee\mO \; .
\end{align*}
Furthermore, the elements $\E\times\{0\}$ and $\{0\}\times\E'$ are neutral w.r.t.~the operations $\wedge$ and $\vee$, i.e.,
  \[ \M\,\wedge\, (\E\times\{0\}) = \M \;,\qquad \M\,\vee\, (\{0\}\times\E') = \M \; . \]
In the case $\E=\E'$, to which we will restrict ourselves from now on, we also have an involution, which we shall denote by $\neg$, given by
  \[ \neg\M := \M^* = \{(v,x)\mid (x,v)\in\M\} \; . \]
With this involution we also have identities resembling de Morgan's laws
\begin{equation}\label{eq:deMorg-B(E,E)}
  (\M\wedge\N)^* = \M^*\vee\N^* \;,\qquad (\M\vee\N)^* = \M^*\wedge\N^* \; .
\end{equation}
Unfortunately, $\hmB(\E,\E)$ is \emph{not} a lattice with respect to the operations~$\wedge$ and~$\vee$, the reason being
  \[ \M\wedge\M = \bigcup_{x\in\E} \Big( \{x\}\times (\M_x+\M_x)\Big) \stackrel{\text{in general}}{\neq} \bigcup_{x\in\E} \Big( \{x\}\times \M_x\Big) = \M \; , \]
as the fiber $\M_x$ need not be convex.

For a cone $C\in\mC(\E)$ and a biconic set $\M\in\mB(\E,\E)$, we abbreviate $\neg(C,\M):=(\neg C,\neg \M)=(\breve{C},\M^*)$. In particular, we may rewrite the property of $\Theta_j$ stated in~Proposition~\ref{prop:facts-suppmeas}\eqref{enum:duality-Theta} in the form
\begin{equation}\label{eq:duality-Theta-nice}
  \Theta_j(C,\M) = \Theta_{d-j}(\neg(C,\M)) \; .
\end{equation}
Furthermore, we abbriviate $(C,\M)\wedge(D,\N):=(C\wedge D,\M\wedge\N)$ and $(C,\M)\vee(D,\N):=(C\vee D,\M\vee\N)$.

\subsection{Glasauer's kinematic formula for support measures}\label{sec:kinem-form-supp-meas}


We let the orthogonal group $O(d)$ operate on pairs $(\M,\N)$ of biconic sets $\M,\N\in\hmB(\E,\E)$ via $Q(\M,\N):=(Q\M,Q\N)$ for $Q\in O(d)$.
The following theorem is Satz~6.1.1/6.1.2 in~\cite{Gl} and Theorem~9/10 in~\cite{Gl:Summ}, cf.~also~\cite[\S2.4]{HS:02}.

\begin{thm}[Glasauer]\label{thm:Glas-kinem}
Let $\E$ be a $d$-dimensional euclidean space, let $C,D\in\mC(\E)$, and let $\M,\N\in\hmB(\E,\E)$ be such that $\M\subseteq C\times\breve{C}$ and $\N\subseteq D\times\breve{D}$. Then for uniformly random $Q\in O(d)$ and $1\leq j\leq d-1$,
\begin{align}
  \underset{Q}{\IE}\Big[ \Theta_j\big((C,\M)\wedge Q(D,\N)\big) \Big] & = \sum_{k=j+1}^{d-1} \Theta_k(C,\M) \cdot \Theta_{d+j-k}(D,\N) \; ,
\label{eq:Glas-kinem1}
\\\underset{Q}{\IE}\Big[ \Theta_j\big((C,\M)\vee Q(D,\N)\big) \Big] & = \sum_{k=1}^{j-1} \Theta_k(C,\M) \cdot \Theta_{j-k}(D,\N) \; .
\label{eq:Glas-kinem2}
\end{align}
\end{thm}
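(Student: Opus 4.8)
The plan is to prove~\eqref{eq:Glas-kinem1} first, reduce to the polyhedral case, and then obtain~\eqref{eq:Glas-kinem2} from it by duality. Both sides of~\eqref{eq:Glas-kinem1} are weakly continuous in the pair $(C,D)$: the support measures extend continuously under Hausdorff convergence of cones (as recalled after the definition), and for a.e.\ rotation $Q$ the wedge $(C,\M)\wedge Q(D,\N)$ is built from a ``generic'' configuration whose face data depend continuously on $C,D$. Since every closed convex cone is a Hausdorff limit of polyhedral ones, it suffices to prove the formula for polyhedral $C,D$, where we may use the explicit face expansion~\eqref{eq:Theta_j(C,M)-polyhdrl}. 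By a monotone-class argument it further suffices to treat $\M=M_1\times M_2$, $\N=N_1\times N_2$ of product form, so that $\M\wedge Q\N$ is described by~\eqref{eq:(M1xM2)wedge(N1xN2)=...}.

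Next I would carry out the face combinatorics of the random intersection. For polyhedral $C,D$ and generic $Q$, every face of $C\cap QD$ is of the form $F\cap QG$ with $F\in\F_k(C)$, $G\in\F_\ell(D)$ in transversal position $\spa F+Q\,\spa G=\E$, and then $\dim(F\cap QG)=k+\ell-d$; dually, by de Morgan~\eqref{eq:deMorg-C(E)}, $\dual(C\cap QD)=\breve C+Q\breve D$, and the dual face through $F\cap QG$ is $F^\diamond+QG^\diamond$. Substituting into~\eqref{eq:Theta_j(C,M)-polyhdrl}, the quantity $\Theta_j\big((C,\M)\wedge Q(D,\N)\big)$ becomes a sum, over pairs $(F,G)$ with $k+\ell-d=j$ (i.e.\ $\ell=d+j-k$), of probabilities $\Prob_{x\in\N(\E)}\big[(\Pi_{C\cap QD}(x),\Pi_{\breve C+Q\breve D}(x))\in (F\cap QG)\times(F^\diamond+QG^\diamond)\cap(\M\wedge Q\N)\big]$.

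The decisive step is the averaging over $Q\in O(d)$, and this is where I expect the main obstacle. Along $\relint(F\cap QG)$ the normal cone of $C\cap QD$ equals $N_F(C)+Q\,N_G(D)$, which for generic $Q$ lies in the direct sum $(\spa F)^\bot\oplus(Q\,\spa G)^\bot$; consequently the projections $\Pi_{C\cap QD}$ and $\Pi_{\breve C+Q\breve D}$ split according to the decomposition of $\E$ into $\spa F$, $Q\,\spa G$, and their common orthogonal complement. Using the rotation invariance of $\N(\E)$, taking $\IE_Q$ therefore reduces the whole identity to a purely linear kinematic formula for the subspaces $\spa F$ and $Q\,\spa G$, weighted by the local angles of $F$ in $C$ and of $G$ in $D$: one performs a coarea change of variables on $O(d)$ (Lemma~\ref{lem:coarea}), and it is exactly this integral that produces the index shift $\ell=d+j-k$ and the summation range $j+1\le k\le d-1$. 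Bookkeeping all normalizing constants so that the factored contributions assemble precisely into $\Theta_k(C,\M)\cdot\Theta_{d+j-k}(D,\N)$ is the technical heart of the argument (and is essentially the content of Glasauer's work cited).

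Finally, I would deduce~\eqref{eq:Glas-kinem2} from~\eqref{eq:Glas-kinem1} by applying the involution $\neg$. Since $\neg$ commutes with the $O(d)$-action and, by~\eqref{eq:deMorg-C(E)} and~\eqref{eq:deMorg-B(E,E)}, one has $\neg\big((C,\M)\wedge Q(D,\N)\big)=\neg(C,\M)\vee Q\,\neg(D,\N)$, applying $\Theta_{d-j}$ and invoking the duality~\eqref{eq:duality-Theta-nice} from Proposition~\ref{prop:facts-suppmeas} turns~\eqref{eq:Glas-kinem1} into~\eqref{eq:Glas-kinem2} after reindexing $k\mapsto d-k$. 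Alternatively, the $\vee$-case can be checked directly by running the same face-combinatorics/averaging computation on the dual side, using~\eqref{eq:(M1xM2)vee(N1xN2)=...} and the face lattice of $\breve C+Q\breve D$; the duality route is shorter and avoids repeating the integral.
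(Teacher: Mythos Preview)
The paper does not prove this theorem at all: it is stated as a citation of Glasauer's work (Satz~6.1.1/6.1.2 in~\cite{Gl}, Theorem~9/10 in~\cite{Gl:Summ}). The only argument the paper supplies is the Remark immediately following the statement, which shows that~\eqref{eq:Glas-kinem1} and~\eqref{eq:Glas-kinem2} are equivalent via the duality~\eqref{eq:duality-Theta-nice} and the de~Morgan identities~\eqref{eq:deMorg-C(E)},~\eqref{eq:deMorg-B(E,E)}. Your final paragraph reproduces exactly that duality derivation, so on the one part the paper actually addresses, your approach coincides with it.

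Your first three paragraphs go beyond what the paper does: you outline a strategy for proving~\eqref{eq:Glas-kinem1} itself (polyhedral approximation, monotone-class reduction to product sets, face combinatorics of $C\cap QD$, and an $O(d)$-averaging via coarea). This is a plausible high-level description of how Glasauer's proof proceeds, and you correctly flag the averaging/normalization step as the technical core that you have not carried out. Since the paper offers no proof of~\eqref{eq:Glas-kinem1} to compare against, there is nothing to adjudicate here; just be aware that what you have written is an outline rather than a proof, and that the genuine work (the integral-geometric averaging producing the factorization $\Theta_k(C,\M)\cdot\Theta_{d+j-k}(D,\N)$) lives entirely in the cited reference.
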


\begin{remark}
The two formulas~\eqref{eq:Glas-kinem1} and~\eqref{eq:Glas-kinem2} are in fact equivalent, as one follows from the other via duality. For example, assuming~\eqref{eq:Glas-kinem1}, we obtain
\begin{align*}
   & \underset{Q}{\IE}\Big[ \Theta_j\big((C,\M)\vee Q(D,\N)\big) \Big] \stackrel{\eqref{eq:duality-Theta-nice}}{=} \underset{Q}{\IE}\Big[ \Theta_{d-j}\big(\neg((C,\M)\vee Q(D,\N))\big) \Big]
\\ & \hspace{-3mm}\stackrel{\eqref{eq:deMorg-C(E)},\eqref{eq:deMorg-B(E,E)}}{=} \underset{Q}{\IE}\Big[ \Theta_{d-j}\big(\neg (C,\M)\wedge Q(\neg (D,\N))\big) \Big] \stackrel{\eqref{eq:Glas-kinem1}}{=} \sum_{k=d-j+1}^{d-1} \Theta_k(\neg (C,\M)) \cdot \Theta_{2d-j-k}(\neg (D,\N))
\\ & \stackrel{\eqref{eq:duality-Theta-nice}}{=} \sum_{k=d-j+1}^{d-1} \Theta_{d-k}(C,\M) \cdot \Theta_{k+j-d}(D,\N) \stackrel{[\ell:=d-k]}{=} \sum_{\ell=1}^{j-1} \Theta_\ell(C,\M) \cdot \Theta_{j-\ell}(D,\N) \; .
\end{align*}
\end{remark}

We finish this section by deriving Theorem~\ref{thm:kinem-form} from Theorem~\ref{thm:Glas-kinem}.

\begin{proof}[Proof of Theorem~\ref{thm:kinem-form}]
Let $C\subseteq\IR^d$ be a closed convex cone, and let $W\subseteq\IR^d$ be a uniformly random subspace of codimension~$m\in\{1,\ldots,d-1\}$. Then for $M\in\hmB(\IR^d)$ with $M\subseteq C$ and $1\leq j\leq d-m$ we have
\begin{align*}
   \Phi_j(C\cap W & ,M\cap W) \stackrel{\text{Prop.~\ref{prop:facts-suppmeas}\eqref{enum:Theta-Phi}}}{=} \Theta_j(C\cap W,(M\cap W)\times\dual(C\cap W))
\\ & \stackrel{\eqref{eq:deMorg-C(E)}}{=} \Theta_j(C\cap W,(M\cap W) \times (\breve{C} + W^\bot))
\stackrel{\eqref{eq:(M1xM2)wedge(N1xN2)=...}}{=} \Theta_j(C\cap W,(M\times\breve{C})\wedge (W\times W^\bot))
\\ & = \Theta_j( (C,M\times\breve{C})\wedge (W,W\times W^\bot)) \; .
\end{align*}
Writing $W=QW_0$ for some fixed linear subspace $W_0\subseteq\IR^d$ of codimension~$m$ and with $Q\in O(d)$ uniformly at random, we thus obtain
\begin{align*}
   \underset{W}{\IE}\big[\Phi_j(C\cap W,M)\big] & = \underset{Q}{\IE}\big[\Theta_j( (C,M\times \breve{C})\wedge Q(W_0,W_0\times W_0^\bot))\big]
\\ & \stackrel{\eqref{eq:Glas-kinem1}}{=} \sum_{k=j+1}^{d-1} \underbrace{\Theta_k(C,M\times \breve{C})}_{=\Phi_k(C,M)}\cdot \underbrace{\Theta_{d+j-k}(W_0,W_0\times W_0^\bot)}_{=V_{d+j-k}(\IR^{d-m})\stackrel{\text{Prop.~\ref{prop:facts-intrvol}\eqref{enum:ivol-sum=1}}}{=}\delta_{k,j+m}} = \Phi_{j+m}(C,M) \; ,
\end{align*}
which is~\eqref{eq:random-inters}. This implies~\eqref{eq:random-inters-2} via
\begin{align*}
   \underset{W}{\IE}\big[V_0(C\cap W)\big] & \stackrel{\text{Prop.~\ref{prop:facts-intrvol}\eqref{enum:ivol-sum=1}}}{=} \underset{W}{\IE}\bigg[1-\sum_{j=1}^{d-m}V_j(C\cap W)\bigg] = 1-\sum_{j=1}^{d-m}\underset{W}{\IE}\big[V_j(C\cap W)\big]
\\ & \stackrel{\eqref{eq:random-inters}}{=} 1-\sum_{j=1}^{d-m} V_{j+m}(C) \stackrel{\text{Prop.~\ref{prop:facts-intrvol}\eqref{enum:ivol-sum=1}}}{=} V_0(C)+V_1(C)+\ldots+V_m(C) \; .
\end{align*}

As for the projection formula, we have
\begin{align*}
   & \Phi_j(\Pi_W(C),\Pi_W(M)) \stackrel{\text{Prop.~\ref{prop:facts-intrvol}\eqref{enum:Phi_j(Pi_W(C),Pi_W(M))=...}}}{=} \Phi_{j+m}(C+W^\bot,M+W^\bot)
\\ & \stackrel{\text{Prop.~\ref{prop:facts-suppmeas}\eqref{enum:Theta-Phi}}}{=} \Theta_{j+m}(C+W^\bot,(M+W^\bot)\times \dual(C+W^\bot))
\\ & \stackrel{\eqref{eq:deMorg-C(E)}}{=} \Theta_{j+m}(C+W^\bot,(M+W^\bot)\times (\breve{C}\cap W))
\\ & \stackrel{\eqref{eq:(M1xM2)vee(N1xN2)=...}}{=} \Theta_{j+m}( C+ W^\bot,(M\times\breve{C})\vee (W^\bot,W^\bot\times W)) = \Theta_{j+m}( (C,M\times\breve{C})\vee (W^\bot,W^\bot\times W)) \; .
\end{align*}
It thus follows that
\begin{align*}
   \underset{W}{\IE}\big[\Phi_j(\Pi_W(C),\Pi_W(M))\big] & = \underset{Q}{\IE}\big[\Theta_{j+m}( (C,M\times\breve{C})\vee Q(W_0^\bot,W_0^\bot\times W_0))\big]
\\ & \stackrel{\eqref{eq:Glas-kinem2}}{=} \sum_{k=1}^{j+m-1} \underbrace{\Theta_k(C,M\times \breve{C})}_{=\Phi_k(C,M)}\cdot \underbrace{\Theta_{j+m-k}(W_0^\bot,W_0^\bot\times W_0)}_{=V_{j+m-k}(\IR^m)=\delta_{k,j}} = \Phi_j(C,M) \; ,
\end{align*}
which is~\eqref{eq:random-proj}. Analogously to the above computation we obtain~\eqref{eq:random-proj-2}.
\end{proof}


\bibliography{symmetric}

\end{document}